\def\NZQ{\mathbb}               
\def\NN{{\NZQ N}}
\def\QQ{{\NZQ Q}}
\def\ZZ{{\NZQ Z}}
\def\RR{{\NZQ R}}
\newtheorem{Theorem}{Theorem}[section]
\newtheorem{Lemma}[Theorem]{Lemma}
\newtheorem{Proposition}[Theorem]{Proposition}
\newtheorem{Example}[Theorem]{Example}
\newtheorem{Question}[Theorem]{Question}
\let\epsilon\varepsilon
\let\phi=\varphi
\let\kappa=\varkappa
\begin{document}

\title{Finite generation of extensions of associated graded rings along a valuation}
\author{Steven Dale Cutkosky}
\thanks{partially supported by NSF}

\address{Steven Dale Cutkosky, Department of Mathematics,
University of Missouri, Columbia, MO 65211, USA}
\email{cutkoskys@missouri.edu}

\keywords{Associated graded ring along a valuation, ramification, finite generation, defect}
\subjclass{14B05, 14B22, 13B10, 11S15}

\begin{abstract}
In this paper we consider the  question of when the associated graded ring along a valuation, ${\rm gr}_{\nu^*}(S)$, is a finite ${\rm gr}_{\nu^*}(R)$-module, where $S$ is a normal local ring which lies over a normal local ring $R$ and $\nu^*$ is a valuation of the quotient field of $S$ which dominates $S$. 

We begin by discussing some examples and results allowing us to refine the conditions under which finite generation can hold. We must impose the condition that the extension of valuations is {\it defectless} and perform  a birational extension of $R$ along the valuation to obtain finite generation (replacing $S$ with the local ring of the quotient field  of $S$ determined by the valuation which lies over the extension of $R$). With these assumptions, we have that finite generation holds, when $R$ is a two dimensional excellent local ring. 

Our main result (in Theorem \ref{ThmAJV})  is to show that for an arbitrary valuation in an algebraic function field over an arbitrary  field of characteristic zero, after a birational extension along the valuation, we always have finite generation (all finite extensions of valued fields are defectless in characterisitic zero). This generalizes an earlier result,  in \cite{C11}, showing that finite generation holds (after a birational extension) with the additional assumptions that $\nu$ has rank 1 and has an algebraically closed residue field.   There are essential difficulties in removing these assumptions, which are addressed in the proof in this paper. 

  We    obtain general results for unramified extensions of excellent local rings  in Proposition \ref{PropUnR}, showing that after blowing up, the extension of associated graded rings is finitely generated of  an extremely  simple form.  This proposition plays an essential role  in  the proof of Theorem \ref{ThmAJV}.
\end{abstract}

\maketitle

\section{Introduction}

Suppose that $K$ is a field. Associated to a valuation $\nu$ of $K$  is a value group $\Phi_{\nu}$ and  a valuation ring $V_{\nu}$ with maximal ideal $m_{\nu}$. Let $R$ be a local domain with quotient field $K$. We say that $\nu$ dominates $R$ if $R\subset V_{\nu}$ and $m_{\nu}\cap R=m_R$ where $m_R$ is the maximal ideal of $R$. We have an associated semigroup $S^R(\nu)=\{\nu(f)\mid 0\ne  f\in R\}$, as well as the associated graded ring of $R$ along $\nu$
\begin{equation}\label{eqN31}
{\rm gr}_{\nu}(R)=\bigoplus_{\gamma\in \Phi_{\nu}}\mathcal P_{\gamma}(R)/\mathcal P^+_{\gamma}(R)=\bigoplus_{\gamma\in S^{R}(\nu)}\mathcal P_{\gamma}(R)/\mathcal P^+_{\gamma}(R)
\end{equation}
which is defined by Teissier in \cite{T1}. Here 
$$
\mathcal P_{\gamma}(R)=\{f\in R\mid \nu(f)\ge \gamma\}\mbox{ and }\mathcal P^+_{\gamma}(R)=\{f\in R\mid \nu(f)> \gamma\}.
$$ 
This ring plays an important role in local uniformization of singularities (\cite{T1} and \cite{T2}).
The ring ${\rm gr}_{\nu}(R)$ is a domain, but it is often not Noetherian, even when $R$ is.  In fact, a necessary condition for ${\rm gr}_{\nu}(R)$ to be Noetherian is that $\Phi_{\nu}$ be a finitely generated group. 

Some recent papers on valuation theory and  local uniformization  in positive characteristic are: Cossart and Piltant \cite{CP1} and \cite{CP2}, Ghezzi, H\'a and Kashcheyeva \cite{GHK}, Ghezzi and  Kashcheyeva \cite{GK}, Herrera Govantes, Olalla Acosta, Spivakovsky and Teissier \cite{GAST}, Knaf and Kuhlmann \cite{KK},  Kuhlmann \cite{Ku1}, \cite{Ku2} and \cite{Ku3},  Novacoski and Spivakovsky \cite{NS}, Spivakovsky \cite{Sp}, Teissier \cite{T1} and  \cite{T2}, Temkin \cite{Te} and Vaqui\'e \cite{Vaq}.
Some recent papers on resolution of singularities in positive characteristic are:  Benito and Villamayor \cite{BeV},  Bravo and Villamayor \cite{BrV},  Cossart, Jannsen and Saito \cite{CJS},  Hauser \cite{Ha} and Hironaka \cite{H1}.

Suppose that $K\rightarrow K^*$ is a finite extension of  fields and $\nu^*$ is a valuation which is an extension of $\nu$ to $K^*$. We have the classical indices
$$
e(\nu^*/\nu)=[\Phi_{\nu^*}:\Phi_{\nu}]\mbox{ and }f(\nu^*/\nu)=[V_{\nu^*}/m_{\nu^*}:V_{\nu}/m_{\nu}]
$$
as well as the defect  $\delta(\nu^*/\nu)$ of the extension. Ramification of valuations and the defect are discussed in Chapter VI of \cite{ZS2}, \cite{E} and Kuhlmann's papers \cite{Ku1} and \cite{Ku3}. A survey is given in Section 7.1 of \cite{CP}. By Ostrowski's lemma, if $\nu^*$ is the unique extension of $\nu$ to $K^*$, we have that
\begin{equation}\label{int3}
[K^*:K]=e(\nu^*/\nu)f(\nu^*/\nu)p^{\delta(\nu^*/\nu)}
\end{equation}
where $p$ is the characteristic of the residue field $V_{\nu}/m_{\nu}$. From this formula, the defect can be computed using Galois theory in an arbitrary finite extension. 
If  $V_{\nu}/m_{\nu}$ has characteristic 0, then $\delta(\nu^*/\nu)=0$ and  $p^{\delta(\nu^*/\nu)}=1$, so there is no defect. Further, if $\Phi_{\nu}=\ZZ$ and $K^*$ is separable over $K$ then there is no defect. 

Now suppose that $K\rightarrow K^*$ is a finite separable  field extension and $\nu^*$ is a valuation of $K^*$ with restriction $\nu$ to $K$. 
Suppose that $R$ and $S$ are normal, excellent local rings with  quotient field  ${\rm QF}(R)=K$ of $R$ and quotient field  ${\rm QF}(S)=K^*$ of $S$. Suppose that $S$ dominates $R$ and $\nu^*$ dominates $S$. We have the following basic question:

\begin{Question}\label{Q1}
When is ${\rm gr}_{\nu^*}(S)$ a finitely generated ${\rm gr}_{\nu}(R)$-algebra?
\end{Question}

We will see that Question \ref{Q1} has a good general answer, but we must refine the question, which we will do  by consideration of examples.

In Corollary 6 \cite{C13},  it is shown that if $K=K^*$, $R$ and $S$ are two dimensional excellent regular local rings with $R\ne S$, and $\nu$ is not discrete, then ${\rm gr}_{\nu}(S)$ is not a finitely generated ${\rm gr}_{\nu}(R)$-algebra.  Thus to obtain a good answer to Question \ref{Q1} for general valuations, we must restrict to the case that $S$ lies over $R$
($S$ is the local ring which is the localization of the integral closure of $R$ in $K^*$ that is dominated by $\nu^*$).

An algebraic local ring $A$ in an algebraic function field $L$ over a field $k$ is a local ring which is a localization of a finite type $k$-algebra such that ${\rm QF}(A)=L$.
We always assume that a valuation $\tilde\nu$ of an algebraic function field $L$ over a field $k$ is a $k$-valuation; that is, $\tilde\nu|(k\setminus 0)=0$.
Now it can happen, even when $S$ lies over $R$ and $R$ and $S$ are algebraic regular local rings in  two dimensional  algebraic function fields over an arbitrary field, that ${\rm gr}_{\nu^*}(S)$ is not a finitely generated ${\rm gr}_{\nu}(R)$-algebra (Example 9.4 \cite{CV1} and Example 1.2 \cite{C11}). 

However, if $R$ has dimension two and contains a field $k$ of characteristic zero, then there exists a regular local ring $R_1$ which dominates $R$ and is dominated by $\nu$ such that if $S_1$ is the normal local ring of $K^*$ which lies over $R_1$ and is dominated by $\nu^*$, then ${\rm gr}_{\nu^*}(S_1)$ is a finite ${\rm gr}_{\nu}(R_1)$-module.  Further, this property is stable under further blowing up (\cite{GHK}, \cite{CV2} and \cite{C12}).

Suppose that $S$ lies over $R$. Then it may be that ${\rm gr}_{\nu^*}(S)$ is not  integral over ${\rm gr}_{\nu}(R)$ (Example 1.1 \cite{C11}),  even when $K$ and $K^*$ are algebraic function fields over an arbitrary field $k$. However, after some blowing up $R\rightarrow R_1$ along $\nu$, we obtain that ${\rm gr}_{\nu^*}(S_1)$ is integral over ${\rm gr}_{\nu}(R_1)$, where $S_1$ is the normal algebraic local ring of $K^*$ which is dominated by $\nu^*$ and lies over $R_1$ (Theorem 1.4 \cite{C11}). This explains the finiteness of ${\rm gr}_{\nu^*}(S_1)$ over ${\rm gr}_{\nu}(R_1)$ in the two dimensional result cited above from \cite{GHK}, \cite{CV2}, \cite{C12}.

Suppose that $R_1$ is a local domain with quotient field $K$. We will say that $R_1$ is a birational extension of $R$ if $R_1$ dominates $R$ and $R_1$ is a localization of a finite type $R$-algebra.  This leads us to a refinement of Question \ref{Q1}:

\begin{Question}\label{Q2}
Does there exist a birational extension $R\rightarrow R_1$ such that $R_1$ is normal and $\nu$ dominates $R_1$ such that ${\rm gr}_{\nu^*}(S_1)$ is a finitely generated ${\rm gr}_{\nu}(R_1)$-module, where $S_1$ is the normal local ring with quotient field $K^*$ which lies over $R_1$ and is dominated by $\nu^*$?
\end{Question}

In Section 7.11 \cite{CP} (recalled in Example 1.3 \cite{C11})  an example is given in a separable extension $K\rightarrow K^*$ of two dimensional algebraic function fields over an algebraically closed field $k$ of positive characteristic $p$  such that ${\rm gr}_{\nu^*}(S_1)$ is not a finitely generated ${\rm gr}_{\nu}(R_1)$-algebra for all regular local rings $R_1$ birationally dominating $R$ which are dominated by $\nu$. This example has positive defect $\delta(\nu^*/\nu)>0$. This example is not sporadic, but illustrates a general principal in dimension two which we now state. 

\begin{Theorem}(Theorem 0.1 \cite{C13})
Suppose that $R$ is a 2 dimensional excellent local domain with quotient field $K$. Further suppose that $K^*$ is a finite separable extension of $K$ and $S$ is a 2 dimensional local domain with quotient field
$K^*$ such that  $S$ dominates $R$. 
Suppose that $\nu^*$ is a valuation of $K^*$ such that 
 $\nu^*$ dominates $S$. Let $\nu$ be the restriction of $\nu^*$ to $K$.  Then the extension $(K,\nu)\rightarrow (K^*,\nu^*)$ is without defect if and only if there exist regular local rings $R_1$ and $S_1$ such that
 $R_1$ is a local ring of a blow up of $R$, $S_1$ is a local ring of a blowup of $S$, $\nu^*$ dominates $S_1$, $S_1$ dominates $R_1$ and ${\rm gr}_{\nu^*}(S_1)$ is a finitely generated ${\rm gr}_{\nu}(R_1)$-algebra.
 \end{Theorem}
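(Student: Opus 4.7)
The equivalence breaks into two implications which require rather different techniques. The hard direction is ``defectless implies finite generation after blow-up''; the reverse is an extraction of a numerical identity from the finite-generation hypothesis.

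For the direction ``finite generation implies defectless'', I would begin with regular local rings $R_1 \subset S_1$ obtained by blowing up $R$ and $S$, with ${\rm gr}_{\nu^*}(S_1)$ a finitely generated ${\rm gr}_{\nu}(R_1)$-algebra. After reducing to the case of a unique extension of $\nu$ to $K^*$ (by replacing $K$ with a Henselization inside $K^*$ and treating each extension separately), I would compute the generic degree of the graded-field extension obtained by inverting all nonzero homogeneous elements of ${\rm gr}_{\nu}(R_1)$. This degree is $e(\nu^*/\nu)f(\nu^*/\nu)$ by a direct count: the semigroup index contributes $e$ and the residue-field extension contributes $f$ in each fixed degree. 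Combining the finite-generation hypothesis with the integrality established in Theorem 1.4 of \cite{C11} (after possibly further blow-ups along $\nu$), one obtains that ${\rm gr}_{\nu^*}(S_1)$ has generic rank exactly $[K^*:K]$ over ${\rm gr}_{\nu}(R_1)$. Comparing with Ostrowski's formula (\ref{int3}) forces $p^{\delta(\nu^*/\nu)} = 1$, so $\delta(\nu^*/\nu) = 0$.

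For the direction ``defectless implies finite generation after blow-up'', I would first use local uniformization for $2$-dimensional excellent local rings to blow up $R$ and $S$ into regular local rings still dominated by $\nu^*$, and such that $S_1$ still lies over $R_1$. The main engine is the theory of generating sequences of key polynomials for a valuation in a $2$-dimensional regular local ring (Spivakovsky; Favre-Jonsson): there is a (possibly infinite) sequence $\{P_i\}\subset R_1$ whose initial forms generate ${\rm gr}_{\nu}(R_1)$ as an algebra over the residue field, and likewise a sequence $\{Q_j\}\subset S_1$ for ${\rm gr}_{\nu^*}(S_1)$. I would analyze how each $P_i$ factors in $S_1$, expressing the $Q_j$ in terms of the $P_i$ together with ``new'' key polynomials accounting for the extension. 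The defectless hypothesis is precisely what guarantees, via Ostrowski's formula (\ref{int3}), that only finitely many genuinely new key polynomials appear; otherwise each defect step in the tower $(K,\nu)\subset(K^*,\nu^*)$ would produce an infinite proliferation of new key polynomials in $S_1$ not absorbed by $R_1$. A further birational blow-up $R_1 \to R_2$ along $\nu$ is then used to absorb these finitely many new generators into the base, yielding that ${\rm gr}_{\nu^*}(S_2)$ is a finitely generated ${\rm gr}_{\nu}(R_2)$-algebra.

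The principal obstacle is in the second direction: translating the ``no defect'' condition into a concrete termination of the key-polynomial construction in $S_1$. When $\Phi_{\nu}$ is non-discrete, or when the residue field extension $V_{\nu^*}/m_{\nu^*}$ over $V_{\nu}/m_{\nu}$ is inseparable, or when $\nu$ has rational rank $2$ with an exotic semigroup, key polynomials in $S_1$ of arbitrarily high degree could a priori appear; only the defectless hypothesis, via the multiplicativity in Ostrowski's formula, prevents this. One must then carefully pair this termination with a sequence of birational blow-ups along $\nu$ so that the finitely many remaining ``new'' generators become integral over, and ultimately generated by, ${\rm gr}_{\nu}(R_2)$, in a manner respecting the $\Phi_{\nu^*}$-grading.
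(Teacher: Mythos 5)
First, a point of reference: this theorem is not proved in the present paper at all; it is quoted as Theorem 0.1 of \cite{C13}, so the only proof to compare against is the one in \cite{C13}, which runs through the ramification theory of two-dimensional regular local rings along a valuation (the stable monomial forms of \cite{CP}, refined in \cite{C12} and \cite{CV2}) together with generating sequences, and which handles the direction ``finite generation $\Rightarrow$ defectless'' in contrapositive form: positive defect forces, for \emph{every} pair $R_1\rightarrow S_1$ as in the statement, that ${\rm gr}_{\nu^*}(S_1)$ is not a finitely generated ${\rm gr}_{\nu}(R_1)$-algebra.

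Your proposal has a genuine gap in exactly that direction. You claim that finite generation, combined with the integrality of Theorem 1.4 of \cite{C11}, forces the ``generic rank'' of ${\rm gr}_{\nu^*}(S_1)$ over ${\rm gr}_{\nu}(R_1)$ to be $[K^*:K]$, and you then compare with Ostrowski's formula (\ref{int3}) to kill the defect. But the degree of the extension of graded quotient fields is \emph{always} $e(\nu^*/\nu)f(\nu^*/\nu)$ (Proposition 3.3 of \cite{C11}), with no finite generation hypothesis at all, and integrality plus finite generation only upgrades this to module-finiteness whose generic rank is again $ef$; nothing in the hypothesis ties this number to $[K^*:K]$. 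So the comparison with Ostrowski never gets started, and no soft argument of this shape can work: the defect is invisible at the level of graded fraction fields, which is precisely why the theorem is delicate. (The proposed reduction ``replace $K$ by a Henselization inside $K^*$'' is also problematic, since one leaves the excellent, essentially-finite-type setting in which $R_1$ and $S_1$ live.) In the other direction you correctly identify generating sequences as the engine --- this is indeed how \cite{GHK}, \cite{GK}, \cite{CV2}, \cite{C12} and then \cite{C13} proceed --- but the crucial claim, that defect zero makes only finitely many ``new'' key polynomials appear and that a further blow-up absorbs them into the base, is asserted by appeal to ``multiplicativity in Ostrowski's formula'' rather than proved; Ostrowski's lemma is a numerical identity in $e$, $f$ and $\delta$ and by itself gives no control over the key-polynomial construction. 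Supplying that control is the content of the stable-form analysis of \cite{CP} and \cite{C13}, which your outline presupposes rather than provides.
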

 
 Thus to obtain a good answer to Question \ref{Q2} for general valuations, we must assume that $K\rightarrow K^*$ is defectless ($\delta(\nu^*/\nu)=0$). This leads us to our final formulation of the question. 
 
 \begin{Question}\label{Q3}
 Suppose that $K\rightarrow K^*$ is defectless. Does there exist a birational extension $R\rightarrow R_1$ such that $R_1$ is normal and $\nu$ dominates $R_1$, and ${\rm gr}_{\nu^*}(S_1)$ is a finitely generated ${\rm gr}_{\nu}(R_1)$-module, where $S_1$ is the normal local ring with quotient field $K^*$ which lies over $R_1$ and is dominated by $\nu^*$?
 \end{Question}

Question \ref{Q3} has a positive answer when $R$ is assumed to have dimension two, as follows from results of \cite{GHK}, \cite{GK}, \cite{CP} and \cite{C12}.
In this paper we give a positive answer to Question \ref{Q3} in algebraic function fields of arbitrary dimension over an arbitrary field of characteristic zero. 
We prove the following theorem in Section \ref{SecAJ}.

\begin{Theorem}\label{ThmAJV} Suppose that $K$ is an algebraic function field over a field $k$ of characteristic zero and $K^*$ is a finite extension of $K$. Suppose that $\nu^*$ is a $k$-valuation of $K^*$ ($\nu^*(k\setminus 0)=0$). Let $\nu$ be the restriction of $\nu^*$ to $K$, and suppose that $\tilde R$ is an algebraic local ring of $K$ which is dominated by $\nu$. Then there exists an algebraic regular local ring $R$ of $K$ which dominates $\tilde R$ and is dominated by $\nu$ such that    if $S$ is the local ring of the integral closure of $R$ in $K^*$ which is dominated by  $\nu^*$,  then 
 ${\rm gr}_{\nu^*}(S)$ is a free ${\rm gr}_{\nu}(R)$-module of finite rank $ef$ where $e=[\Phi_{\nu^*}/\Phi_{\nu}]$ and $f= [V_{\nu^*}/m_{\nu^*}:V_{\nu}/m_{\nu}]$.
\end{Theorem}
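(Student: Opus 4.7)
The plan is to combine classical ramification theory in characteristic zero with Proposition \ref{PropUnR}. After a birational extension of $\tilde R$ along $\nu$, I will reduce to a situation in which $K^*/K$ decomposes into an unramified stage (handled by Proposition \ref{PropUnR}) and a tame totally ramified Kummer stage (admitting an explicit graded basis), and then assemble an $ef$-element free basis of ${\rm gr}_{\nu^*}(S)$ over ${\rm gr}_\nu(R)$.

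First, by a finite sequence of birational extensions of $\tilde R$ along $\nu$---using the classical fact, valid for algebraic function fields, that the number of extensions of $\nu$ to a Galois closure of $K^*/K$ dominating the corresponding integral closure is strictly decreasing under well-chosen blow-ups and eventually reaches $1$---I may assume that $\nu^*$ is the unique extension of $\nu$ to $K^*$ and that a chosen Galois closure $L$ of $K^*/K$ has a unique extension $\nu^L$ of $\nu$; by Zariski's local uniformization in characteristic zero I may also assume $\tilde R$ is regular. Setting $G={\rm Gal}(L/K)$, $I\subset G$ the inertia group, $F=L^I$ the inertia field, and $K_1=K^*\cap F$, classical ramification theory (cf.\ Chapter VI of \cite{ZS2}) yields the tower $K\subset K_1\subset K^*$ in which $K_1/K$ is unramified of degree $f(\nu^*/\nu)$ and $K^*/K_1$ is totally tamely ramified of degree $e(\nu^*/\nu)$ (there is no wild ramification since the residue characteristic is zero).

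For the unramified stage $K_1/K$, Proposition \ref{PropUnR} gives, after a further birational extension of $\tilde R$ along $\nu$, a regular local ring $R_1$ of $K$ dominating $\tilde R$, with integral closure localization $S_1\subset K_1$ dominated by $\nu^*|_{K_1}$, such that ${\rm gr}_{\nu^*|_{K_1}}(S_1)$ is free over ${\rm gr}_\nu(R_1)$ of rank $f(\nu^*/\nu)$, with an explicit basis $\tau_1,\dots,\tau_f\in S_1$ lifting a basis of the residue-field extension. For the tame totally ramified stage $K^*/K_1$, after possibly another unramified birational step to adjoin the requisite roots of unity (itself governed by Proposition \ref{PropUnR}), the extension becomes Kummer, generated by elements $y_1,\dots,y_e\in S$ whose $\nu^*$-values form a complete set of representatives of $\Phi_{\nu^*}/\Phi_\nu$. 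I then verify that the $ef$ products $\tau_i y_j$ form a free ${\rm gr}_\nu(R_1)$-basis of ${\rm gr}_{\nu^*}(S)$: spanning combines the value-group and residue-field decompositions, and freeness follows because any nontrivial ${\rm gr}_\nu(R_1)$-linear relation among the $\tau_i y_j$ must occur within a single coset of $\Phi_\nu\subset\Phi_{\nu^*}$, reducing to a relation among the $\tau_i$ that is ruled out by the unramified stage.

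The main obstacle is the unramified stage, which is precisely what Proposition \ref{PropUnR} is designed to address. In \cite{C11}, the hypothesis that $V_\nu/m_\nu$ is algebraically closed trivializes this step, since any residue-field basis is already represented by constants inside $R$. Here $V_{\nu^*}/m_{\nu^*}$ can extend nontrivially beyond the residue field of any initial $S$, and realizing $\tau_1,\dots,\tau_f$ inside $S_1$ with classes spanning the residue-field extension over the residue field of $R_1$ requires careful use of the henselian behavior of the completion along $\nu$. The higher rank of $\nu^*$ is accommodated by the compatibility of the associated graded construction with composing $\nu^*$ into rank-$1$ pieces, so the rank-$1$ arguments---which crucially use defectlessness of characteristic zero at each composition step---suffice once set up in this generality.
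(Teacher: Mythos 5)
Your reduction via ramification theory (splitting/inertia fields inside a Galois closure, with the residue-field part handled by Proposition \ref{PropUnR}) is in the same spirit as the paper's reduction, although the mechanism you cite is off: blowing up never changes the set of extensions of $\nu$ to the Galois closure; what one can arrange (Lemmas \ref{Crit}, \ref{Lemma8}, \ref{Lemma2}) is that only the chosen extension dominates the chosen local ring, and the actual reduction to ``unique extension with equal residue fields'' is achieved by replacing $K\rightarrow K^*$ with $K'=K^i\cap (K^*)^s\rightarrow (K^*)^s$, not by shrinking the number of extensions.

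The genuine gap is in the ramified stage. You assert that after adjoining roots of unity the extension ``becomes Kummer, generated by elements $y_1,\ldots,y_e\in S$ whose $\nu^*$-values form a complete set of representatives of $\Phi_{\nu^*}/\Phi_{\nu}$,'' and that these give a graded basis. The Kummer/totally-ramified structure is a statement about the henselization or completion of the valued field; it does not by itself produce, on any specific birational model, elements of the actual local ring $S$ that generate $S$ as an $R$-module and whose values hit every coset of $\Phi_\nu$ in $\Phi_{\nu^*}$. Producing such generators is the technical core of the theorem: the paper needs the strengthened strong monomialization theorem (Theorem \ref{SSM}), toric/monoid algebra (Propositions \ref{BG1}, \ref{BG2}, the construction of $R_1,S_1$ and the parallelepiped basis $\{z^\sigma\mid\sigma\in\Lambda\}$), Nakayama-type arguments through $\hat S_1$ to descend to $S_1$, and Theorems 6.1 and 4.10 of \cite{CP} to identify $|\det(A)|=e$ and $\ZZ^n/A^t\ZZ^n\cong\Phi_{\nu^*}/\Phi_\nu$ (Proposition \ref{PropVG}). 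Even in the rank-one, algebraically closed residue field case of \cite{C11} this step required monomialization plus Abhyankar--Jung; it is not a consequence of Kummer theory. Finally, your closing claim that the general case follows by ``composing $\nu^*$ into rank-$1$ pieces'' is exactly what the paper identifies as the essential obstruction: extensions of composite valuations to completions are poorly understood, and (as the example at the end of Section \ref{SecUnR} shows) semigroups can change even under unramified extensions when the rank exceeds one, so the rank-one argument does not simply iterate; the paper instead handles arbitrary rank directly through Theorem \ref{SSM}.
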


We  deduce the following proposition from the proof of Theorem \ref{ThmAJV}..

\begin{Proposition}\label{PropAJV2} Let assumptions and conclusions be as in Theorem \ref{ThmAJV}. Further assume that $K$ contains an algebraically closed field $k'$ such that $k'\cong V_{\nu}/m_{\nu}$. Then
$\Phi_{\nu^*}/\Phi_{\nu}$ acts on ${\rm gr}_{\nu^*}(S)$ with ${\rm gr}_{\nu^*}(S)^{\Phi_{\nu^*}/\Phi_{\nu}}\cong {\rm gr}_{\nu}(R)$.
\end{Proposition}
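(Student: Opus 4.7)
The plan is to deduce Proposition \ref{PropAJV2} from Theorem \ref{ThmAJV} via a character-theoretic argument. Since $k'\cong V_\nu/m_\nu$ is algebraically closed and $V_{\nu^*}/m_{\nu^*}$ is algebraic over $V_\nu/m_\nu$ (as $K^*/K$ is finite), we have $f=1$, so Theorem \ref{ThmAJV} gives that $M:={\rm gr}_{\nu^*}(S)$ is a free $B:={\rm gr}_\nu(R)$-module of rank $e=|G|$, where $G:=\Phi_{\nu^*}/\Phi_\nu$. Because $k'$ is algebraically closed of characteristic zero, it contains all $|G|$-th roots of unity, so the character group $\widehat G:=\operatorname{Hom}(G,k'^\times)$ is abstractly isomorphic to $G$.

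Next, I would define a $\widehat G$-action on $M$ by letting $\chi\in\widehat G$ act on a homogeneous element $\xi$ of degree $\gamma\in\Phi_{\nu^*}$ via $\chi\cdot\xi=\chi(\bar\gamma)\xi$, where $\bar\gamma\in G$ is the class of $\gamma$. This is grading-preserving and multiplicative (since $\chi$ is a homomorphism), hence an action by graded $k'$-algebra automorphisms. Fixing any abstract isomorphism $G\cong\widehat G$ yields the action of $\Phi_{\nu^*}/\Phi_\nu$ required by the statement. Since $\widehat G$ separates points of $G$, the $\widehat G$-invariants are precisely $M^{(1)}:=\bigoplus_{\gamma\in\Phi_\nu}\mathcal{P}_\gamma(S)/\mathcal{P}_\gamma^+(S)$, the sum of graded pieces whose degree lies in $\Phi_\nu$.

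The remaining step is to identify $M^{(1)}$ with ${\rm gr}_\nu(R)$ via the natural ring map $\phi\colon B\to M^{(1)}$, which is injective because $\nu^*|_{R\setminus 0}=\nu$. For each coset $c\in G$, set $M^{(c)}:=\bigoplus_{\gamma\in c}M_\gamma$; these are graded $B$-submodules, and $M=\bigoplus_c M^{(c)}$, so each $M^{(c)}$ is graded free over $B$ of some rank $r_c$ with $\sum_c r_c=e$. The image of the semigroup $S^S(\nu^*)$ in $G$ is a sub-monoid of a finite group, hence a subgroup, and it generates $G$ because $S^S(\nu^*)$ generates $\Phi_{\nu^*}$ as a group (since $K^*=\mathrm{QF}(S)$); thus it equals $G$ and each $M^{(c)}$ is nonzero. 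Therefore $r_c=1$ for every $c$. From $f=1$ together with $S/m_S=V_{\nu^*}/m_{\nu^*}=k'$ (which holds after the birational extension in Theorem \ref{ThmAJV}), the degree-$0$ piece $M_0$ is one-dimensional over $k'$, forcing the unique basis element of $M^{(1)}$ to have degree $0$ and to be a unit in $k'$; rescaling it to $1$, we get $M^{(1)}=B\cdot 1=\phi(B)$.

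The point that is likely to demand most care is this last identification $M^{(1)}=\phi(B)$: while abstract freeness of $M$ and the character argument pin down the invariant subring set-theoretically, one needs the coset-by-coset rank count combined with $f=1$ to rule out extra free generators hiding in the trivial coset at positive degree. The two essential small inputs are $M_0=k'$ and the sub-monoid-to-subgroup lemma applied to $S^S(\nu^*)\subset\Phi_{\nu^*}$; both are elementary but should be explicitly verified rather than glossed.
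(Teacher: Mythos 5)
Your route is genuinely different from the paper's (a formal character action read off the grading, plus an abstract rank count, instead of a Galois action), and much of it is sound: the submonoid-to-subgroup argument showing every coset of $\Phi_{\nu}$ in $\Phi_{\nu^*}$ meets $S^S(\nu^*)$ is correct, and the coset pieces do have generic rank one (though your claim that each $M^{(c)}$ is graded free needs an argument -- it is a finitely generated graded direct summand of a graded free module over the positively graded ring $B={\rm gr}_{\nu}(R)$ with $B_0$ a field, so graded Nakayama does the job; this is fixable but not automatic). The genuine gap is in the last step, the identification $M^{(1)}=\phi(B)$. What your rescaling argument actually requires is $B_0=M_0$, i.e. $R/m_R=S/m_S$ (and, for the character action to be defined at all, that the $|G|$-th roots of unity lie in $S/m_S$). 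Neither of these follows from ``$f=1$'' together with the statement of Theorem \ref{ThmAJV}: the residue degree of the valuations being $1$ does not force the ring-level residue fields to coincide, and freeness of rank $e$ does not bound $[S/m_S:R/m_R]$, because elements of $M_0$ that are linearly independent over $B_0$ can be dependent over $B$ (the degree-zero part of the graded fraction field of $B$ is in general larger than $B_0$). The equalities you invoke are true for the specific $R\rightarrow S$ produced in the proof of Theorem \ref{ThmAJV} under the extra hypothesis of Proposition \ref{PropAJV2} -- they come from Lemma \ref{Lemma9} and (\ref{eqA53}), (\ref{eqA54}) -- but then you are using the proof, not just the conclusion, of Theorem \ref{ThmAJV}; and once you allow that, the explicit decomposition (\ref{eqA10}) together with Proposition \ref{PropVG} (the homogeneous generators $\tau_{\sigma}y^{\sigma}$ have values in distinct cosets of $\Phi_{\nu}$) gives $M^{(1)}=\phi(B)$ at once, making the coset-by-coset rank count unnecessary.

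For comparison, the paper realizes $\Phi_{\nu^*}/\Phi_{\nu}$ as an actual Galois group: since $V_{\nu}/m_{\nu}$ is algebraically closed of characteristic zero, $G((K^*)^i/K^i)\cong\Phi_{\nu^*}/\Phi_{\nu}$, this group acts on $S_1^s$ and hence on ${\rm gr}_{\overline\nu}(S_1^s)\cong{\rm gr}_{\nu^*}(S)$, and the invariant ring is computed by a trace argument combined with the unique expression (\ref{eqA40}). That proof establishes invariance under genuine ring automorphisms induced by field automorphisms, which is stronger and more natural than invariance under a character action that depends on a chosen isomorphism $G\cong\widehat G$; your action does satisfy the literal statement, but if you keep this route you must state explicitly which residue-field facts you import from the proof of Theorem \ref{ThmAJV} and verify the graded freeness of the summands.
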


Theorem \ref{ThmAJV} generalizes Theorem 1.6 \cite{C11}, which establishes Theorem \ref{ThmAJV}  with  the restrictions  that $\nu^*$ has rank 1, $k$ is algebraically closed  and $V_{\nu^*}/m_{\nu^*}\cong k$.
The proof in \cite{C11} relies on the fact that for a valuation $\nu$ of $K$ of  rank 1 dominating $R$, there is a natural extension $\hat\nu$ of $\nu$ to the quotient field of the completion $\hat R$ which dominates $\hat R$, such that the value group is not very different from that of $\nu$ (\cite{Sp}, \cite{CG}). After blowing up to obtain that $R$ is regular and the discriminant ideal of $R\rightarrow K^*$ is generated by a monomial in a regular system of parameters $x_1,\ldots,x_n$ in $R$, the Abhyankar Jung Theorem \cite{Ab2} gives an inclusion of $k$-algebras (assuming $k$ is algebraically closed of characteristic zero)
$$
\hat R=k[[x_1,\ldots,x_n]]\rightarrow \hat S\rightarrow k[[x_1^{\frac{1}{d}},\ldots,x_n^{\frac{1}{d}}]]
$$
such that $\hat S$ is the invariant ring of a subgroup of $\ZZ_d^n$. Then using the strong monomialization theorem of \cite{C} and \cite{CP}, after blowing up some more to obtain a monomial map which captures the invariants of the extension of valuations,  we show that there is a set of generators of $\hat S$ as an $\hat R$-module whose values are a complete set of representatives of the cosets of $\Phi_{\nu}$ in $\Phi_{\nu^*}$, from which the conclusions of Theorem \ref{ThmAJV} follow, in the case that $\nu$ has rank 1 and $k=V_{\nu^*}/m_{\nu^*}$ is algebraically closed (of characteristic zero).

The significant difficulty in extending this proof to the general case of the statement of Theorem \ref{ThmAJV} is when $\nu$ has arbitrary rank. In this case the structure of an extension $\hat\nu$ of $\nu$ to a valuation dominating $\hat R$ is not well understood, although the structure is known to be complicated (\cite{GAST}). In fact, even under a finite unramified  extension the semigroup of a valuation of rank $>1$ can increase, as shown in the example at the end of Section \ref{SecUnR}, although the value groups will stay the same. 

We prove the following proposition on the extension of associated graded rings under an unramified extension. Related problems are considered in \cite{GAST}.

\begin{Proposition}\label{PropUnR}  Suppose that $R$ and $S$ are  normal local rings such that $R$ is excellent,  $S$ lies over  $R$ and $S$ is unramified over $R$, $\tilde \nu$ is a valuation of the quotient field $L$ of $S$ which dominates $S$, and $\nu$ is the restriction of $\tilde \nu$ to the quotient field $K$ of $R$. Suppose that $L$ is finite separable over $K$.  Then there exists a normal local ring $R'$, which is a birational extension of $R$ and  is dominated by $\nu$  such that if $R''$ is a normal local ring which is a birational extension of $R'$ and is
dominated by $\nu$ and $S''$ is the normal local ring of $L$ which is dominated by $\tilde\nu$ and lies over $R''$, then $R''\rightarrow S''$ is unramified, and
$$
{\rm gr}_{\tilde\nu}(S'')\cong {\rm gr}_{\nu}(R'')\otimes_{R''/m_{R''}}S''/m_{S''}.
$$
\end{Proposition}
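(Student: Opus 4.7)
My plan is to reduce the problem to a controlled simple extension $R[\theta]$ via the structure theorem for unramified extensions, prove a valuation estimate that immediately gives the desired isomorphism, and then perform a residue-field-enlarging birational extension of $R$ along $\nu$ to ensure the hypothesis of the estimate. Since $R$ is excellent and $R\to S$ is unramified, there is $\theta\in S$ whose residue $\bar\theta\in S/m_S$ is a primitive element over $R/m_R$ with monic minimal polynomial $g(T) \in R[T]$ of degree $f=[S/m_S:R/m_R]$, with $g'(\theta)$ a unit in $S$, and with $S=R[\theta]_{\mathfrak p}$ where $\mathfrak p=m_R R[\theta]+g(\theta)R[\theta]$.

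The central estimate is: if $1,\bar\theta,\ldots,\bar\theta^{f-1}$ are linearly independent over $V_\nu/m_\nu$ inside $V_{\tilde\nu}/m_{\tilde\nu}$, then for every $h=\sum_{i=0}^{f-1}c_i\theta^i$ with $c_i\in R$,
\[
\tilde\nu(h)=\min_{0\le i\le f-1}\nu(c_i),
\]
and in particular $\Phi_{\tilde\nu}=\Phi_\nu$. The proof is a one-line reduction: the inequality $\tilde\nu(h)\ge\min_i\nu(c_i)=\gamma$ is immediate (as $\tilde\nu(\theta^i)=0$), and for the reverse one picks $\pi\in K$ with $\nu(\pi)=\gamma$, whereupon $\pi^{-1}h\in V_{\tilde\nu}$ reduces modulo $m_{\tilde\nu}$ to $\sum_i\overline{c_i/\pi}\cdot\bar\theta^i$, which is nonzero by linear independence. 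Granting the estimate, the natural graded $(R/m_R)$-algebra map ${\rm gr}_\nu(R)\otimes_{R/m_R}S/m_S\to{\rm gr}_{\tilde\nu}(S)$, sending the class of $c\in\mathcal P_\gamma(R)$ tensored with $\bar\theta^i$ to the class of $c\theta^i$, is bijective: surjectivity because any $s\in S$ differs from some $\sum c_i\theta^i$ by a unit in $S\setminus\mathfrak p$ of $\tilde\nu$-value $0$, and injectivity by the estimate.

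The main obstacle is that the linear-independence hypothesis can fail: $\bar g$ is irreducible over $R/m_R$ but may factor over the larger field $V_\nu/m_\nu$ as $\bar g=\bar g_1\cdots\bar g_r$ into distinct separable irreducibles (one factor per place of $L$ above $\nu$), the factor $\bar g_1$ corresponding to $\tilde\nu$ being the minimal polynomial of $\bar\theta$ over $V_\nu/m_\nu$, of degree $f(\tilde\nu/\nu)\le f$. To cure this I use the identity $V_\nu/m_\nu=\varinjlim R'/m_{R'}$, the direct limit taken over all normal birational extensions of $R$ dominated by $\nu$ (valid by excellence of $R$), to perform a finite such $R\to R'$ that places the coefficients of $\bar g_1$ inside $R'/m_{R'}$. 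In the new setting $(R',S')$, the minimal polynomial of $\bar\theta$ over $R'/m_{R'}$ then coincides with $\bar g_1$, hence with its minimal polynomial over $V_\nu/m_\nu$; the estimate thus applies (with $f$ replaced by $f(\tilde\nu/\nu)$) and yields the isomorphism for $R'$. Stability under any further birational extension $R''\supset R'$ dominated by $\nu$ is immediate, since $R''/m_{R''}\supset R'/m_{R'}$ preserves both the coefficient condition on $\bar g_1$ and the identification $S''=R''[\theta]_{(m_{R''},g_1(\theta))}$, so that $R''\to S''$ remains unramified and the same estimate applies. The technical heart of the argument is the residue-field enlargement, a routine consequence of excellence.
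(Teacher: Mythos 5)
There is a genuine gap, and it sits exactly at your surjectivity step. First, a symptom of the problem: if $S=R[\theta]_{\mathfrak p}$ then ${\rm QF}(R[\theta])=K(\theta)=L$, so the monic minimal polynomial of $\theta$ over $K$ (which does lie in $R[T]$ because $R$ is normal) has degree $[L:K]$, not $f=[S/m_S:R/m_R]$; these agree only when $S$ is the unique local ring of $L$ lying over $R$. Consequently a general element of $S''$ is $P(\theta)/u$ with $\deg P\le [L:K]-1$ and $u\notin \mathfrak p$, and your estimate --- which is correct, but only for sums $\sum_i c_i\theta^i$ with $i$ strictly below $\deg \bar g_1$ --- says nothing about such elements: a monic lift $g_1(\theta)$ has $\min_i\nu(c_i)=0$ while $\tilde\nu(g_1(\theta))>0$. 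Your claim that every $s\in S''$ is a $\tilde\nu$-unit times an $R''$-combination of $1,\theta,\ldots,\theta^{\deg\bar g_1-1}$ would follow from Nakayama if $S''$ were a finite $R''$-module, but $S''$ is only a localization of the integral closure at one of its maximal ideals, and when $\nu$ has several extensions to $L$ it is never finite over $R''$, no matter how much one blows up; so this step is unproved, and in fact false in general.

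That this is not a repairable technicality is shown by the paper's own example at the end of Section \ref{SecUnR}: there $R=k[x,y,z]_{(x,y,z)}$ and $S=(R[w]/(w^2-(z+1)))_{(x,y,z,w-1)}$ is unramified over $R$ with $S/m_S=R/m_R=k$, the residue of $\theta=w$ in $V_{\tilde\nu}/m_{\tilde\nu}$ is $1$, so $\bar g_1(T)=T-1$ already has its coefficients in $R/m_R$ and your recipe prescribes no blow-up at all; yet $\tilde\nu(y-wx)$ maps to $\frac{3}{2}\notin S^{R_{(x,y)}}(\nu_1)$, so ${\rm gr}_{\tilde\nu}(S)\not\cong{\rm gr}_{\nu}(R)\otimes_k k$. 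The troublesome element $y-x\theta$ uses exactly a power of $\theta$ in the range $\deg\bar g_1\le i\le [L:K]-1$ that your estimate does not reach. What is missing is the treatment of the several extensions of $\nu$ to $L$: the paper first blows up (Lemmas \ref{Lemma10} and \ref{Crit}) to separate the centers of the conjugate valuations, then proves via a Chinese-remainder-plus-trace argument (Proposition \ref{PropSplit}) that the associated graded ring is unchanged in the decomposition (splitting) step, and only afterwards, over the splitting field where unique lying over gives finiteness of the module extension, runs the Nakayama-plus-linear-independence argument of Proposition \ref{PropInert}; there Lemma \ref{Lemma9} plays the role of your coefficient condition on $\bar g_1$. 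Your residue-field enlargement reproduces the inertia-step preparation, but the decomposition step, which is where the necessity of a birational extension really enters, is omitted entirely.
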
 

As remarked above, we give an example at the end of 
 Section \ref{SecUnR} showing  that a birational extension of $R$ may be necessary to obtain the conclusions of Proposition \ref{PropUnR}.
 
 We use ramification theory and Proposition \ref{PropUnR} to reduce the proof of Theorem \ref{ThmAJV} to the case when $\nu^*$ is the unique extension of $\nu$ to $K^*$ and we have an equality of residue fields $V_{\nu^*}/m_{\nu^*}=V_{\nu}/m_{\nu}$. We derive in Theorem \ref{SSM} an extension of the strong monomialization theorem of \cite{C} and \cite{CP}, which allows us to find, after some blow ups along the valuation to capture in $R\rightarrow S$ invariants of the extension of valuations,  a set of generators of $S$ as an $R$-module whose values are a complete set of representatives of the cosets of $\Phi_{\nu}$ in $\Phi_{\nu^*}$, from which Theorem \ref{ThmAJV} follows. 

\section{notation} 
We will denote the
maximal ideal of a local ring $R$ by $m_R$.
We will denote the  quotient field of a domain $R$ by $QF(R)$.
Suppose that $R\subset S$ is an inclusion of local rings. We will say that $R$ dominates
$S$ if $m_S\cap R=m_R$.  
Suppose that $\tilde R$ is a local domain with quotient field $K={\rm QF}(R)$. We will say that $\tilde R$ is a birational extension of $R$ if $\tilde R$ dominates $R$ and $\tilde R$ is a localization of a finite type $R$-algebra.
Suppose that $K^*$ is a finite extension of a field $K$,
 $R$ is a local ring with quotient field $QF(R)=K$ and $S$ is a local ring with quotient field $QF(S)=K^*$. Suppose that $R$ and $S$ are normal. We will say
that $S$ lies over $R$ and $R$ lies below $S$ if $S$ is a localization at a maximal ideal of the integral closure of $R$ in $K^*$. 
If $R$ is a local ring, $\hat R$ will denote the completion of $R$ at its maximal ideal.
If $M$ is a finite field extension of a field $L$, we will denote the group of
$L$-automorphisms of $M$ by $\text{Aut}(M/L)$. If $M$ is  Galois extension of $L$ we will write $G(M/L)=\mbox{Aut}(M/L)$.

Good introductions to the valuation theory which we require in this paper can be found  in Chapter VI of \cite{ZS2} and in \cite{RTM}.  
Let $\nu$ be a valuation of a field $K$.
We will denote by $V_{\nu}$ the associated valuation ring, and the maximal ideal of $V_{\nu}$ by $m_{\nu}$. 
The value group of a valuation $\nu$ will be denoted by $\Phi_{\nu}$.  If $R$ is a subring of $V_{\nu}$ then the center of $\nu$ (the center of $V_{\nu}$)
on $R$ is the prime ideal $R\cap m_{\nu}$. 

Let ${\rm gr}_{\nu}(R)$ be the associated graded ring of $R$ along $\nu$ defined in (\ref{eqN31}). For $f\in R$, let the initial form ${\rm in}_{\nu}(f)\in {\rm gr}_{\nu}(R)$ be the class of $f$ in $\mathcal P_{\nu(f)}(R)/\mathcal P^+_{\nu(f)}(R)$.

Suppose that $R$ is a local domain. A monoidal transform $R\rightarrow R_1$ is a 
birational extension of local domains such that $R_1=R[\frac{P}{x}]_m$ where $P$ is
a  prime ideal of $R$ such that $R/P$ is regular, $0\ne x\in P$ and $m$ is a prime ideal of $R[\frac{P}{x}]$
such that $m\cap R=m_R$. $R\rightarrow R_1$ is called a quadratic transform  if $P=m_R$.

If $R$ is regular, and $R\rightarrow R_1$ is a monoidal transform, 
then there exists a regular system of parameters $(x_1,\ldots, x_n)$ in
$R$ and $r\le n$ such that
$$
R_1=R\left[\frac{x_2}{x_1},\ldots,\frac{x_r}{x_1}\right]_m.
$$

Suppose that $\nu$ is a valuation of the quotient field of $R$ with valuation ring $V_{\nu}$
which dominates $R$. Then $R\rightarrow R_1$ is a monoidal transform along $\nu$
(along $V_{\nu}$) if $\nu$ dominates $R_1$.

We will use the following properties of an excellent ring $R$ (from Scholie IV.7.8.3 \cite{EGAIV}). If $R'$ is a localization of a finite type $R$-algebra then $R'$ is excellent. If $R$ is a domain and $R'$ is the integral closure of $R$ in a finite field extension of the quotient field of $R$ then $R'$ is a finite $R$-module. If $R$ is a normal local ring, then its completion $\hat R$ is a normal local ring.

If $\Lambda$ is a subset of a group $H$, then we will denote the group generated by $\Lambda$ by $G(\Lambda)$.

\section{Associated graded rings in splitting fields and inertia fields}\label{SecVT}

We now introduce some notation which we will use throughout this section.  We refer to Sections 10 and 11 of Chapter VI \cite{ZS2}.
Suppose that $K$ is a field with a valuation $\nu$, and that $\nu$ dominates an excellent normal local ring $R$ which has $K$ as its quotient field.  Let $V_{\nu}$ be the valuation ring of $\nu$ with maximal ideal $m_{\nu}$.  Let $r$ be the rank of $\nu$, which is finite since $\nu$ dominates the Noetherian local ring $R$ (by Proposition 1, page 330 \cite{ZS2}). Let
\begin{equation}\label{eqVT1}
0=\Phi_r\subset \Phi_{r-1}\subset \cdots \subset \Phi_1\subset \Phi_0=\Phi_{\nu}
\end{equation}
be the chain of isolated subgroups of the value group $\Phi_{\nu}$ of $\nu$. Let
\begin{equation}\label{eqVT2}
0=I_0\subset I_1\subset \cdots\subset I_r=m_{\nu}
\end{equation}
be the chain of prime ideals in $V_{\nu}$. The $\Phi_i$ are related to the $I_i$ as follows. Let 
$$
U_i=\{\nu(a)\mid 0\ne a\in I_i\}.
$$
 Then 
\begin{equation}\label{eq60}
\mbox{$\Phi_i$ is the complement of $U_i$ and $-U_i$ in $\Phi_{\nu}$.}
\end{equation}
For $1\le i\le r$, let $\nu_i$ be the specializations of $\nu$; the valuation ring of $\nu_i$ on $A$ is $V_{\nu_i}=(V_{\nu})_{I_i}$ and the value group of $\nu_i$ is $\Phi_0/\Phi_i$. We  have that $\nu_r=\nu$.

If $A$ is a subring of $V_{\nu}$, then the centers of the  specializations of $V_{\nu}$ on $A$ is the chain of prime ideals
$$
0=I_0\cap A\subset I_1\cap A\subset \cdots\subset I_r\cap A=m_{\nu}\cap A.
$$

Let $K^*$ be a finite extension field of $K$, and let $\nu^*$ be an extension of $\nu$ to $K^*$.  Let $\nu_1^*=\nu^*, \nu_2^*,\ldots,\nu^*_a$ be  all of the extensions of $\nu$ to $K^*$. Let
$$
0=I_{i.0}^*\subset I_{i,1}^*\subset \cdots \subset I_{i,r}^*=m_{\nu^*_i}
$$
be the chain of prime ideals in $V_{\nu^*_i}$ for $1\le i\le a$.  Let $\nu_{j,i}^*$ be the specializations of $V_{\nu_j^*}$, with valuation rings $V_{\nu_{j,i}^*}=(V_{\nu_j^*})_{I^*_{j,i}}$. Let
$$
0=\Phi_r^*\subset \cdots \subset \Phi_1^*\subset \Phi_0^*=\Phi_{\nu^*}
$$
be the isolated subgroups of the value group $\Phi_{\nu^*}$ of $\nu^*$.
The value group of 
$V_{\nu_{1,i}^*}$ is $\Phi_0^*/\Phi_i^*$.

Define a chain of prime ideals in $R$ by
\begin{equation}\label{eq1}
0=P_0=I_0\cap R\subset P_1=I_1\cap R\subset \cdots\subset P_r=I_r\cap R=m_R.
\end{equation}

We have that $P_j=I_{i,j}^*\cap R$ for $1\le i\le a$ and $0\le j\le r$. 

\begin{Lemma}\label{Lemma10} There exists a normal local ring $R'$ which birationally dominates $R$, such that $\nu$ dominates $R'$, and if $R''$ is a normal local ring which birationally dominates $R'$ and is dominated by $\nu$, then the prime ideals $I_i\cap R''$ are all distinct, and 
$$
{\rm trdeg}_{{\rm QF}(R''/R''\cap I_i)}{\rm QF}(V_{\nu}/I_i)=0
$$
for all $i$.
\end{Lemma}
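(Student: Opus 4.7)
The plan is to construct $R'$ by adjoining finitely many elements of $V_\nu$ to $R$ (then passing to integral closure and localizing at the center of $\nu$), selecting some elements to witness strict containment among the centers $I_j\cap R'$ and others to kill residue-field transcendence degrees, and then to observe that both properties persist under further birational extensions dominated by $\nu$. The persistence is straightforward: if $a\in R'$ has $\nu(a)$ positive and in $\Phi_{j-1}\setminus\Phi_j$, then by convexity of $\Phi_j$ we get $a\in I_j\setminus I_{j-1}$, and this stays true in any larger $R''\supset R'$; moreover, since $R''/(R''\cap I_j)\supset R'/(R'\cap I_j)$ inside $V_\nu/I_j$, the transcendence degree of ${\rm QF}(V_\nu/I_j)$ over the intermediate residue field ${\rm QF}(R''/(R''\cap I_j))$ is at most that over ${\rm QF}(R'/(R'\cap I_j))$, so it drops to $0$ once the latter does.

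For the construction, I first pick, for each $j=1,\ldots,r$, an element $a_j\in K$ with $\nu(a_j)>0$ in $\Phi_{j-1}\setminus\Phi_j$; this is possible since the chain (\ref{eqVT1}) is strict, so each quotient $\Phi_{j-1}/\Phi_j$ is a nonzero totally ordered group. Next, by Abhyankar's inequality applied to the specialization $\nu_j$, which dominates the Noetherian local ring $R_{P_j}$ with residue fields identified as $\kappa(R_{P_j})={\rm QF}(R/P_j)$ and $\kappa(\nu_j)={\rm QF}(V_\nu/I_j)$, the transcendence degree
\[
d_j:={\rm trdeg}_{{\rm QF}(R/P_j)}{\rm QF}(V_\nu/I_j)\leq\dim R_{P_j}\leq\dim R
\]
is finite. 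Whenever some $d_j>0$, I pick an element of ${\rm QF}(V_\nu/I_j)$ transcendental over ${\rm QF}(R/P_j)$; clearing denominators (at least one of numerator or denominator is itself transcendental) we may take it to be the class of some $\tilde\alpha\in V_\nu$. Let $A\subset K$ be the finite-type $R$-subalgebra generated by all the $a_j$ and all such $\tilde\alpha$, each written as a ratio of elements of $R$ with both numerator and denominator adjoined; by excellence of $R$, the integral closure $\tilde A$ of $A$ in $K$ is a finite $A$-module, and since $A\subset V_\nu$ we have $\tilde A\subset V_\nu$, so $R^{(1)}:=\tilde A_{m_\nu\cap\tilde A}$ is a Noetherian normal local ring birationally dominating $R$, dominated by $\nu$, and containing all the chosen elements.

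Iterating this construction, each step strictly decreases $d_j$ at every $j$ where a transcendental was added, while the persistence argument prevents any $d_{j'}$ from growing. Since each $d_j$ is a nonnegative integer bounded above by $\dim R$, after finitely many iterations we reach an $R'$ with all $d_j(R')=0$, and the elements $a_j$ remain in $R'$, giving the strict chain of centers. The main obstacle I anticipate is the clean verification that Abhyankar's inequality continues to bound the $d_j$ at every stage and that each modification step preserves Noetherianness and normality inside a local birational extension dominated by $\nu$; both rely essentially on excellence, which propagates through localizations of finite-type algebras and makes the integral-closure step module-finite at every iteration.
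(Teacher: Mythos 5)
Your proof is correct, and it shares the paper's skeleton: adjoin finitely many elements of $V_{\nu}$ to $R$, take the integral closure in $K$ (a finite module by excellence), localize at the center of $\nu$, and observe that both conclusions are monotone under further birational extensions dominated by $\nu$. The difference is in how the two conclusions are obtained. The paper uses a single device: for each $i$ it chooses a transcendence basis of ${\rm QF}(V_{\nu}/I_i)$ over ${\rm QF}(R/R\cap I_i)$ consisting of classes of elements of $I_{i+1}$ (with $I_{r+1}=V_{\nu}$) and adjoins all of them at once; since a transcendental class is nonzero, the same elements also witness $I_i\cap R''\ne I_{i+1}\cap R''$. You separate the two conclusions: the elements $a_j$ with $\nu(a_j)>0$ and $\nu(a_j)\in\Phi_{j-1}\setminus\Phi_j$ lie in $I_j\setminus I_{j-1}$ by (\ref{eq60}) and force the centers apart, while the residue transcendence degrees are killed by an iteration adjoining one transcendental at a time, which terminates because each $d_j$ is a nonnegative integer that strictly drops when positive and never grows. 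What your route buys is that it makes explicit two points the paper leaves to the reader: that a transcendence basis can be arranged to lie in $I_{i+1}$, and that distinctness of centers also holds at indices where that basis is empty (there one needs the remark that equal consecutive centers on $R''$ would make ${\rm QF}(V_{\nu}/I_i)$ carry a nontrivial valuation trivial on ${\rm QF}(R''/R''\cap I_i)$, forcing positive transcendence degree); your $a_j$ bypass this. What it costs is the iteration, which is avoidable: you could lift a full transcendence basis to $V_{\nu}$ for each $i$ and adjoin it in one step, exactly as the paper does, and the re-verification of Abhyankar's inequality at later stages that you flag as the main obstacle is unnecessary, since initial finiteness plus monotonicity already gives termination. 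One wording slip to fix: what gets adjoined are the elements $a_j,\tilde\alpha\in V_{\nu}$ themselves (as your opening paragraph correctly says), not their numerators and denominators from $R$, which would change nothing.
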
 

\begin{proof} Let $I_{r+1}=V_{\nu}$. We have that ${\rm trdeg}_{{\rm QF}(R/R\cap I_i)}{\rm QF}(V_{\nu}/I_i)<\infty$
 for $0\le i\le r$  by Proposition 1, page 330 \cite{ZS2}. Thus there exist $z_{1,j},\ldots,z_{\alpha_j,j}\in I_j$ for $1\le j\le r+1$ such that for $0\le i\le r$, 
$$
z_{1,i+1},\ldots,z_{\alpha_{i+1},i+1}
$$ 
is a transcendence basis of $(V_{\nu}/I_i)_{I_i}$ over ${\rm QF}(R/R\cap I_i)$. Let $A$ be the integral closure of 
$$
R[z_{i,j}|1\le j\le r+1, 1\le i\le \alpha_j]
$$
 in $K$, and let $R'=A_{A\cap m_{\nu}}$. Then $R'$ satisfies the conclusions of the lemma.
\end{proof}

After replacing $R$ with $R'$, we may assume that $R=R'$ satisfies the conclusions of Lemma \ref{Lemma10}.

 Let $T$ be the integral closure of $R$ in $K^*$ and let  $m_j=T\cap m_{\nu_j^*}$ for $1\le j\le a$ and $m=m_1$.  The ring $T$ is a finite $R$-module since $R$ is excellent.

\begin{Lemma}\label{Crit} There exists a birational extension $R'$ of $R$, where $R'$ is a normal local ring which is dominated by $\nu$, such that
if $R''$ is a normal local ring which is a birational extension of $R'$ which is dominated by $\nu$ and $V_{\nu_{j,i}^*}\ne V_{\nu_{k,i}^*}$ for some $i,j,k$, then 
$I_{j,i}^*\cap C\ne I_{k,i}^*\cap C$ and $I_{j,i}^*\cap C\not\subset I_{k,r}^*\cap C$ where $C$ is the integral closure of $R''$ in $K^*$.
\end{Lemma}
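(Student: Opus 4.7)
The plan is to produce, for each $j\in\{1,\ldots,a\}$, a single element $f_j\in K^*$ that witnesses all the required separations with first index $j$, and then to blow up $R$ enough that every $f_j$ becomes integral over the resulting ring. Note first that the case $i=0$ is vacuous since $V_{\nu^*_{j,0}}=K^*$ for all $j$. Next, the rank-$r$ valuation rings $V_{\nu^*_1},\ldots,V_{\nu^*_a}$ are pairwise incomparable: a proper containment between valuation rings of $K^*$ would strictly decrease the Krull dimension, whereas rank is preserved under the finite extension $K\rightarrow K^*$, so every $\nu^*_l$ has rank equal to the rank of $\nu$.

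For each $j$ I would choose a positive $\gamma_j\in\Phi_{\nu^*_j}$ that does not lie in the largest proper isolated subgroup; since every other proper isolated subgroup of $\Phi_{\nu^*_j}$ is contained in the largest one, the image of $\gamma_j$ in the value group of $V_{\nu^*_{j,i}}$ is then strictly positive for every $i\ge 1$. The approximation theorem for pairwise incomparable valuations (Theorem 18, Chapter VI of \cite{ZS2}), applied to $\nu^*_1,\ldots,\nu^*_a$, furnishes $f_j\in K^*$ with $\nu^*_j(f_j)=\gamma_j$ and $\nu^*_l(f_j)=0$ for all $l\ne j$. Since $\nu^*_l(f_j)\ge 0$ for every $l$, each $f_j$ lies in $\bigcap_l V_{\nu^*_l}$, which is the integral closure of $V_\nu$ in $K^*$; in particular the coefficients of the minimal polynomial of $f_j$ over $K$ all lie in $V_\nu$.

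Adjoining these finitely many coefficients (over all $j$) to $R$, normalizing the result in $K$ (a finite operation because $R$ is excellent), and localizing at the center of $\nu$ produces a normal local birational extension $R'$ of $R$ dominated by $\nu$ over which every $f_j$ is integral; hence each $f_j$ lies in the integral closure $C'$ of $R'$ in $K^*$. For any normal local birational extension $R''$ of $R'$ dominated by $\nu$ with integral closure $C$ in $K^*$, the inclusion $R'\subset R''$ yields $C'\subset C$, so every $f_j$ lies in $C$. Given any triple $(i,j,k)$ with $j\ne k$ and $V_{\nu^*_{j,i}}\ne V_{\nu^*_{k,i}}$, the construction produces $\nu^*_{j,i}(f_j)>0$ and $\nu^*_k(f_j)=0$, so $f_j\in I^*_{j,i}\cap C$ while $f_j\notin I^*_{k,r}\cap C\supset I^*_{k,i}\cap C$, giving both $I^*_{j,i}\cap C\ne I^*_{k,i}\cap C$ and $I^*_{j,i}\cap C\not\subset I^*_{k,r}\cap C$. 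The point requiring care is the pairwise incomparability of the extensions $\nu^*_l$, which is what allows the approximation theorem to be invoked; attempting a joint approximation that also constrained the intermediate specializations $V_{\nu^*_{j,i}}$ directly would fail, since those rings need not be incomparable with the other $V_{\nu^*_l}$.
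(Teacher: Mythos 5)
The fatal step is the existence of your elements $f_j$: an element with $\nu^*_j(f_j)=\gamma_j$ lying outside the largest proper isolated subgroup of $\Phi_{\nu^*_j}$ and with $\nu^*_l(f_j)=0$ for all $l\ne j$ need not exist, and no approximation theorem can produce it. The extensions $\nu^*_1,\ldots,\nu^*_a$ are indeed pairwise incomparable, but when $r>1$ they are in general \emph{dependent}: two of them typically share a nontrivial common coarsening, and that is exactly the situation this lemma is built to handle. Suppose $V_{\nu^*_{j,1}}=V_{\nu^*_{k,1}}=W$ for some $k\ne j$; this happens, for instance, for $K=\QQ(x,y)$ with $\nu$ the composite of the $x$-adic valuation and the $(y^2-2)$-adic valuation of the residue field $\QQ(y)$, and $K^*=K(\sqrt{2})$, where both extensions of $\nu$ have the same rank one coarsening $W$ (the $x$-adic valuation ring of $K^*$). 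For every $x\in K^*$ the value $w(x)$ is the image of $\nu^*_j(x)$ in the quotient of $\Phi_{\nu^*_j}$ by its largest proper isolated subgroup, and equally the image of $\nu^*_k(x)$ in the corresponding quotient of $\Phi_{\nu^*_k}$. Your two requirements then force simultaneously $w(f_j)\ne 0$ (since $\gamma_j$ is outside the largest proper isolated subgroup) and $w(f_j)=0$ (since $\nu^*_k(f_j)=0$), a contradiction. This is also why the citation of the approximation theorem does not save the argument: the Zariski--Samuel statement concerns pairwise independent valuations, and any correct approximation statement for incomparable but dependent valuations must require the prescribed values to be compatible in the value groups of all common coarsenings --- a condition your prescription violates precisely when two extensions agree at some level. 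Note also that your $f_j$ is far stronger than what is needed: it would separate $j$ from every $k$ at every level $i\ge 1$, whereas the lemma only asks for separation for those triples $(i,j,k)$ with $V_{\nu^*_{j,i}}\ne V_{\nu^*_{k,i}}$.

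The repair is to choose a separating element for each such triple individually, inside $A=\bigcap_l V_{\nu^*_l}$, the integral closure of $V_\nu$ in $K^*$. One first shows that if $I^*_{j,i}\cap A\subset I^*_{k,r}\cap A$, then localizing $A$ at these primes exhibits $V_{\nu^*_{j,i}}$ as a localization of $V_{\nu^*_k}$; since $V_{\nu^*_{j,i}}$ has dimension $i$ this forces $V_{\nu^*_{j,i}}=V_{\nu^*_{k,i}}$. Hence whenever $V_{\nu^*_{j,i}}\ne V_{\nu^*_{k,i}}$ there is $u_{k,j,i}\in I^*_{j,i}\cap A$ with $u_{k,j,i}\notin I^*_{k,r}\cap A$. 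These finitely many elements are integral over $V_\nu$, and the remainder of your argument (adjoining the coefficients of their integral equations to $R$, normalizing in $K$, localizing at the center of $\nu$, and passing to any normal $R''$ birationally dominating $R'$) then goes through verbatim, since each $u_{k,j,i}$ lies in $C$, lies in $I^*_{j,i}\cap C$, and avoids $I^*_{k,r}\cap C\supset I^*_{k,i}\cap C$. This per-triple construction is essentially the paper's proof; the single-element-per-$j$ shortcut is the part that cannot be made to work.
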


\begin{proof} Let $A=\cap_{j=1}^a V_{\nu_j^*}$ be the integral closure of of $V_{\nu}$ in $K^*$, so that $A_{A\cap I_{j,i}^*}=V_{\nu_{j,i}^*}$ for all $i,j$ (by Proposition 2.36 \cite{RTM} and Lemma 2.37 \cite{RTM}). We will first show that 
\begin{equation}\label{eqN1}
I_{j,i}^*\cap A\ne I_{k,i}^*\cap A\mbox{ implies }I_{j,i}^*\cap A\not\subset I_{k,r}^*\cap A.
\end{equation}
Suppose that $P=A\cap I_{j,i}^*\subset M=A\cap I_{k,r}^*$.  By Lemma 2.17 \cite{RTM}, $A_P=V_{\nu_{j,i}^*}$ and $A_M=V_{\nu^*_k}$, so $V_{\nu_{j,i}^*}$ is a specialization (localization) of $V_{\nu_k^*}$. Since $V_{\nu_{j,i}^*}$ has dimension  $i$, we have that $V_{\nu_{j,i}^*}=V_{\nu_{k,i}^*}$ and thus $I_{j,i}^*\cap A=I_{k,i}^*\cap A$.

By (\ref{eqN1}), 
\begin{equation}\label{eqN2}
\mbox{there exist $u_{k,j,i}\in A\cap I_{j,i}^*$ such that $u_{k,j,i}\not\in A\cap I_{k,r}^*$ if $A\cap I_{j,i}^*\ne A\cap I_{k,i}^*$.}
\end{equation}

There exist relations
\begin{equation}\label{eqN3}
u_{k,j,i}^{c_{k,j,i}}+\alpha_{k,j,i,c_{k,j,i}-1}u_{k,j,i}^{c_{k,j,i}-1}+\cdots+\alpha_{j,k,i,0}=0
\end{equation}
with $\alpha_{k,j,i,l}\in V_{\nu}$ for $0\le l\le c_{k,j,i}-1$. Let $B$ be the integral closure of
$$
R[\{\alpha_{k,j,i,l}\mid 1\le i\le r, 0\le l\le c_{k,j,i}-1\}]
$$
in $K$ and let $R'=B_{m_{\nu}\cap B}$.

Suppose that $R''$ is a birational extension of $R'$ which is normal and is dominated by $\nu$. Let $C$ be the integral closure of $R''$ in $K^*$. 
Suppose that $V_{\nu_{j,i}^*}\ne V_{\nu_{k,i}^*}$. Then $I_{j,i}^*\cap A\ne I_{k,i}^*\cap A$. Thus $u_{k,j,i}\in I_{j,i}^*\cap C$ is such that $u_{k,j,i}\not\in I_{k,r}^*\cap C$ by (\ref{eqN2}).
Since $I_{k,i}^*\cap C\subset I_{k,r}^*\cap C$, we further have that $u_{k,j,i}\not\in I_{k,i}^*\cap C$.
\end{proof}

From now on in this section, assume that $K^*$ is a Galois extension of $K$.    Decomposition and inertia groups are defined and analyzed in Section 12, Chapter VI of \cite{ZS2} and in Sections 7 and 11 of \cite{RTM}.

\begin{Lemma}\label{Lemma2}  We have that the decomposition groups
$G^s(V_{\nu_j^*}/V_{\nu})\subset G^s(T_{m_j}/R)$ and 
$$
G^s(V_{\nu_j^*}/V_{\nu})=G^s(T_{m_j}/R)
$$
 if and only if $\nu_j^*$ is the unique extension of $\nu$ to $K^*$ which dominates $T_{m_j}$.
\end{Lemma}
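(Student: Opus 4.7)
The plan is to unpack both decomposition groups as stabilizers under the natural action of $G(K^*/K)$, then exploit the transitivity of this action both on the extensions $\nu_1^*,\ldots,\nu_a^*$ of $\nu$ and on the maximal ideals of $T$ lying over $m_R$.

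First I would recall that since $T$ is the integral closure of $R$ in $K^*$, every $\sigma\in G(K^*/K)$ restricts to an automorphism of $T$ fixing $R$, and hence acts on the set of maximal ideals of $T$ lying over $m_R$. By definition, $G^s(T_{m_j}/R)=\{\sigma\in G(K^*/K)\mid \sigma(m_j)=m_j\}$ and $G^s(V_{\nu_j^*}/V_\nu)=\{\sigma\in G(K^*/K)\mid \sigma(V_{\nu_j^*})=V_{\nu_j^*}\}$. The inclusion is then immediate: if $\sigma$ stabilizes $V_{\nu_j^*}$, it stabilizes $m_{\nu_j^*}$, and intersecting with $T$ (which $\sigma$ preserves) yields $\sigma(m_j)=m_j$.

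Next, I would characterize the unique-extension hypothesis ring-theoretically. An extension $\nu_k^*$ dominates $T_{m_j}$ iff $T\cap m_{\nu_k^*}\subset m_j$, and since $T\cap m_{\nu_k^*}=m_k$ is maximal, this forces $m_k=m_j$; conversely, if $m_k=m_j$ then $\nu_k^*$ dominates $T_{m_j}$. Thus $\nu_j^*$ is the unique extension of $\nu$ dominating $T_{m_j}$ if and only if $m_k\neq m_j$ for every $k\neq j$.

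For the equality, assume the uniqueness holds and let $\sigma\in G^s(T_{m_j}/R)$. Since $G(K^*/K)$ acts transitively on $\{V_{\nu_1^*},\ldots,V_{\nu_a^*}\}$ (Theorem 12, Section 7, Chapter V of \cite{ZS2}), we have $\sigma(V_{\nu_j^*})=V_{\nu_k^*}$ for some $k$, and then $\sigma(m_j)=T\cap \sigma(m_{\nu_j^*})=T\cap m_{\nu_k^*}=m_k$. The hypothesis $\sigma(m_j)=m_j$ forces $m_k=m_j$, which by uniqueness gives $\nu_k^*=\nu_j^*$, hence $\sigma\in G^s(V_{\nu_j^*}/V_\nu)$. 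Conversely, if uniqueness fails, pick $k\neq j$ with $m_k=m_j$ and use transitivity to find $\sigma$ with $\sigma(V_{\nu_j^*})=V_{\nu_k^*}\neq V_{\nu_j^*}$; then $\sigma(m_j)=m_k=m_j$, so $\sigma\in G^s(T_{m_j}/R)\setminus G^s(V_{\nu_j^*}/V_\nu)$, contradicting equality. There is no substantive obstacle here; the only point requiring care is distinguishing the action on valuation rings from the action on maximal ideals of $T$, which is precisely the content of the unique-extension condition.
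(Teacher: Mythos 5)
Your proof is correct and takes essentially the same route as the paper's: the inclusion follows from domination, and the equivalence is obtained from the conjugacy of the extensions of $\nu$ under $G(K^*/K)$ combined with the fact that $\sigma(T)=T$, so that stabilizing $V_{\nu_j^*}$ versus stabilizing $m_j$ differ exactly when another extension has center $m_j$ on $T$. The only quibble is bibliographic: the conjugacy theorem for extensions of a valuation to a normal extension is in Chapter VI of \cite{ZS2}, not Chapter V (and in the forward direction you only need that $\sigma(V_{\nu_j^*})$ is again one of the $V_{\nu_k^*}$, not full transitivity).
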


\begin{proof}  Certainly $G^s(V_{\nu_j^*}/V_{\nu})\subset G^s(T_{m_j}/R)$ since $m_{\nu_j^*}\cap T_{m_j}=m_{T_{m_j}}$. Suppose that $\nu'$ is an extension of $\nu$ to $K^*$ which dominates $T_{m_j}$. Then there exists $\sigma\in G(K^*/K)$ such that $\sigma(V_{\nu_j^*})=V_{\nu'}$, and we have that $\sigma(T_{m_j})=T_{m_j}$, so $\sigma\in G^s(T_{m_j}/R)$. Thus $G^s(V_{\nu_j^*}/V_{\nu})=G^s(T_{m_j}/R)$ if and only if $\nu_j^*$ is the unique extension of $\nu$ to $K^*$ which dominates $T_{m_j}$.
\end{proof}

Let $T^s$ be the integral closure of $R$ in $K^s=K^{G^s(V_{\nu^*}/V_{\nu})}$ and let $n_j=m_{\nu_j^*}\cap T^s$ for $1\le j\le a$, $n=n_1$. Let $\nu_j^s$ be the restriction of $\nu_j^*$ to $K^s$ for $1\le j\le a$, $\nu^s=\nu^s_1$.

We  recall a technique to compute norms and traces. Let $K'$ be an intermediate field of the Galois extension $K\rightarrow K^*$. Let $T'$ be the integral closure of $R$ in $K'$. Let $G=G(K^*/K)$ be the Galois group of $K^*$ over $K$ and $G'=G(K^*/K')$ be the Galois group of $K^*$ over $K'$. Let $\sigma_1,\ldots,\sigma_n$ be a complete set of representatives of the cosets of $G'$ in $G$. Then the norm and trace of an element $x\in T'$ of $K'$ over $K$ can be computed  (by formulas (19) and (20) on page 91 \cite{ZS1}) as
$$
{\rm N}(x)={\rm N}_{K'/K}(x)=\prod_{i=1}^n\sigma_i(x)
$$
and
$$
{\rm Tr}(x)={\rm Tr}_{K'/K}(x)=\sum_{i=1}^n\sigma_i(x).
$$

We have that ${\rm N}_{K'/K}(x), {\rm Tr}_{K'/K}(x)\in R$ by formulas (8) and (9) on page 88 and Theorem 4, page 260 \cite{ZS1}.

 Let $G=G(K^*/K)$ be the Galois group of $K^*$ over $K$ and  let
$$
G_{j,i}=G^s(V_{\nu_{j,i}^*}/V_{\nu_i})
$$
for $1\le i\le r$. For fixed $j$, we have inclusions
$$
(1)\subset G_{j,r}=G^s(V_{\nu_j^*}/V_{\nu})\subset G_{j,r-1}\subset \cdots \subset G_{j,2}\subset G_{j,1}\subset G
$$
with a tower of fixed fields
\begin{equation}\label{eq16}
K\subset K^{j,1}\subset K^{j,2}\subset \cdots\subset K^{j,r-1}\subset K^{j,r}\subset K^*,
\end{equation}
where $K^{j,i}=K^{G_{j,i}}$. Let $G_i=G_{1,i}$ for $1\le i\le r$.

\begin{Lemma}\label{Lemma8} Let $R'$ be the birational extension of $R$ of the conclusions of Lemma \ref{Crit}.
If $R''$ is a normal local ring which is a birational extension of $R'$ which is dominated by $\nu$, then 
$$
G_{j,i}=G^s(V_{\nu_{j,i}^*}/V_{\nu_i})=G^s(C_{I_{j,i}^*\cap C}/R''_{I_i\cap R''})
$$
for  $1\le j\le a$ and $1\le i\le r$, where $C$ is the integral closure of $R''$ in $K^*$.
\end{Lemma}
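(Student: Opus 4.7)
The plan is to verify the two inclusions defining the claimed identity; the inclusion $G^s(V_{\nu_{j,i}^*}/V_{\nu_i})\subset G^s(C_{I_{j,i}^*\cap C}/R''_{I_i\cap R''})$ will be formal, while the reverse inclusion is the real content and is where Lemma \ref{Crit} enters.

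For the forward inclusion, I would take $\sigma\in G$ with $\sigma(V_{\nu_{j,i}^*})=V_{\nu_{j,i}^*}$, observe that $\sigma$ then preserves the maximal ideal $I_{j,i}^*V_{\nu_{j,i}^*}$, and contract to $C$ to get $\sigma(I_{j,i}^*\cap C)=I_{j,i}^*\cap C$. Since $\sigma$ is a $K$-automorphism of $K^*$ it fixes $R''\subset K$ elementwise and sends $C$ to $C$, so it induces an automorphism of $C_{I_{j,i}^*\cap C}$ over $R''_{I_i\cap R''}$ fixing the maximal ideal, placing $\sigma$ in the splitting group on the right.

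For the reverse inclusion I would argue contrapositively. Suppose $\sigma\in G$ does not fix $V_{\nu_{j,i}^*}$. Since $G$ permutes the extensions of $\nu$ to $K^*$, write $\sigma(V_{\nu_j^*})=V_{\nu_k^*}$ for some $k$; the induced ring isomorphism $V_{\nu_j^*}\to V_{\nu_k^*}$ carries the unique height-$i$ prime $I_{j,i}^*$ to $I_{k,i}^*$, so that $\sigma(V_{\nu_{j,i}^*})=V_{\nu_{k,i}^*}$, and by hypothesis $V_{\nu_{k,i}^*}\ne V_{\nu_{j,i}^*}$. Because $R''$ is a normal birational extension of $R'$ dominated by $\nu$, Lemma \ref{Crit} applies and yields $I_{j,i}^*\cap C\ne I_{k,i}^*\cap C$. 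But $\sigma(I_{j,i}^*\cap C)=\sigma(I_{j,i}^*)\cap C=I_{k,i}^*\cap C$, so $\sigma$ does not preserve the prime $I_{j,i}^*\cap C$, and hence $\sigma\notin G^s(C_{I_{j,i}^*\cap C}/R''_{I_i\cap R''})$.

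The only nontrivial ingredient is the geometric input supplied by Lemma \ref{Crit}: the main obstacle in the proof is ensuring that distinct specialization centers $V_{\nu_{j,i}^*}\ne V_{\nu_{k,i}^*}$ are already witnessed by distinct primes of $C$, so that the action of $G$ on primes of $C$ is fine enough to detect when $\sigma$ moves $V_{\nu_{j,i}^*}$. Without that preparation the natural containment $G^s(V_{\nu_{j,i}^*}/V_{\nu_i})\subset G^s(C_{I_{j,i}^*\cap C}/R''_{I_i\cap R''})$ can be strict. Once Lemma \ref{Crit} is in hand the remaining argument is a direct bookkeeping exercise on how Galois automorphisms of $K^*/K$ act simultaneously on valuation rings and on primes of the integral closure.
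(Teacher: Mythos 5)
Your proof is correct and follows essentially the same route as the paper: the containment $G^s(V_{\nu_{j,i}^*}/V_{\nu_i})\subset G^s(C_{I_{j,i}^*\cap C}/R''_{I_i\cap R''})$ is the same formal domination argument, and your contrapositive use of Lemma \ref{Crit} for the other containment is just the contrapositive phrasing of the paper's direct argument that $\sigma(I_{j,i}^*\cap C)=I_{j,i}^*\cap C$ forces $\sigma(V_{\nu_{j,i}^*})=V_{\nu_{j,i}^*}$. No gaps.
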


\begin{proof} By Lemma \ref{Crit}, $I_{j,i}^*\cap C=I_{k,i}^*\cap C$ implies $V_{\nu_{j,i}^*}=V_{\nu_{k,i}^*}$.
Suppose $\sigma\in G(K^*/K)$. Then 
$\sigma(I_{j,i}^*\cap C)=I_{j,i}^*\cap C$ implies $\sigma(I_{j,i}^*)=I_{j,i}^*$,
 so that
$$
G^s(C_{C\cap I_{j,i}^*}/(R'')_{R''\cap I_i})\subset G^s(V_{\nu_{j,i}^*}/V_{\nu_i}).
$$
Now
$$
G^s(V_{\nu_{j,i}^*}/V_{\nu_i})\subset G^s(C_{C\cap I_{j,i}^*}/(R'')_{R''\cap I_i})
$$
since $V_{\nu_{j,i}^*}$ dominates $C_{C\cap I_{j,i}^*}$ and $V_{\nu_i}$ dominates $(R'')_{R''\cap I_i}$.
\end{proof}

After replacing $R$ with $R'$, we may assume that $R=R'$ satisfies the conclusions of Lemma \ref{Crit}. 
 Let $S_i$ be a subset of $\{1,2,\ldots, a\}$ such that $1\in S_i$, if $1\le k\le a$ then $V_{\nu^*_{k,i}}=V_{\nu^*_{j,i}}$ for some $j\in S_i$  and $V_{\nu^*_{j,i}}\ne V_{\nu^*_{k,i}}$ if $j\ne k$ are in $S_i$.

Then
\begin{equation}\label{eq12}
G_{j,i}=G^s(T_{Q_{j,i}}/R_{P_i})
\end{equation}
for $j\in S_i$ and $1\le i\le r$, where $Q_{j,i}=T\cap m_{\nu^*_{j,i}}$ for $j\in S_i$.

\begin{Lemma}\label{Lemma6} For fixed $i$ with $1\le i\le r$, the prime ideals $\{Q^s_{j,i}\}_{j\in S_i}$ in $T^s$ are pairwise coprime, where $Q^s_{j,i}=Q_{j,i}\cap T^s$.
\end{Lemma}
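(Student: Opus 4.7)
The plan is to argue by contradiction. Suppose that for some $j\ne k$ in $S_i$ there is a maximal ideal $\mathfrak n$ of $T^s$ containing both $Q^s_{j,i}$ and $Q^s_{k,i}$. Since $Q^s_{j,i}\cap R=P_i\subseteq m_R$, this $\mathfrak n$ must lie over $m_R$, and hence is maximal. First I would apply the going-up theorem to the finite integral extension $T^s\subseteq T$ (finite because $R$ is excellent): starting from the primes $Q_{j,i}$ and $Q_{k,i}$ of $T$, which contract to $Q^s_{j,i}$ and $Q^s_{k,i}$ respectively, I obtain maximal ideals $m_{l'},m_{l''}$ of $T$, both contracting to $\mathfrak n$, with $Q_{j,i}\subseteq m_{l'}=Q_{l',r}$ and $Q_{k,i}\subseteq m_{l''}=Q_{l'',r}$.

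Next I would exploit Lemma~\ref{Crit} in its contrapositive form. The containment $Q_{j,i}\subseteq Q_{l',r}$ is only compatible with $V_{\nu^*_{j,i}}=V_{\nu^*_{l',i}}$; otherwise the conclusion of Lemma~\ref{Crit} would give $Q_{j,i}\not\subseteq Q_{l',r}$. Similarly $V_{\nu^*_{k,i}}=V_{\nu^*_{l'',i}}$. Since $K^*/K^s$ is Galois with Galois group $G(K^*/K^s)=G^s(V_{\nu^*}/V_\nu)$, and this Galois group acts transitively on the primes of $T$ lying over any given prime of $T^s$, there exists $\rho\in G^s(V_{\nu^*}/V_\nu)$ with $\rho(m_{l'})=m_{l''}$. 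Transporting via the correspondence between primes of $T$ lying over $m_R$ and the extensions $\nu^*_1,\ldots,\nu^*_a$ of $\nu$ to $K^*$, this gives $\rho(V_{\nu^*_{l'}})=V_{\nu^*_{l''}}$, and specializing at height $i$, $\rho(V_{\nu^*_{l',i}})=V_{\nu^*_{l'',i}}$. Combining with the identifications above, $\rho(V_{\nu^*_{j,i}})=V_{\nu^*_{k,i}}$.

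To conclude, I would show that such a $\rho$ must in fact fix $V_{\nu^*_{j,i}}$, yielding $V_{\nu^*_{j,i}}=V_{\nu^*_{k,i}}$ and contradicting the defining property of $S_i$ that distinct elements of $S_i$ produce distinct valuation rings at height $i$. The main obstacle is precisely this last step: any $\rho\in G^s(V_{\nu^*}/V_\nu)$ automatically fixes $V_{\nu^*_1}$ and hence each of its specializations $V_{\nu^*_{1,i}}$, but a priori it need not fix $V_{\nu^*_{j,i}}$ for $j\ne 1$ in $S_i$. Overcoming this will require combining the transversal condition picking $S_i$ with the fine separation of the primes $I^*_{j,i}\cap T$ afforded by Lemma~\ref{Crit}, together (if necessary) with a further birational extension of $R$ along $\nu$ refining the argument used there, so as to rule out $G^s(V_{\nu^*}/V_\nu)$-equivalences between distinct $V_{\nu^*_{j,i}}$ with $j\in S_i$. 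Once this $G$-rigidity is established, the previous paragraph produces the desired contradiction, proving pairwise coprimality.
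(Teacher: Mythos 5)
Your argument is sound up to the point where you produce $\rho\in G^s(V_{\nu^*}/V_{\nu})$ with $\rho(V_{\nu^*_{j,i}})=V_{\nu^*_{k,i}}$, but the ``$G$-rigidity'' you then leave open is the entire content of the statement, and it cannot be obtained the way you suggest. Whether two of the valuation rings $V_{\nu^*_{j,i}}$ are conjugate under $G^s(V_{\nu^*}/V_{\nu})$ is intrinsic to the valuations and is unchanged by any further birational extension of $R$ along $\nu$: Lemma \ref{Crit} only separates the centers of valuation rings that are already distinct, and indeed (using Lemma \ref{Crit}) the equality $Q^s_{j,i}=Q^s_{k,i}$ is equivalent, on every model, to such a conjugation existing. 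The rigidity is automatic only when one of the two indices is $1$, since any $\rho\in G^s(V_{\nu^*}/V_{\nu})$ fixes $V_{\nu^*}$ and hence each localization $V_{\nu^*_{1,i}}$; for two indices both different from $1$ it can genuinely fail (already with $r=1$, take $G(K^*/K)\cong S_3$ with $G^s(V_{\nu^*}/V_{\nu})$ of order $2$: the two extensions $\nu^*_2,\nu^*_3$ are interchanged by $G^s(V_{\nu^*}/V_{\nu})$ and their centers on $T^s$ coincide, the classical picture of a non-normal decomposition group). So your route cannot be completed for arbitrary pairs; note, however, that it does close immediately in the case $j=1$ or $k=1$, which is exactly the coprimality used afterwards in the Chinese remainder construction of the $y_i$.

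The paper's proof uses a different mechanism which avoids transitivity and any rigidity statement. Instead of going up from $Q_{j,i}$ and $Q_{k,i}$ to two possibly different maximal ideals of $T$, it lifts the common maximal ideal $Q^s_{l,r}$ of $T^s$ to a single maximal ideal $Q_{n,r}$ of $T$ and then applies the going-down theorem (Proposition 1.24B \cite{RTM}, legitimate since $T^s$ is normal and $T$ is integral over it) to find primes $Q_{a,i}\subset Q_{n,r}$ and $Q_{b,i}\subset Q_{n,r}$ lying over $Q^s_{j,i}$ and $Q^s_{k,i}$ respectively. Because both sit inside the one maximal ideal $Q_{n,r}$, Lemma \ref{Crit} forces $V_{\nu^*_{a,i}}=V_{\nu^*_{n,i}}=V_{\nu^*_{b,i}}$, hence $Q_{a,i}=Q_{b,i}$, while distinctness of $Q^s_{j,i}$ and $Q^s_{k,i}$ forces $Q_{a,i}\ne Q_{b,i}$; this is the contradiction. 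The essential difference from your approach is that Lemma \ref{Crit} is a statement about containments inside a single maximal ideal of $T$, and going \emph{down} is what places both level-$i$ primes in a common maximal ideal so that it applies, whereas going up scatters them and leaves you needing to compare two maximal ideals by a Galois translation. The distinctness input this argument does require, $Q^s_{1,i}\ne Q^s_{k,i}$, holds because $Q_{1,i}$ is the unique prime of $T$ lying over $Q^s_{1,i}$ (Proposition 1.46 \cite{RTM}, as noted right after the lemma), again a consequence of the fact that $G(K^*/K^s)=G^s(V_{\nu^*}/V_{\nu})$ fixes $V_{\nu^*_{1,i}}$ -- the same special role of the index $1$ that rescues your argument in the case that is actually needed.
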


\begin{proof} Suppose that $j,k\in S_i$ with $j\ne k$ and $Q_{j,i}^s$ and $Q_{k,i}^s$ are not coprime. Then there exists a maximal ideal $Q_{l,r}^s$ of $T^s$ such that $Q_{j,i}^s\subset Q_{l,r}^s$ and $Q_{k,i}^s\subset Q_{l,r}^s$. There exists a maximal ideal $Q_{n,r}$ of $T$ such that $Q_{n,r}\cap T^s=Q_{l,r}$ (by Lemma 1.20, page 13 \cite{RTM}) and there exist prime ideals $Q_{a,i}\subset Q_{n,r}$ and $Q_{b,i}\subset Q_{n,r}$ such that $Q_{a,i}\cap T^s=Q_{j,i}^s$ and $Q_{b,i}\cap T^s=Q_{k,i}^s$ by the going down theorem (Proposition 1.24B, page 15 \cite{RTM}). But $Q_{a,i}$ and $Q_{b,i}$ are necessarily distinct, giving a contradiction to the conclusions of Lemma \ref{Crit}.
\end{proof}

For $0\le i\le r-1$, let $\sigma_1^i={\rm id}, \sigma_1^i,\ldots,\sigma_{t_i}^i\in G_i$ be  a complete set of representatives of the cosets of $G_{i+1}$ in $G_i$. Then
$$
\sigma_{i_0}^0\sigma_{i_1}^1\cdots \sigma_{i_{r-1}}^{r-1},\,\,1\le i_j\le t_j\mbox{ and }0\le j\le r-1
$$
is a complete set of representatives of the cosets of $G_r=G^s(V_{\nu^*}/V_{\nu})$ in $G$.

Suppose $i$ is fixed with $1\le i\le r$. By Lemma \ref{Lemma6} and the Chinese remainder theorem, given $n_i\in \NN$, there exists $y_i\in T^s$ such that
\begin{equation}\label{eq7}
y_i\equiv 1\mod (Q_{1,i}^s)^{n_i}\mbox{ and }y_i\equiv 0 \mod (Q_{j,i}^s)^{n_i}\mbox{ if $j\in S_i$ and }j>1.
\end{equation}

By equation (\ref{eq12}), (\ref{eq16})  and  Proposition 1.46 \cite{RTM}, $Q_{1,i}$ is the unique prime  ideal of $T$ lying over $Q_{1,i}^s$ for $0\le i\le r$. Thus 
$$
y_i\equiv 1 \mod Q_{1,i}^{n_i}\mbox{ and }y_i\equiv 0\mod Q_{j,i}^{n_i}\mbox{ if $j\in S_i$ and }j>1.
$$

\begin{Lemma}\label{Lemma7} Suppose that 
$$
\tau=\sigma_1^0\sigma_1^1\cdots\sigma_1^{k-1}\sigma_{i_k}^k\cdots \sigma_{i_{r-1}}^{r-1}
$$
with $i_k>1$. Then
$$
\tau(y_j)\equiv 1\mod (Q_{1,j})^{n_j}\mbox{ if }k\ge j
$$
and
$$
\tau(y_j)\equiv 0 \mod (Q_{1,j})^{n_j}\mbox{ if }k\le j-1.
$$
\end{Lemma}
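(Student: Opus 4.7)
The plan is to chase $\tau$ through the nested decomposition groups $G_r \subset G_{r-1} \subset \cdots \subset G_1 \subset G$ and use the congruences (\ref{eq7}) defining $y_j$, together with the fact that $\tau$ permutes the primes $Q_{b,j}$ of $T$ lying over $P_j$.

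First I would record two structural facts about $\tau = \sigma_1^0\sigma_1^1\cdots\sigma_1^{k-1}\sigma_{i_k}^k\cdots\sigma_{i_{r-1}}^{r-1}$. Since the first $k$ factors are the identity and $\sigma_{i_m}^m \in G_m \subset G_k$ for every $m \geq k$, the product lies in $G_k$. On the other hand, reducing modulo $G_{k+1}$ collapses every factor with $m \geq k+1$ to the identity, leaving $\tau \equiv \sigma_{i_k}^k \pmod{G_{k+1}}$, which is nontrivial because $i_k > 1$ and $\{\sigma_1^k,\ldots,\sigma_{t_k}^k\}$ was chosen to be a set of coset representatives of $G_{k+1}$ in $G_k$. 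Hence $\tau \in G_k \setminus G_{k+1}$.

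Next I split on the two cases. If $k \geq j$, then $\tau \in G_k \subset G_j = G^s(T_{Q_{1,j}}/R_{P_j})$ by (\ref{eq12}), so $\tau(Q_{1,j}) = Q_{1,j}$ and consequently $\tau(Q_{1,j}^{n_j}) = Q_{1,j}^{n_j}$. Since $y_j - 1 \in Q_{1,j}^{n_j}$, applying $\tau$ gives $\tau(y_j) \equiv 1 \pmod{Q_{1,j}^{n_j}}$, which is the first conclusion.

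If $k \leq j - 1$, then $G_j \subset G_{k+1}$, and since $\tau \notin G_{k+1}$ we have $\tau \notin G_j$, equivalently $\tau^{-1} \notin G_j$. Thus $\tau^{-1}(V_{\nu^*_{1,j}}) = V_{\nu^*_{c,j}}$ for some index $c$ with $V_{\nu^*_{c,j}} \neq V_{\nu^*_{1,j}}$, and by the definition of $S_j$ there is a unique $b \in S_j$, necessarily with $b > 1$, such that $V_{\nu^*_{b,j}} = V_{\nu^*_{c,j}}$; consequently $Q_{b,j} = Q_{c,j} = \tau^{-1}(Q_{1,j})$. Therefore $\tau(Q_{b,j}^{n_j}) = Q_{1,j}^{n_j}$, and since $y_j \in Q_{b,j}^{n_j}$ by (\ref{eq7}) (transported from $T^s$ to $T$ via the uniqueness of the prime lying over $Q_{b,j}^s$), we conclude $\tau(y_j) \in Q_{1,j}^{n_j}$, the second conclusion.

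The only step that requires any care is checking that $\tau^{-1}(Q_{1,j})$ genuinely corresponds to one of the $Q_{b,j}$ with $b \in S_j$ rather than some other prime over $P_j$; this is handled by passing through the valuation ring side (where $\tau$ acts on the finite set of extensions of $\nu_j$) and then using the fact, recorded in Lemma \ref{Crit} after the reduction to $R'$, that distinct valuation rings give distinct primes on $T$, so that the representative in $S_j$ is well-defined.
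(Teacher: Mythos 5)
Your proposal is correct, and for the harder half ($k\le j-1$) it takes a genuinely more direct route than the paper (the case $k\ge j$ is the same one-line argument in both). The paper factors $\tau=\sigma_{i_k}^k\rho$ with $\rho=\sigma_{i_{k+1}}^{k+1}\cdots\sigma_{i_{r-1}}^{r-1}\in G_{k+1}$, first shows $\rho(y_j)\equiv 0$ modulo $Q_{i,j}^{n_j}$ for the primes $Q_{i,j}$ not containing $Q_{1,k+1}$ (via the statement that an element of $G_a$ permutes the primes containing $Q_{1,a}$ and permutes those not containing it), and then uses that $\sigma_{i_k}^k$ moves $Q_{1,k+1}$ -- which, via the non-containment conclusion of Lemma \ref{Crit}, keeps the displaced prime out of $Q_{1,j}$ -- to land the congruence at $Q_{1,j}$. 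You instead note once that $\tau\in\sigma_{i_k}^kG_{k+1}$, so $\tau\in G_k\setminus G_{k+1}$ and hence $\tau\notin G_j$ for $j\ge k+1$; then $\tau^{-1}(V_{\nu^*_{1,j}})$ is a conjugate composite valuation ring $V_{\nu^*_{c,j}}\ne V_{\nu^*_{1,j}}$, its center $\tau^{-1}(Q_{1,j})$ on $T$ equals $Q_{b,j}$ for the representative $b\in S_j$ with $b>1$, and the congruence $y_j\equiv 0\bmod Q_{b,j}^{n_j}$ pushes forward to $\tau(y_j)\equiv 0\bmod Q_{1,j}^{n_j}$. This buys a shorter argument that treats $\tau$ as a whole and does not need the auxiliary permutation statement, nor the non-containment part of Lemma \ref{Crit}, at this step. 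Two small inaccuracies in your write-up, neither affecting correctness: the transport of (\ref{eq7}) from $T^s$ to $T$ does not use uniqueness of the prime over $Q_{b,j}^s$ (the paper asserts uniqueness only over $Q_{1,j}^s$); it only needs $Q_{b,j}^s=Q_{b,j}\cap T^s\subset Q_{b,j}$, and indeed the transported congruences are already displayed in the paper just before the lemma. Similarly, the appeal to Lemma \ref{Crit} for ``distinct valuation rings give distinct primes'' is superfluous: the representative $b\in S_j$ with $V_{\nu^*_{b,j}}=\tau^{-1}(V_{\nu^*_{1,j}})$ exists by the definition of $S_j$, and equality of valuation rings already gives equality of centers.
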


\begin{proof}  If $k\ge j$, then $\tau\in G_k\subset G_j$ so $\tau(y_j)\equiv 1\mod (Q_{1,j})^{n_j}\mbox{ if }k\ge j$.

Now suppose that $k\le j-1$. If $\sigma\in G_a$ then $\sigma$ permutes the $Q_{i,b}$ with $a\le b$ which  contain $Q_{1,a}$, and permutes the $Q_{i,b}$ with $a\le b$ which do  not contain $Q_{i,a}$. We have that 
$\sigma_{i_{k+1}}^{k+1}\cdots\sigma_{i_{r-1}}^{r-1}\in G_{k+1}$ so 
$$
\sigma_{i_{k+1}}^{k+1}\cdots\sigma_{i_{r-1}}^{r-1}(y_j)\equiv 0\mod Q_{i,j}^{n_j}\mbox{ if } Q_{1,k+1}\not\subset Q_{i,j}.
$$
Since $\sigma_{i_k}^{k}(Q_{1,k+1})\ne Q_{1,k+1}$, we have that 
$$
\sigma_{i_k}^k\sigma_{i_{k+1}}^{k+1}\cdots\sigma_{i_{r-1}}^{r-1}(y_j)\equiv 0\mod Q_{1,j}^{n_j}.
$$
Thus $\tau(y_j)\equiv 0\mod (Q_{1,j})^{n_j}$.
\end{proof}

An element  $\sigma\in G$ is in $G_i$ if and only if the conjugate valuation $\nu^*_{1,i}\sigma=\nu^*_{1,i}$ (page 68, \cite{ZS2}). 
Thus for $0\ne f\in K^*$ and $\sigma\in G_i$, 
$$
\nu^*(\sigma(f))-\nu^*(f)\in \Phi_i^*.
$$
Since $Q_{1,i}=I_{1,i}^*\cap T$, and since the conclusions of Lemma \ref{Lemma10} are assumed to hold for $R=R'$, we have by (\ref{eq60}) that
$$
\nu^*(Q_{1,i})\in \Phi_{i-1}^*\setminus \Phi_i^*\mbox{ for }1\le i\le r,
$$
where 
$$
\nu^*(Q_{1,i})=\min\{\nu^*(g)\mid g\in Q_{1,i}\}.
$$
Suppose that $0\ne f\in T^s$.  
\begin{equation}\label{eq9}
\nu^*(y_1\cdots y_rf)=\nu^*(y_1)+\cdots+\nu^*(y_r)+\nu^*(f)=\nu^*(f)
\end{equation}
by (\ref{eq7}) and since $Q_{1,j}\subset Q_{1,r}$ for all $j$.
For  $0\le i\le r-1$, let
$$
\phi_i(f)=\min\{\nu^*(\sigma(f))-\nu^*(f)\mid \sigma\in G_i\}.
$$
We have that $\phi_i(f)\in \Phi_i^*$.
Let $n_i\in\NN$ be such that 
\begin{equation}\label{eq8}
n_{i+1}\nu^*(Q_{1,i+1})>-\phi_i(f)\mbox{ for }0\le i\le r-1.
\end{equation}
Suppose that $\tau$ is as in the assumptions of Lemma \ref{Lemma7} with $\tau\ne {\rm id}$. Then
\begin{equation}\label{eq10}
\begin{array}{lll}
\nu^*(\tau(y_1\cdots y_r f))-\nu^*(f)&=& [\nu^*(\tau(f))-\nu^*(f)]+\nu^*(\tau(y_1))+\cdots+\nu^*(\tau(y_r))\\
&\ge& \phi_k(f)+\nu^*(\tau(y_{k+1}))+\cdots+\nu^*(\tau(y_r))\\
&\ge& \phi_k(f)+n_{k+1}\nu^*(Q_{1,k+1})+\cdots+n_r\nu^*(Q_{1,r})\mbox{ by Lemma \ref{Lemma7}.}\\
&>&0\mbox{ by (\ref{eq8})}.
\end{array}
\end{equation}
We have that  ${\rm Tr}_{K^s/K}(y_1\cdots y_rf)\in R$ and
$$
\nu^*({\rm Tr}_{K^s/K}(y_1\cdots y_rf)-f)>\nu^*(f)
$$
by equations (\ref{eq9}) and (\ref{eq10}).

We thus have established the following proposition.

\begin{Proposition}\label{PropSplit} Suppose that $K^*$ is a Galois extension of a field $K$, $\nu$ is a valuation of $K$ and $\nu^*$ is an extension of $\nu$ to $K^*$. Suppose that $R$ is a normal excellent  local ring with quotient field $K$ which is dominated by $\nu$. Then there exists a birational extension $R'$ of $R$ such that $R'$ is a normal local ring which is dominated by $\nu$ and if $R''$ is a normal local ring with quotient field $K$ which birationally dominates $R'$ and is dominated by $\nu$ and  if $S$ is the local ring of the normalization of $R''$ in the fixed field $K^s$ of the  decomposition group $G^s(V_{\nu^*}/V_{\nu})$, then 
$$
{\rm gr}_{\nu^*}(S)={\rm gr}_{\nu}(R).
$$
\end{Proposition}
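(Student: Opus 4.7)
My plan is to assemble the construction given in the run-up to the statement. First, I apply Lemma \ref{Lemma10} and then Lemma \ref{Crit} to replace $R$ by a normal birational extension $R'$, dominated by $\nu$, whose defining properties (distinctness of the centers of the isolated subgroups on $R'$, and separation of the centers of the various $V_{\nu_{j,i}^*}$) are inherited by every normal $R''$ that is a birational extension of $R'$ dominated by $\nu$. For such an $R''$, Lemma \ref{Lemma8} identifies each abstract decomposition group $G_{j,i}$ with a decomposition group of prime ideals in the integral closure $C$ of $R''$ in $K^*$, which is exactly what the trace construction requires.

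Next, I invoke the trace construction produced in the paragraphs just before the statement. For any $0\neq f\in T^s$, the integral closure of $R''$ in $K^s$, Lemma \ref{Lemma6} and the Chinese remainder theorem yield elements $y_i\in T^s$ congruent to $1$ modulo a high power of $Q_{1,i}^s$ and to $0$ modulo high powers of $Q_{j,i}^s$ for the remaining $j\in S_i$; taking the exponents $n_i$ to satisfy \eqref{eq8} (large relative to $\phi_i(f)$), Lemma \ref{Lemma7} together with the coset decomposition of $G/G_r$ forces $\nu^*(\tau(y_1\cdots y_rf))>\nu^*(f)$ for every nontrivial coset representative $\tau$ of $G_r$ in $G$. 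Summing over the cosets yields $g:=\mathrm{Tr}_{K^s/K}(y_1\cdots y_rf)\in R''$ (it lies in $K$ and is integral over $R''$, hence in $R''$ by normality) with $\nu(g)=\nu^*(f)$ and $\nu^*(g-f)>\nu^*(f)$.

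To pass from $T^s$ to $S=T^s_{n_1}$ and conclude, given $f\in\mathcal P_\gamma(S)$ I write $f=a/b$ with $a,b\in T^s$ and $b\notin n_1$, so $\nu^*(b)=0$ and $\nu^*(a)=\gamma$. Applying the trace construction separately to $a$ and to $b$ produces $A,B\in R''$ with $\nu(A)=\gamma$, $\nu(B)=0$, $\nu^*(a-A)>\gamma$, and $\nu^*(b-B)>0$; since $B$ is a unit in $R''$, we have $A/B\in R''$ with $\nu(A/B)=\gamma$, and the identity $f-A/B=(B(a-A)+A(B-b))/(bB)$ forces $\nu^*(f-A/B)>\gamma$. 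This gives surjectivity of the natural map $\mathcal P_\gamma(R'')/\mathcal P^+_\gamma(R'')\to\mathcal P_\gamma(S)/\mathcal P^+_\gamma(S)$ at every degree $\gamma$; injectivity is automatic from $\nu^*|_K=\nu$. Hence $\mathrm{gr}_{\nu^*}(S)=\mathrm{gr}_\nu(R'')$, which is the proposition (with the role of $R$ in the statement played by $R''$).

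The main obstacle is the simultaneous control, across every rank $1\le i\le r$, of both the leading term of $f$ and of all its Galois conjugates under coset representatives of $G_r$ in $G$. Without the separation of centers ensured by Lemmas \ref{Lemma10} and \ref{Crit}, the elements $y_i$ could not be produced independently at each level via Lemma \ref{Lemma6}; and without the delicate calibration of the exponents $n_i$ against the worst-case value shift $\phi_k(f)$, the trace would fail to inherit the initial form of $f$. Once this preparation is in place, the extension from $T^s$ to the localization $S$ via the quotient trick is routine.
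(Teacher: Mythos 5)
Your proposal is correct and follows essentially the same route as the paper: the preparatory Lemmas \ref{Lemma10}, \ref{Crit}, \ref{Lemma8} and \ref{Lemma6}, the Chinese-remainder elements $y_i$ with exponents calibrated by (\ref{eq8}), and the trace over coset representatives of $G_r$ in $G$ controlled by Lemma \ref{Lemma7}, exactly as in the construction the paper gives just before the statement. The only addition is your explicit quotient trick passing from the integral closure $T^s$ to its localization $S$, a routine step the paper leaves implicit, and it is carried out correctly.
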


Let $R^*=T_m$ be the local ring of the integral closure $T$ of $R$ in $K^*$ which is dominated by $\nu^*$. If $A$ is  a local ring, we will write $k(A)=A/m_A$ to denote the residue field of $A$. 

If $F$ is a finite field extension of a field $E$, then $F_s$ will denote the separable closure of $E$ in $F$, and $[F:E]_s$ will denote the degree $[F_s:E]$. Recall that $G^s(R^*/R)=G^s(V_{\nu^*}/V_{\nu})$ by Lemma \ref{Lemma8} and our assumptions on $R$.

\begin{Lemma}\label{Lemma9}  There exists a normal local ring $R'$ which birationally dominates $R$, such that $\nu$ dominates $R'$, and if $R''$ is a normal local ring which birationally dominates $R'$ and is dominated by $\nu$, then
\begin{equation}\label{eq14}
G^i((R'')^*/R'')=G^i(V_{\nu^*}/V_{\nu}),
\end{equation}
where $(R'')^*$ is the local ring of the integral closure of $R''$ in $K^*$ which is dominated by $\nu^*$.

Let $k((R'')^*)_s$ be the separable closure of $k(R'')$ in $k((R'')^*)$. 
We further have that any basis of $k((R'')^*)_s$ over $k(R'')$ is a basis of $k(V_{\nu^*})_s$ over $k(V_{\nu})$; in particular, $k((R'')^*)_s$ and $k(V_{\nu})$ are linearly disjoint over $k(R'')$.
\end{Lemma}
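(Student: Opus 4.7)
My plan is to exhibit, after a birational extension, a primitive element of $k(V_{\nu^*})_s$ over $k(V_{\nu})$ that lifts into $(R'')^*$ for every further birational extension $R''$, and then use a counting argument via decomposition and inertia groups to force the desired equality.

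Since $[k(V_{\nu^*}):k(V_{\nu})]\le [K^*:K]<\infty$, the separable subextension $k(V_{\nu^*})_s/k(V_{\nu})$ is finite Galois of some degree $d$. First I would fix a primitive element $\theta\in k(V_{\nu^*})_s$ with minimal polynomial $\bar g(y)\in k(V_{\nu})[y]$, and lift it to an element $\tilde\theta\in V_{\nu^*}$. Using the approximation theorem for the pairwise distinct extensions $\nu_1^*,\ldots,\nu_a^*$ of $\nu$ to $K^*$, I can arrange that $\tilde\theta$ lies in the integral closure $A=\bigcap_j V_{\nu_j^*}$ of $V_{\nu}$ in $K^*$ while preserving its residue $\theta$; then $\tilde\theta$ is integral over $V_{\nu}$ with minimal polynomial over $K$ in $V_{\nu}[y]$. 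After lifting $\bar g$ to $g\in V_{\nu}[y]$, I perform a birational extension $R\to R'$ so that all coefficients of $g$ and of the minimal polynomial of $\tilde\theta$ lie in $R'$. Then $\tilde\theta\in (R')^*\subset (R'')^*$ for every normal local ring $R''$ birationally dominating $R'$ and dominated by $\nu$, and the coefficients of $g$ and $\bar g$ lie in $R''$ and $k(R'')$ respectively.

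For such $R''$, the polynomial $\bar g\in k(R'')[y]$ is still irreducible (any factorization over $k(R'')$ would persist over the larger field $k(V_{\nu})\supset k(R'')$), and $\bar g(\bar{\tilde\theta})=0$ in $k((R'')^*)\hookrightarrow k(V_{\nu^*})$, where $\bar{\tilde\theta}$ is identified with $\theta$. Hence $k(R'')(\bar{\tilde\theta})\subset k((R'')^*)_s$ has degree $d$ over $k(R'')$, giving $[k((R'')^*)_s:k(R'')]\ge d$. For the reverse inequality, Lemma \ref{Lemma8} (applied under the standing assumption that the conclusions of Lemma \ref{Crit} hold for $R$) gives $G^s((R'')^*/R'')=G^s(V_{\nu^*}/V_{\nu})$, while the inclusion $G^i(V_{\nu^*}/V_{\nu})\subset G^i((R'')^*/R'')$ is automatic, since an element of $G^s$ acting trivially on $k(V_{\nu^*})$ acts trivially on its subfield $k((R'')^*)$. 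Because $G^s/G^i$ is the Galois group of the corresponding separable residue extension,
$$
[k((R'')^*)_s:k(R'')]=\frac{|G^s|}{|G^i((R'')^*/R'')|}\le \frac{|G^s|}{|G^i(V_{\nu^*}/V_{\nu})|}=d.
$$
Thus $[k((R'')^*)_s:k(R'')]=d$ and $|G^i((R'')^*/R'')|=|G^i(V_{\nu^*}/V_{\nu})|$, which combined with the inclusion establishes (\ref{eq14}).

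For the basis assertion, any $k(R'')$-basis $\alpha_1,\ldots,\alpha_d$ of $k((R'')^*)_s$ spans $k((R'')^*)_s\cdot k(V_{\nu})$ over $k(V_{\nu})$; this compositum contains $\theta$, so equals $k(V_{\nu})(\theta)=k(V_{\nu^*})_s$, which has $k(V_{\nu})$-dimension $d$. A spanning family of size $d$ in a $d$-dimensional space is a basis, which also yields the linear disjointness of $k((R'')^*)_s$ and $k(V_{\nu})$ over $k(R'')$. The main technical hurdle is producing $\tilde\theta$ in the integral closure $A$: a naive lift from $V_{\nu^*}$ need not be integral over $V_{\nu}$, so the approximation theorem is essential to put us in a setting where $\tilde\theta$ can be pushed into a finite birational extension of $R$.
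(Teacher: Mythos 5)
Your proof is correct and follows essentially the same route as the paper: lift generators of the separable part of the residue extension into the integral closure $A$ of $V_{\nu}$ in $K^*$, adjoin the needed coefficients to $R$ to form $R'$, and then combine the lower bound on $[k((R'')^*)_s:k(R'')]$ with Lemma \ref{Lemma8}, the inclusion $G^i(V_{\nu^*}/V_{\nu})\subset G^i((R'')^*/R'')$, and the identification of the residue automorphism group with $G^s/G^i$ to force equality of the inertia groups and the basis statement. The only cosmetic differences are your use of a primitive element instead of a residue basis and the approximation step, which is actually unnecessary since $A/(m_{\nu^*}\cap A)=V_{\nu^*}/m_{\nu^*}$ allows one to lift directly into $A$.
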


\begin{proof} The residue field $k(V_{\nu^*})$ is finite over $k(V_{\nu})$ by Corollary 2 on page 26 of \cite{ZS2}. Let $k(V_{\nu^*})_s$ be the separable closure of $k(V_{\nu})$ in $k(V_{\nu^*})$. Let $\overline t_1,\ldots,\overline t_h\in k(V_{\nu^*})_s$ be a $k(V_{\nu})$-basis. For $1\le i\le h$, let
$$
f_i(x)=x^{m_i}+\overline a_{1,i}x^{m_i-1}+\cdots+\overline a_{m_i,i}\in k(V_\nu)[x]
$$
be the minimal polynomial of $\overline t_i$ over $k(V_{\nu})$. Each $f_i(x)$ is a separable polynomial. 

Let $A=\cap_jV_{\nu_j^*}$ be the integral closure of $V_{\nu}$ in $K^*$. Let $t_1,\ldots,t_h$ be lifts of the $\overline t_i$ to $A$. Let 
$$
g_i(x)=x^{n_i}+b_{1,i}x^{n_i-1}+\cdots+b_{n_i,i}\in V_{\nu}[x]
$$
be such that $g_i(t_i)=0$ for $1\le  i\le h$. Let $a_{j,i}$ be lifts of the $\overline a_{j,i}$ to $V_{\nu}$ and let $B$ be the integral closure of 
$$
R[\{a_{j,i}\mid  1\le i\le h, 1\le j\le m_i\},\{b_{j,i}\mid 1\le i\le h, 1\le j\le n_i\}]
$$
in $K$ and let $R'=B_{B\cap m_{\nu}}$. Suppose that $R''$ is a normal local ring which birationally dominates $R'$ and is dominated by $\nu$. Let $(R'')^*$ be the local ring of the integral closure of $R''$ in $K^*$ which is dominated by $\nu^*$.

Now $t_i\in (R'')^*$ for $1\le i\le h$ and $f_i(x)\in k(R'')[x]$ for $1\le  i\le h$ since $R''$ dominates $R'$. Thus $\overline t_i\in k((R'')^*)$ is separable over $k(R'')$ for $1\le i\le h$ since $f_i(x)$ is a separable polynomial, and so
\begin{equation}\label{eq13}
h\le [k((R'')^*):k(R'')]_s.
\end{equation}
We have that $G^s((R'')^*/R'')=G^s(V_{\nu^*}/V_{\nu})$ (by Lemma \ref{Lemma8}), and $G^i(V_{\nu^*}/V_{\nu})\subset G^i((R'')^*/R'')$ by Proposition 1.50 \cite{RTM}.
By Theorem 1.48 \cite{RTM}, we have that $k((R'')^*)$ is a normal extension of $k(R'')$ and $k(V_{\nu^*})$ is a normal extension of $k(V_{\nu})$, with automorphism groups
$$
{\rm Aut}(k((R'')^*)/k(R''))\cong G^s((R'')^*/R'')/G^i((R'')^*/R'')
$$
and
$$
{\rm Aut}(k(V_{\nu^*})/k(V_{\nu}))\cong G^s(V_{\nu^*}/V_{\nu})/G^i(V_{\nu^*}/V_{\nu}).
$$
We then have a natural short exact sequence of groups
$$
0\rightarrow G^i((R'')^*/R'')/G^i(V_{\nu^*}/V_{\nu})\rightarrow {\rm Aut}(k(V_{\nu^*})/k(V_{\nu}))\rightarrow
{\rm Aut}(k((R'')^*)/k(R''))\rightarrow 0.
$$
Now $|{\rm Aut}(k(V_{\nu^*})/k(V_{\nu})|=h$ and 
$$
h\le [k((R'')^*:k(R'')]_s=|{\rm Aut}(k((R'')^*)/k(R''))|
$$
by (\ref{eq13}), so $G^i((R'')^*/R'')=G^i(V_{\nu^*}/V_{\nu})$. We further have that the basis $\overline t_1,\ldots,\overline t_h$ of $k(V_{\nu^*})_s$ over $k(V_{\nu})$ is a basis of $k((R'')^*)_s$ over $k(R'')$. Thus any basis of $k((R'')^*)_s$ over $k(R'')$ is a basis of $k(V_{\nu^*})_s$ over $k(V_{\nu})$.
\end{proof}

\begin{Proposition}\label{PropInert} Suppose that $K^*$ is a Galois extension of a field $K$, $\nu$ is a valuation of $K$ and $\nu^*$ is an extension of $\nu$ to $K^*$. Suppose that $R$ is a normal local ring with quotient field $K$ which is dominated by $\nu$. Then there exists a birational extension $R'$ of $R$ such that $R'$ is a normal local ring which is dominated by $\nu$ and if $R''$ is a normal local ring with quotient field $K$ which birationally dominates $R'$ and is dominated by $\nu$, and  if $R^s$ is the local ring of the normalization of $R''$ in the fixed field $K^s$ of the  decomposition group $G^s(V_{\nu^*}/V_{\nu})$ which is dominated by $\nu^*$ and  $R^i$ is the local ring of the normalization of $R''$ in the fixed field $K^i$ of the  inertia group $G^i(V_{\nu^*}/V_{\nu})$ which is dominated by $\nu^*$, then  we have a natural isomorphism of graded rings
$$
{\rm gr}_{\nu^*}(R^i)\cong{\rm gr}_{\nu^*}(R^s)\otimes_{k(R^s)}k(R^i).
$$
\end{Proposition}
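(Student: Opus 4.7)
The plan is to combine the ramification-theoretic preparation of the earlier lemmas with a single application of Proposition~\ref{PropUnR} to the extension $R^s \subset R^i$. First I would pass to a birational extension $R'$ of $R$ for which the conclusions of Lemmas~\ref{Lemma10}, \ref{Crit}, \ref{Lemma8}, \ref{Lemma9} and Proposition~\ref{PropSplit} all apply to any further normal birational extension $R''$ dominated by $\nu$. For such an $R''$, let $R^* = T_m$, where $T$ is the integral closure of $R''$ in $K^*$ and $m = T \cap m_{\nu^*}$. Lemmas~\ref{Lemma8} and~\ref{Lemma9} identify the decomposition and inertia groups at the ring level with those at the valuation level:
\[
G^s(R^*/R'') = G^s(V_{\nu^*}/V_{\nu}), \qquad G^i(R^*/R'') = G^i(V_{\nu^*}/V_{\nu}).
\]

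Because the ring-level and valuation-level inertia groups now coincide, the ring $R^i$ defined via the valuation-level fixed field $K^i$ is exactly the subring of $R^*$ fixed by the ring-level inertia group of $R^*$ over $R^s$. Classical local ramification theory therefore shows that $R^s \to R^i$ is unramified in the sense required by Proposition~\ref{PropUnR}: $m_{R^s} R^i = m_{R^i}$ and $k(R^i)/k(R^s)$ is separable, in fact Galois with group $G^s/G^i$, agreeing with the separable part of $k(V_{\nu^*})/k(V_{\nu})$ by Lemma~\ref{Lemma9}. Moreover $K^i/K^s$ is finite separable, being a Galois subextension of $K^*/K^s$, and $R^s$ is normal and excellent since $R''$ is.

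With these reductions I would invoke Proposition~\ref{PropUnR} applied to the extension $R^s \subset R^i$ and the valuation $\nu^*|_{K^i}$. This yields a birational extension $(R^s)_1$ of $R^s$ such that for any further normal birational extension $(R^s)''$ of $(R^s)_1$ dominated by $\nu^*|_{K^s}$, the corresponding $(R^i)''$ lying over $(R^s)''$ in $K^i$ is unramified over $(R^s)''$ and satisfies
\[
{\rm gr}_{\nu^*}\bigl((R^i)''\bigr) \cong {\rm gr}_{\nu^*}\bigl((R^s)''\bigr) \otimes_{k((R^s)'')} k\bigl((R^i)''\bigr).
\]
Writing finitely many generators of $(R^s)_1$ over $R^s$ as quotients of elements integral over $R''$, and adjoining the relevant elements and denominators to $R''$ inside $K$, produces a birational extension $R_1$ of $R''$ whose normalization in $K^s$ has local ring at the center of $\nu^*|_{K^s}$ dominating $(R^s)_1$. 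Replacing the original $R'$ by this $R_1$ and noting the identifications ${\rm gr}_{\nu^*}(R^s) = {\rm gr}_{\nu^*|_{K^s}}(R^s)$ and ${\rm gr}_{\nu^*}(R^i) = {\rm gr}_{\nu^*|_{K^i}}(R^i)$, the desired isomorphism follows for every further $R''$.

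The main obstacle is the descent step: Proposition~\ref{PropUnR} furnishes a birational extension inside $K^s$, whereas Proposition~\ref{PropInert} requires a birational extension inside $K$ whose conclusion is stable under arbitrary further normal birational extensions dominated by $\nu$. Carefully verifying that birational extensions of $R$ are cofinal, in the appropriate sense, among birational extensions of $R^s$, and checking that the unramifiedness of $R^s \to R^i$ truly holds at the level of local rings (and not merely at the valuation-ring level) by exploiting the equality $G^i(R^*/R'') = G^i(V_{\nu^*}/V_{\nu})$ from Lemma~\ref{Lemma9}, is where the technical work of the proof is concentrated.
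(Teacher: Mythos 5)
There is a genuine gap, and it is one of logical circularity rather than a local technical slip. Your argument reduces everything to a single invocation of Proposition~\ref{PropUnR} applied to $R^s\subset R^i$. But in the paper Proposition~\ref{PropUnR} is proved in Section~\ref{SecUnR}, \emph{after} Proposition~\ref{PropInert}, and its proof explicitly says ``the proof of Proposition~\ref{PropInert} shows that ${\rm gr}_{\nu^*}(S^s)\cong {\rm gr}_{\nu^*}(R^s)\otimes_{R/m_R}S/m_S$''; that is, the unramified-extension statement is obtained precisely by running the argument of Proposition~\ref{PropInert} on the splitting-field ring. So using Proposition~\ref{PropUnR} as a black box here assumes exactly the statement you are asked to prove. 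To make your route non-circular you would have to supply an independent proof of the unramified case, and that independent proof is the missing mathematical core: after the preparation you correctly describe ($G^s(R^*/R'')=G^s(V_{\nu^*}/V_\nu)$, $G^i(R^*/R'')=G^i(V_{\nu^*}/V_\nu)$, hence $R^s\to R^i$ unramified, $k(R^s)=k(R'')$, $k(V_{\nu^s})=k(V_\nu)$, $k(R^i)=k(R^*)_s$), one lifts a $k(R^s)$-basis $\overline a_1,\ldots,\overline a_n$ of $k(R^i)$ to $a_1,\ldots,a_n\in R^i$, uses $m_{R^s}R^i=m_{R^i}$, finiteness of $R^i$ over $R^s$ (Proposition 1.46 of the ramification reference) and Nakayama to get $R^i=a_1R^s+\cdots+a_nR^s$, and then — this is where Lemma~\ref{Lemma9} is really needed — uses that the $\overline a_i$ remain linearly independent over $k(V_{\nu^s})=k(V_\nu)$ to show that for $f=\sum a_ig_i$ with $g_i\in R^s$ one has $\nu^*(f)=\min_i\nu^s(g_i)$ and $\nu^*(f-\sum_{\nu^s(g_i)\ \mathrm{minimal}}a_ig_i)>\nu^*(f)$. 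This value computation gives $S^{R^s}(\nu^s)=S^{R^i}(\nu^i)$ and the graded-piece isomorphism $\mathcal P_\gamma(R^i)/\mathcal P^+_\gamma(R^i)\cong[\mathcal P_\gamma(R^s)/\mathcal P^+_\gamma(R^s)]\otimes_{k(R^s)}k(R^i)$, which is the whole content of the proposition. None of this appears in your proposal; it is hidden inside the circular citation.

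A secondary point: even granting Proposition~\ref{PropUnR}, your descent step (replacing a birational extension of $R^s$ inside $K^s$ by one of $R$ inside $K$, in the style of Lemma~\ref{Lemma4}) is extra machinery the paper never needs. Once $R''$ satisfies the conclusions of Lemmas~\ref{Lemma10}, \ref{Crit} and \ref{Lemma9}, the isomorphism holds for that $R''$ directly, with no further blowing up inside $K^s$; so the correct proof is both shorter and avoids the cofinality verification you flag as the technical difficulty.
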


\begin{proof} Let $R'$ be  a birational extension of $R$ which is a normal local ring and is dominated by $\nu$ and satisfies the conclusions of Lemmas \ref{Lemma10}, \ref{Crit} (which implies Lemma \ref{Lemma8}) and \ref{Lemma9}. Let $R''$ be a normal local ring which is a birational extension of $R'$ and is dominated by $\nu$. Without loss of generality, we may assume that $R$ satisfies the conclusions of Lemmas \ref{Lemma10}, \ref{Crit} and \ref{Lemma9} and $R=R''$. In particular, we have 
\begin{equation}\label{eq20}
G^s(R^*/R)\cong G^s(V_{\nu^*}/V_{\nu})\mbox{ and }G^i(R^*/R)\cong G^i(V_{\nu^*}/V_{\nu}).
\end{equation}
Let $\nu^i$ be the restriction of $\nu^*$ to the inertia field $K^i=K^{G^i(V_{\nu^*}/V_{\nu})}$, and let $\nu^s$ be the restriction of $\nu^*$ to the splitting field $K^s=K^{G^s(V_{\nu^*}/V_{\nu})}$. By Theorem 1.48 \cite{RTM} and (\ref{eq20}), we have that $R^s\rightarrow R^i$ is unramified, $k(R^i)=k(R^*)_s$ and $k(V_{\nu^i})=k(V_{\nu^*})_s$. By Theorem 1.47 \cite{RTM} and (\ref{eq20}), $k(V_{\nu^s})=k(V_{\nu})$ and $k(R^s)=k(R)$. 
Let $\overline a_1,\ldots,\overline a_n$ be a basis of $k(R^i)$ over $k(R^s)$. Then $\overline a_1,\ldots,\overline a_n$ is a basis of $k(V_{\nu^i})$ over $k(V_{\nu^s})$ by Lemma \ref{Lemma9}. Let $a_1,\ldots,a_n\in R^i$ be lifts of $\overline a_1,\ldots,\overline a_n$. The  extension $R^s\rightarrow R^i$ is unramified,  so $m_{R^s}R^i=m_{R^i}$. Thus 
$$
a_1R^s+\cdots+a_nR^s+m_{R^s}R^i=R^i.
$$
Now $R^i$ is the unique local ring of $K^i$ lying over $R^s$ by Proposition 1.46 \cite{RTM}, so $R^i$ is the integral closure of the excellent local ring $R^s$ in $K^i$, and so $R^i$ is a finitely generated $R^s$-module.
Thus $R^i=a_1R^s+\cdots+a_nR^s$ by Nakayama's lemma. Let $0\ne f\in R^i$. Then $f=a_1g_1+\cdot+a_ng_n$ with $g_1,\ldots,g_n\in R^s$. Let $\lambda=\min\{\nu^s(g_i)\}$. After possibly reindexing the $g_i$, we may assume that $\nu(g_i)=\lambda$ for $1\le i\le r$ and $\nu(g_i)>\lambda$ if $r<i\le n$. Let $b_i$ be the residue of $\frac{g_i}{g_1}$ in $k(V_{\nu^s})$ for $1\le i\le r$, which are necessarily all nonzero, so we have that
$\sum_{i=1}^r\overline a_ib_i\ne 0$ in $k(V_{\nu^i})$, since $\overline a_1,\ldots,\overline a_r$ are linearly independent over $k(V_{\nu})=k(V_{\nu^s})$. Thus
$$
\nu^*\left(\sum_{i=1}^ra_i\frac{g_i}{g_1}\right)=0,
$$
so that 
$$
\nu^*(f)=\nu^*(\sum_{i=1}^ra_ig_i)=\nu^*(g_1)=\cdots=\nu^*(g_r)
$$
and
$$
\nu^*(f-a_1g_1-\cdots -a_rg_r)>\nu^*(f).
$$
Thus we have equality of semigroups $S^{R^s}(\nu^s)=S^{R^i}(\nu^i)$ and for all $\gamma\in S^{R^s}(\nu^s)$ we have a natural isomorphism
$$
\mathcal P_{\gamma}(R^i)/\mathcal P_{\gamma}^+(R^i)\cong [\mathcal P_{\gamma}(R^s)/\mathcal P_{\gamma}^+(R^s)]\otimes_{k(R^s)}k(R^i),
$$
showing that
$$
{\rm gr}_{\nu^i}(R^i)\cong {\rm gr}_{\nu^s}(R^s)\otimes_{k(R^s)}k(R^i).
$$
\end{proof}

\section{Some basic results on ramification}
In this section we  extract some results from Abhyankar's paper \cite{LU}.
 Suppose that $K$ is a field  and $K'$ is a finite separable extension of $K$.  Suppose that $R$ is a normal, excellent  local ring with quotient field   $K$ and $R'$ is a normal local ring of $K'$ which lies over $R$. Let $E$ be the quotient field of $\hat R$ and $E'$ be the quotient field of $\hat R'$. Define
 $$
 d(R':R)=[E':E],\,\, g(R':R)=[R'/m_{R'}:R/m_R]_s\mbox{ and }r(R':R)=d(R':R)/g(R':R).
 $$
 We have that $d(R':R), g(R':R), r(R':R)$ are all multiplicative in towers of fields.

 Now suppose that $K^*$ is a finite Galois extension of $K$ and that $R^*$ is a normal local ring of $K^*$ which lies over $R$.  Let $R^s=R^*\cap K^s$ where $K^s=(K^*)^{G^s(R^*/R)}$ is the splitting field of $R^*$ over $R$.  Let $R^i=R^*\cap K^i$ where $K^i$ is  the inertia field $K^i=(K^*)^{G^i(R^*/R)}$ of $R^*$ over $R$. Let $E, E^s, E^i, E^*$ be the respective quotient fields of the complete local rings $\hat R, \widehat{R^s}, \widehat{R^i}$ and $\widehat{R^*}$.

 \begin{Proposition}\label{PropLG1}  Let $R$ be a normal excellent local  ring. Let $K$ be the quotient field of $R$. Let  $K^*$ be a finite separable extension of $K$ and let  $n=[K^*:K]$. Let $x\in R$ be a primitive element of $K^*$ over $K$, with minimal polynomial $f(t)\in R[t]$ (such an $x$ exists by Theorem 4, page 260 \cite{ZS1}). 
 Let $S_i$ for $1\le i\le h$ be the local rings in $K^*$ lying over $R$ and $S$ be the integral closure of $R$ in $K^*$. Then $\hat R$ and $\hat S_i$ are  normal local domains, the natural homomorphisms $\hat R\rightarrow \hat S_i$
 are injective for all $i$ and we have a natural isomorphism
 $$
 S\otimes_R\hat R\cong \oplus_{i=1}^h \hat S_i.
 $$
 Let $E$ be the quotient field of $\hat R$ and $E_i$ be the quotient field of $\hat S_i$ for $1\le i\le h$. Let $e_i=[E_i:E]$. Then $n=e_1+\cdots+e_h$. 
 Further, $x$ is a primitive element of $E_i$ over $E$ for all $i$ with minimal polynomial $f_i(t)\in E[t]$  and  there is a factorization 
 $f(t)=f_1(t)\cdots f_h(t)$ in $E[t]$.
 \end{Proposition}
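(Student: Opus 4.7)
The plan is to decompose $S\otimes_R\hat R$ via the semilocal structure of $S$, pass to the total ring of fractions to obtain the factorization of $f$, and compare dimensions to get $n=\sum e_i$. I would proceed in three steps.

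\textbf{Normality and the product decomposition.} Since $R$ is excellent and normal, its completion $\hat R$ is a normal local domain by Scholie IV.7.8.3 \cite{EGAIV}, and, again by excellence, the integral closure $S$ is a finite $R$-module. Each localization $S_i = S_{m_i}$ is therefore a normal excellent local ring, so each $\hat S_i$ is a normal local domain. The ring $S$ is semilocal Noetherian with Jacobson radical $J=m_1\cap\cdots\cap m_h$; because $S/m_RS$ is a finite $(R/m_R)$-algebra it is Artinian, so $J^N\subseteq m_RS$ for some $N$, which makes the $m_R$-adic and $J$-adic topologies on $S$ equivalent. The standard structure theorem for semilocal Noetherian rings then yields
$$S\otimes_R\hat R \;=\; \widehat{S}^{\,m_R} \;\cong\; \widehat{S}^{\,J} \;\cong\; \prod_{i=1}^h\widehat{S_{m_i}} \;=\; \prod_{i=1}^h\hat S_i.$$

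\textbf{Injectivity of $\hat R\to\hat S_i$.} As an $R$-submodule of the field $K^*$, the ring $S$ is $R$-torsion-free, and choosing a $K$-basis of $K^*$ embeds $S$ into $K^n$ as an $R$-module. Tensoring with the flat $R$-algebra $\hat R$ yields an embedding $S\otimes_R\hat R\hookrightarrow E^n$, so $S\otimes_R\hat R$ is $\hat R$-torsion-free; each direct summand $\hat S_i$ is therefore $\hat R$-torsion-free. Since $\hat S_i$ is a nonzero local ring, the kernel of $\hat R\to\hat S_i$ would have to annihilate $1\in\hat S_i$, and hence must be zero.

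\textbf{Factorization of $f$ and the rank formula.} Tensoring the decomposition over $\hat R$ with $E$ gives
$$E[t]/(f(t))\;\cong\; K^*\otimes_K E\;\cong\; S\otimes_R E\;\cong\; \prod_{i=1}^h \bigl(\hat S_i\otimes_{\hat R}E\bigr),$$
where the first isomorphism uses $K^*=K[x]\cong K[t]/(f(t))$. Each $\hat S_i\otimes_{\hat R}E$ is a finite integral-domain $E$-algebra, hence a field; it contains $\hat S_i$ and is contained in $E_i=\mathrm{QF}(\hat S_i)$, and so equals $E_i$. The Chinese Remainder decomposition of $E[t]/(f(t))\cong\prod_i E_i$ then produces pairwise coprime irreducible factors $f_i(t)\in E[t]$ with $E[t]/(f_i)\cong E_i$ sending $t\mapsto x$, exhibiting $x$ as a primitive element of $E_i/E$ with minimal polynomial $f_i$; comparing degrees gives $n=\deg f=\sum\deg f_i=\sum e_i$. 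The principal subtlety lies in the injectivity step, where torsion-freeness of $\hat S_i$ over $\hat R$ is essential to prevent a summand from being killed by $\hat R$; the rest of the argument is bookkeeping once the semilocal decomposition is in hand.
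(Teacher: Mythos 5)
Your proof is correct, and it follows the same global strategy as the paper: decompose $S\otimes_R\hat R\cong \prod_{i=1}^h\hat S_i$ and then compare with $K^*\otimes_K E\cong E[t]/(f(t))$ to read off the factorization and the degree count. The differences are in how the two intermediate steps are carried out. For the decomposition, the paper simply cites Theorem 16, page 277 and Corollary 2, page 283 of \cite{ZS2}, whereas you reprove it by hand via the equivalence of the $m_R$-adic and Jacobson-radical-adic topologies on the semilocal ring $S$; this is the same content, made self-contained. For the identification of the factors with the fields $E_i$, the paper computes the total quotient ring: it shows $K^*\otimes_KE\subseteq {\rm QR}(S\otimes_R\hat R)$ by writing elements as fractions $a/b$ with $b\in\hat R\setminus\{0\}$ a nonzerodivisor on $S\otimes_R\hat R$, and then uses reducedness together with separability of $f$ to get ${\rm QR}(S\otimes_R\hat R)=K^*\otimes_KE\cong \oplus_i E[t]/(f_i(t))\cong\oplus_i E_i$. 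You instead apply $-\otimes_{\hat R}E$ to the decomposition and observe that each $\hat S_i\otimes_{\hat R}E$ is a finite domain over the field $E$, hence a field, hence equal to $E_i$; this trades the total-quotient-ring bookkeeping for the finiteness of $\hat S_i$ over $\hat R$ (immediate from the decomposition, since $S$ is a finite $R$-module), and it has the small advantage of making the injectivity of $\hat R\rightarrow\hat S_i$ explicit via torsion-freeness, a point the paper leaves implicit. Two minor points you should make explicit: the nonzero elements of $\hat R$ are nonzerodivisors on $\hat S_i$ (so that $\hat S_i\otimes_{\hat R}E$ is indeed a domain sitting inside $E_i$), which is exactly your injectivity step; and when you invoke the Chinese Remainder decomposition you are using that $f$ is squarefree in $E[t]$, which follows from separability of $K^*$ over $K$ (or, equivalently, from the uniqueness of the decomposition of the Artinian ring $E[t]/(f(t))$ into fields), as the paper notes.
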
 
 
 \begin{proof} We have that $\hat R$ is a normal local domain since $R$ is normal and excellent. By Theorem 16, page 277 \cite{ZS2} and Corollary 2, page 283 \cite{ZS2}, we have a natural isomorphism
 $$
  S\otimes_R\hat R\cong \oplus \hat S_i.
 $$
  We have that the $\hat S_i$ are normal local domains since the $S_i$ are normal and excellent.
    We have that 
  $$
  K^*\otimes_KE\cong E[x]\cong E[t]/(f(t)).
  $$
  
  Let $A$ be a ring. The total quotient ring of $A$ is ${\rm QR}(A)=S^{-1}A$ where $S$ is the multiplicative set of all non zero divisors  of $A$. Let $g\in K^*\otimes_KE=E[x]$. Then $g=\frac{a}{b}$ where $a\in \hat R[x]$ and $b\in \hat R\setminus \{0\}$. Since $\hat R$ is a domain, $b$ is not a zero divisor in $S\otimes_R\hat R$ by Theorem 16, page 277 \cite{ZS2}. 
 Thus $K^*\otimes_KE\subset {\rm QR}(S\otimes_R\hat R)$.  Since the reduced ring $S\otimes_R\hat R$ is naturally a subring of ${\rm QR}(S\otimes_R\hat R)$, we have a natural inclusion $S\otimes_R\hat R\subset K^*\otimes_KE$.
  Now $f(t)$ is reduced in $E[t]$ since $K^*$ is separable over $K$. We have that $K^*\otimes_KE\cong \oplus E[t]/(f_i(t))$ is a direct sum of fields, where $f_i(t)$ are the irreducible factors of $f(t)$ in $E[t]$ by the Chinese remainder theorem.  Thus ${\rm QR}(S\otimes_R\hat R)=K^*\otimes_KE$. Now $S\otimes_R\hat R$ is reduced, so ${\rm QR}(S\otimes_R\hat R)\cong \oplus_iE_i$. Thus after reindexing, we have that $E_i\cong E[t]/(f_i(t))$ and we have that
  $$
  \sum [E_i:E]=\sum \deg (f_i)=\deg (f) =[K^*:K].
  $$
 \end{proof}

 Since $R^*$ is the unique local ring of $K^*$ which dominates $R^s$ by Proposition 1.46 \cite{RTM}, we have by Proposition \ref{PropLG1} that 
  \begin{equation}\label{eqLG5}
 [K^*:K^s]=d(R^*:R^s)\mbox{ and } [K^*:K^i]=d(R^*:R^i).
 \end{equation}
 By Proposition \ref{PropLG1} and Theorem 1.48 \cite{RTM},  we have that
 \begin{equation}\label{eqLG6}
 [K^i:K^s]=d(R^i:R^s)=g(R^*:R).
 \end{equation}
 We have that $\hat R^s\cong\hat R$ by  Theorem 1.47 \cite{RTM}  and Theorem 30.6 \cite{N}, and so       $E^s\cong E$. Thus
 \begin{equation}\label{eqLG7}
 d(R^s:R)=1.
 \end{equation}
 
 \begin{Lemma}\label{LemmaLG8} Let notations be as above. Then $r(R':R)$ is a positive integer. 
 \end{Lemma}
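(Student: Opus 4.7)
The plan is to reduce to the Galois case by multiplicativity of $r$ in towers. Choose a Galois closure $K^*$ of $K'$ over $K$ (which exists since $K'/K$ is separable), and pick a normal local ring $R^*$ of $K^*$ lying over $R'$; then $R^*$ also lies over $R$, and $R'$ is normal excellent (it is a finite $R$-module since $R$ is excellent). Multiplicativity of $r$ gives
$$
r(R^*:R)=r(R^*:R')\cdot r(R':R),
$$
so it will suffice to identify both $r(R^*:R)$ and $r(R^*:R')$ as explicit positive integers with the latter dividing the former.

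First I handle the Galois case for $K^*/K$. Applied to the tower $R\subset R^s\subset R^i\subset R^*$, multiplicativity of $d$ together with (\ref{eqLG5}), (\ref{eqLG6}), and (\ref{eqLG7}) gives
$$
d(R^*:R)=d(R^s:R)\cdot d(R^i:R^s)\cdot d(R^*:R^i)=1\cdot g(R^*:R)\cdot [K^*:K^i],
$$
so $r(R^*:R)=d(R^*:R)/g(R^*:R)=[K^*:K^i]$, a positive integer. Since $K'$ is an intermediate field of the Galois extension $K^*/K$, the extension $K^*/K'$ is itself Galois, and the identical calculation applied to $R'\subset R^*$ yields $r(R^*:R')=[K^*:K^{i'}]$, where $K^{i'}=(K^*)^{G^i(R^*/R')}$ is the inertia field of $R^*$ over $R'$.

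Finally, observe that $G^i(R^*/R')$ consists of those $\sigma\in G(K^*/K')$ which stabilize $m_{R^*}$ and act trivially on $k(R^*)$; since these are precisely the elements of $G^i(R^*/R)$ that also fix $K'$, we have $G^i(R^*/R')=G^i(R^*/R)\cap G(K^*/K')\subset G^i(R^*/R)$. Passing to fixed fields reverses inclusion, so $K^i\subset K^{i'}\subset K^*$, and therefore $[K^*:K^{i'}]$ divides $[K^*:K^i]$ with
$$
r(R':R)=\frac{r(R^*:R)}{r(R^*:R')}=\frac{[K^*:K^i]}{[K^*:K^{i'}]}=[K^{i'}:K^i]\in\NN.
$$
The only non-formal ingredient is the Galois-case computation of $r(R^*:R)$, and that is essentially immediate from (\ref{eqLG5})--(\ref{eqLG7}), which have already been established; the rest of the argument is a standard Galois-theoretic comparison of inertia subgroups.
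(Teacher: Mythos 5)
Your proof is correct and takes essentially the same route as the paper: pass to a Galois closure, use $G^{s}(R^*/R')=G^{s}(R^*/R)\cap G(K^*/K')$ and $G^{i}(R^*/R')=G^{i}(R^*/R)\cap G(K^*/K')$, and combine the formulas (\ref{eqLG5})--(\ref{eqLG7}) to identify $r(R':R)$ with a degree of field extensions. Your bookkeeping via multiplicativity of $r$ (rather than of $d$ and $g$ through the diagram of completions) gives $r(R':R)=[(K')^i:K^i]$, which is precisely the paper's equation (\ref{eqLG9}) and is equivalent to the paper's conclusion $r(R':R)=d((R')^i:R^i)$.
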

 
 \begin{proof} Let $K^*$ be a finite Galois extension of $K$ which contains $K'$, and let $R^*$ be a normal local ring of $K^*$ such that $R^*$ lies over $R'$. 
 We have that 
 $$
 G^s(R^*/R')=G^s(R^*/R)\cap G(K^*/K')\mbox{ and }G^i(R^*/R')=G^i(R^*/R)\cap G(K^*/K'),
 $$
 so $(K')^s=K'K^s$ and $(K')^i=K'K^i$.
 Thus we have a commutative diagram of fields
 $$
 \begin{array}{ccccc}
 &&K^*&&\\
 &\nearrow&&\nwarrow&\\
 K^i&&\rightarrow &&(K')^i\\
 \uparrow&&&&\uparrow\\
 K^s&&\rightarrow&&(K')^s\\
 \uparrow&&&&\uparrow\\
 K&&\rightarrow&& K'
 \end{array}
 $$
 where $K^s=(K^*)^{G^s(R^*/R)}$, $K^i=(K^*)^{G^i(R^*/R)}$, $(K')^s=(K^*)^{G^s(R^*/R')}$ and $(K')^i=(K^*)^{G^i(R^*/R')}$.
 Considering the induced commutative diagram of fields  
 $$
 \begin{array}{ccccc}
 &&E^*&&\\
 &\nearrow&&\nwarrow&\\
 E^i&&\rightarrow &&(E')^i\\
 \uparrow&&&&\uparrow\\
 E^s&&\rightarrow&&(E')^s\\
 \uparrow&&&&\uparrow\\
 E&&\rightarrow&& E'
 \end{array}
 $$ 
 where $E, E', E^*, E^s, E^i, (E')^s, (E')^i$ are the respective quotient fields of $\hat R, \widehat{R'}, \widehat{R^*}, \widehat{R^s}, \widehat{R^i}, \widehat{(R')^s}, \widehat{(R')^i}$, we see from (\ref{eqLG6}) and (\ref{eqLG7}) that
 $$
 d(R':R)g(R^*:R')=g(R^*:R)d((R')^i:R^i).
 $$
 Thus $d(R':R)=g(R':R)d((R')^i:R^i)$.
 \end{proof}
  
 We also read off the following formulas from the commutative diagrams of Lemma \ref{LemmaLG8} and from (\ref{eqLG5}), (\ref{eqLG6}) and (\ref{eqLG7}):
 $$
 [K^*:K^i][R^*/m_{R^*}:R/m_R]_s = d(R':R)[K^*:(K')^i][R^*/m_{R^*}:R'/m_{R'}]_s, 
 $$
 so that
 \begin{equation}\label{eqLG9}
 [K^*:K^i]=r(R':R)[K^*:(K')^i]
 \end{equation}
 and
 \begin{equation}\label{eqLG10}
 [K^*:K^s]=d(R':R)[K^*:(K')^s].
 \end{equation}
 
 \begin{Lemma}\label{LemmaGL11} Let notation be as above. Then 
 \begin{enumerate}
 \item[1)] $G^i(R^*/R)\subset G(K^*/K')$ if and only if $r(R':R)=1$ and
 \item[2)] $G^s(R^*/R)\subset G(K^*/K')$ if and only if $d(R':R)=1$. 
 \end{enumerate}
 \end{Lemma}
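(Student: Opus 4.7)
The plan is to combine the standard Galois-theoretic correspondence between subgroups of $G(K^*/K)$ and intermediate fields with the numerical formulas \eqref{eqLG9} and \eqref{eqLG10} already derived in the preceding discussion. Recall the key observations from the proof of Lemma \ref{LemmaLG8}: we have $(K')^s = K'\cdot K^s$ and $(K')^i = K'\cdot K^i$, where $K^s$ and $K^i$ are the fixed fields of $G^s(R^*/R)$ and $G^i(R^*/R)$ respectively.

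For part 2), by Galois correspondence, $G^s(R^*/R)\subset G(K^*/K')$ is equivalent to $K'\subset K^s$. Since $(K')^s = K'\cdot K^s$, this containment is in turn equivalent to $(K')^s = K^s$, i.e.\ to the equality $[K^*:(K')^s] = [K^*:K^s]$. Substituting into \eqref{eqLG10}, this becomes $d(R':R) = 1$.

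For part 1), the argument is parallel: $G^i(R^*/R)\subset G(K^*/K')$ is equivalent to $K'\subset K^i$, which (via $(K')^i = K'\cdot K^i$) is equivalent to $(K')^i = K^i$, i.e.\ to $[K^*:(K')^i] = [K^*:K^i]$. Plugging into \eqref{eqLG9} yields $r(R':R) = 1$.

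There is essentially no obstacle here: both implications fall out immediately once one unpacks that $K'$ sits inside the decomposition (resp.\ inertia) field exactly when the subgroup fixes $K'$, and then reads off the numerical criterion from the two formulas established just before the lemma. The only thing to check carefully is that the formulas \eqref{eqLG9} and \eqref{eqLG10} are genuine \emph{biconditionals} in the present context — but this is automatic because $(K')^s\supset K^s$ and $(K')^i\supset K^i$ always hold (the compositum contains each factor), so the degree equalities $[K^*:(K')^s] = [K^*:K^s]$ and $[K^*:(K')^i] = [K^*:K^i]$ force the fields to coincide.
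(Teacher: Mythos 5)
Your proof is correct and is essentially the paper's argument: the paper likewise deduces the lemma from (\ref{eqLG9}) and (\ref{eqLG10}) together with the observation that $G^i(R^*/R)\subset G(K^*/K')$ iff $(K')^i=K^i$ and $G^s(R^*/R)\subset G(K^*/K')$ iff $(K')^s=K^s$, which you simply spell out via the Galois correspondence and the compositum identities $(K')^s=K'K^s$, $(K')^i=K'K^i$ from Lemma \ref{LemmaLG8}. Nothing further is needed.
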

 
 \begin{proof} The lemma follows from equations (\ref{eqLG9}) and (\ref{eqLG10}) and the observations that $G^i(R^*/R)\subset G(K^*/K')$ if and only if $(K')^i=K^i$ and  $G^s(R^*/R)\subset G(K^*/K')$ if and only if
  $(K')^s=K^s$. 
  \end{proof}

 \begin{Proposition}\label{PropGL12} Suppose that $R$ is an excellent normal local ring with quotient field $K$, $K'$ is a finite field extension of $K$ and $R'$ is a normal local ring of $K'$ which lies over $R$.  Then $R\rightarrow R'$ is unramified if and only if $r(R'/R)=1$.
 \end{Proposition}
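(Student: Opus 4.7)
The plan is to prove the two implications separately. The forward direction (``unramified $\Rightarrow r=1$'') will use a Hensel-style structure theorem for $\hat R'$ over $\hat R$. The reverse will first reduce to the case $K'/K$ separable, and then handle that case by passing to a Galois closure and invoking the inertia tower $R\to R^s\to R^i$ constructed earlier in this section.

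For the forward direction, unramification gives $m_R R'=m_{R'}$, so the $m_R$- and $m_{R'}$-adic topologies on the finite $R$-module $R'$ coincide and $\hat R'=R'\otimes_R\hat R$ is a finite $\hat R$-algebra. I would lift a primitive element $\bar\theta$ of the separable residue extension $R'/m_{R'}$ over $R/m_R$ (of degree $n=g(R':R)$) together with its monic minimal polynomial $\bar h$, and apply Hensel's lemma in $\hat R$ to obtain a root $\tilde\theta\in\hat R'$ of a lift $h\in R[T]$. Nakayama yields a surjection $\hat R[T]/(h(T))\twoheadrightarrow\hat R'$; irreducibility of $\bar h$ forces $h$ irreducible in $\hat R[T]$, and Gauss's lemma for the normal domain $\hat R$ makes $\hat R[T]/(h(T))$ a free $\hat R$-module of rank $n$ that is an integral domain of Krull dimension $\dim\hat R'=\dim\hat R$. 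Comparing dimensions forces the surjection to be an isomorphism, so $[E':E]=n=g(R':R)$ and $r(R':R)=1$.

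For the converse, I would first reduce to $K'/K$ separable. Set $K'_s$ equal to the separable closure of $K$ in $K'$ and $R'_s=R'\cap K'_s$; multiplicativity of $r$ together with positivity of each factor (Lemma \ref{LemmaLG8}) gives $r(R'_s:R)=r(R':R'_s)=1$. In the purely inseparable extension $R'_s\to R'$, we automatically have $g(R':R'_s)=1$, while excellence of $R'_s$ forces $\mathrm{Frac}(\widehat{R'_s})/\mathrm{Frac}(R'_s)$ to be separable, which prevents purely inseparable generators of $K'$ over $K'_s$ from collapsing in $\mathrm{Frac}(\widehat{R'_s})$; this yields $d(R':R'_s)=[K':K'_s]$. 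So $r(R':R'_s)=1$ forces $K'=K'_s$, and we may assume $K'/K$ is separable.

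With $K'/K$ separable, choose a Galois closure $K^*$ of $K'$ over $K$ and a local ring $R^*$ of $K^*$ lying over $R'$. By Lemma \ref{LemmaGL11} the hypothesis $r(R':R)=1$ is equivalent to $K'\subset K^i$, so $R'\subset R^i=R^*\cap K^i$ and $R^i$ dominates $R'$. The map $R\to R^s$ is unramified with trivial residue extension because $\widehat{R^s}=\hat R$ (Theorem 1.47 \cite{RTM}, Theorem 30.6 \cite{N}) and faithful flatness of completion forces $m_R R^s=m_{R^s}$; and $R^s\to R^i$ is unramified with separable residue extension by Theorem 1.48 \cite{RTM}. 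Composing, $R\to R^i$ and thus $R'\to R^i$ are unramified with separable residue extension. Applying the forward direction to $R'\to R^i$, $\widehat{R^i}$ is $\widehat{R'}$-free, so $R^i$ is faithfully flat over $R'$. Since $m_{R'}R^i=m_R R^i=m_{R^i}$, faithfully flat descent yields $m_{R'}=m_R R'$, and $R'/m_{R'}\hookrightarrow R^i/m_{R^i}$ is separable over $R/m_R$. The main obstacle I anticipate is the inseparable reduction, which depends critically on excellence forcing $\mathrm{Frac}(\widehat{R'_s})/\mathrm{Frac}(R'_s)$ to be separable via the geometric regularity of the formal fibres; without this, $r=1$ could hold in nontrivial purely inseparable extensions and the proposition would fail.
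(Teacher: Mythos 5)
Your argument is correct in substance, but it is a genuinely different proof from the one in the paper. The paper settles Proposition \ref{PropGL12} in a few lines by quoting Abhyankar's discriminant criterion: $R\rightarrow R'$ is unramified iff $\hat R\rightarrow \hat R'$ is unramified iff $D(\hat R'/\hat R)=\hat R$ (Theorems 1.44 and 1.44A \cite{RTM}, the proof of 1.44A in \cite{LU} being valid for excellent $R$), and by Theorem 1.45 \cite{RTM} this holds iff $[R'/m_{R'}:R/m_R]_s=[E':E]$, which is exactly $r(R':R)=1$. You instead prove both implications directly: unramifiedness plus Hensel and Nakayama give $\hat R'\cong \hat R[T]/(h)$ with $h$ monic, residually irreducible and separable, whence $[E':E]=g(R':R)$; conversely you reduce to the separable case through the separable closure $K'_s$ (using geometric regularity of the formal fibres of the excellent ring $R'_s$ to get $d(R':R'_s)=[K':K'_s]$), and then run $r(R':R)=1\Rightarrow K'\subset K^i$ (Lemma \ref{LemmaGL11}) through the tower $R\rightarrow R^s\rightarrow R^i$, applying your forward direction to $R'\rightarrow R^i$ to get flatness and descending $m_RR^i=m_{R^i}$ faithfully flatly to $m_RR'=m_{R'}$. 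The paper's route buys brevity at the cost of leaning on \cite{RTM} and \cite{LU}; yours buys a self-contained argument that makes the structure $\hat R'\cong\hat R[T]/(h)$ explicit (essentially the content of the quoted theorems) and that honestly treats the inseparable case, for which the paper's $d,g,r$ formalism was only set up under a separability hypothesis. Two small repairs are needed but do not affect the structure: $R'$ need not be a finite $R$-module (the integral closure of $R$ in $K'$ may have several maximal ideals), so $\hat R'\ne R'\otimes_R\hat R$ in general; what you actually use, that $\hat R'$ is module-finite over $\hat R$, follows either from Proposition \ref{PropLG1} or, under unramifiedness, from the complete version of Nakayama, since $m_R\hat R'=m_{\hat R'}$ and $\hat R'/m_R\hat R'=R'/m_{R'}$ is finite over $R/m_R$. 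Also, Hensel's lemma is applied in the complete local ring $\hat R'$, where the root is sought, not in $\hat R$.
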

 
 \begin{proof} The extension $R\rightarrow R'$ is unramified if and only if $\hat R\rightarrow \hat R'$ is unramified which holds if and only if $D(\hat R'/\hat R)=\hat R$ by Theorems 1.44 and  1.44A \cite{RTM} (the proof of Theorem 1.44A, which is given in \cite{LU}, holds when $R$ is excellent). The discriminant ideal $D(\hat R'/\hat R)$ is defined on page 31 of \cite{RTM}. This condition holds if and only if 
 $$
 [R'/m_{R'}:R/m_R]_s=[E':E],
 $$
  where $E'$ is the quotient field of $\hat R'$ and $E$ is the quotient field of $\hat R$ by Theorem 1.45 \cite{RTM}. But this is equivalent to the condition that $r(R'/R)=1$.
 \end{proof}

\section{The extension of associated graded rings in an unramified extension}\label{SecUnR}

In this section we prove Proposition \ref{PropUnR}. We have the following assumptions. Suppose that $R$ and $S$ are  normal local rings such that $R$ is excellent, $S$ lies over  $R$, $\tilde \nu$ is a valuation of the quotient field $L$ of $S$ which dominates $S$, and $\nu$ is the restriction of $\tilde \nu$ to the quotient field $K$ of $R$. Suppose that $L$ is finite separable over $K$.  

\begin{Lemma}\label{Lemma4}  Suppose that $S_0$ is a local ring which is a birational extension of $S$ and is dominated by $\tilde\nu$. Then there exists a normal local ring $R'$ which is a birational extension of $R$ and is dominated by $\nu$, which has the property that if $R''$ is a normal local ring which is a birational extension of $R'$ and is dominated by $\nu$, and if $S''$ is the normal local ring of $L$ which lies over $R''$ and is dominated by $\tilde\nu$, then $S''$ dominates $S_0$.
\end{Lemma}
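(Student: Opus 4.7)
The plan is to write $S_0 = A_P$ where $A = S[x_1,\ldots,x_n]$ is a finitely generated $S$-subalgebra of $L$ and $P = A\cap m_{\tilde\nu}$; the fact that $\tilde\nu$ dominates $S_0$ forces $x_i\in V_{\tilde\nu}$ for each $i$ and ensures that $A\setminus P$ consists of elements of $\tilde\nu$-value zero. It will be enough to construct $R'$ so that, for every admissible $R''$, each $x_i$ lies in $S''$: once that holds, $A\subset S''$, the elements of $A\setminus P$ become units in $S''$, and so $S_0 = A_P\subset S''$; the domination $m_{S_0}\subset m_{S''}$ is then automatic since both rings are dominated by $\tilde\nu$.

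The central step will be an application of the approximation theorem for independent valuations. Let $\tilde\nu=\tilde\nu_1,\ldots,\tilde\nu_g$ be the (finitely many) extensions of $\nu$ to $L$; since $L/K$ is finite algebraic, these valuations are pairwise independent. For each generator $x_i$ I will choose $c_i\in L$ with
\[
\tilde\nu(c_i)=0 \qquad \text{and} \qquad \tilde\nu_j(c_i)\ge \max\bigl(0,\,-\tilde\nu_j(x_i)\bigr)\ \text{for } j\ge 2.
\]
Then $\tilde\nu_j(c_i)\ge 0$ and $\tilde\nu_j(c_i x_i)\ge 0$ for every $j$, so $c_i$ and $c_i x_i$ both lie in the integral closure $W^{0}=\bigcap_{j=1}^{g} V_{\tilde\nu_j}$ of $V_\nu$ in $L$. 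Because $V_\nu$ is integrally closed in $K$, the minimal polynomials over $K$ of $c_i$ and of $c_i x_i$ have all their coefficients in $V_\nu$.

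I will then take $R'$ to be the localization at the center of $\nu$ of the integral closure in $K$ of the $R$-subalgebra of $V_\nu$ generated by all of these coefficients; by excellence of $R$, this is a normal local ring birationally dominating $R$ and dominated by $\nu$. For any normal local $R''$ birationally dominating $R'$ and dominated by $\nu$, the elements $c_i$ and $c_i x_i$ are integral over $R'\subset R''$, hence lie in the integral closure $W''$ of $R''$ in $L$, and therefore in $S''=(W'')_{W''\cap m_{\tilde\nu}}$. Since $\tilde\nu(c_i)=0$, $c_i$ is a unit in $S''$, and so $x_i=(c_i x_i)/c_i\in S''$, as required.

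The hard part is the choice of the multiplier $c_i$: a generator $x_i$ with $\tilde\nu(x_i)\ge 0$ need not be integral over $V_\nu$, because the conjugate extensions $\tilde\nu_j(x_i)$ can be negative, so its minimal polynomial over $K$ may have coefficients outside $V_\nu$ and $x_i$ itself cannot be directly brought into any birational extension of $R$. Independence of the $\tilde\nu_j$ is exactly what enables the approximation theorem to produce a $c_i$ that is a unit at $\tilde\nu$ but sufficiently positive at every other extension to absorb the negative values of $x_i$; this is the technical ingredient that makes the construction of $R'$ possible.
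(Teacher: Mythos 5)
Your overall strategy coincides with the paper's: bring a denominator $c_i$ and the product $c_ix_i$ into the integral closure of $V_\nu$ in $L$, adjoin to $R$ the coefficients of their (monic) integral equations over $V_\nu$, normalize and localize at the center of $\nu$ to get $R'$, and then note that for any admissible $R''$ the elements $c_i$ and $c_ix_i$ lie in the integral closure $C$ of $R''$ in $L$ with $c_i\notin C\cap m_{\tilde\nu}$, so $x_i\in S''$, hence $S_0\subset S''$ and $S''$ dominates $S_0$. That part is fine. The gap is precisely in the step you flag as the crucial one: the claim that the extensions $\tilde\nu_1,\ldots,\tilde\nu_g$ of $\nu$ to $L$ are pairwise \emph{independent} is false in general. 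They are pairwise incomparable, but when $\nu$ has rank $\geq 2$ two distinct extensions can share a nontrivial common coarsening and are then dependent: if the rank one coarsening of $\nu$ extends uniquely to a valuation $\mu$ of $L$ while the valuation induced on the residue field of $V_\mu$ extends in several ways, all the resulting composite extensions of $\nu$ admit $V_\mu$ as a common coarsening. (This is exactly the shape of the example at the end of Section \ref{SecUnR}, and arbitrary rank is the situation this paper must handle.) Consequently the approximation theorem for independent valuations cannot be invoked to produce $c_i$; in the presence of a common coarsening there are real constraints on which tuples of values are simultaneously realizable, so the existence of $c_i$ would at best require the approximation theorem for incomparable valuations together with a verification of the compatibility conditions, which you do not carry out.

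The element you want does exist, but the clean way to obtain it --- and the paper's way --- is the standard fact that the integral closure $A=\bigcap_{j}V_{\tilde\nu_j}$ of $V_\nu$ in $L$ has the rings $V_{\tilde\nu_j}$ as its localizations at its maximal ideals (Proposition 2.36 and Lemma 2.37 of \cite{RTM}, already quoted in the proof of Lemma \ref{Crit}). Since $x_i\in V_{\tilde\nu}=A_{A\cap m_{\tilde\nu}}$, one can write directly $x_i=b_i/c_i$ with $b_i,c_i\in A$ and $\tilde\nu(c_i)=0$, which is exactly the pair of conditions you were trying to force by approximation. With that substitution the remainder of your argument goes through and is the paper's proof.
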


\begin{proof} There exist $f_1,\ldots,f_n\in V_{\tilde \nu}$ such that $S_0$ is a localization of $S[f_1,\ldots,f_n]$. Let $A$ be the integral closure of $V_{\nu}$ in $L$ so that $V_{\tilde\nu}$ is the localization of $A$ at $m_{\tilde\nu}\cap A$. Thus we may write, for $1\le i\le n$, $f_i=\frac{b_i}{c_i}$ with $b_i,c_i\in A$, $\tilde\nu(b_i)\ge 0$ and $\tilde \nu(c_i)=0$.
For $1\le i\le n$, let
$$
b_i^{m_i}+d_{i,1}b_i^{m_i-1}+\cdots+ d_{i,m_i}=0\mbox{ and }
c_i^{n_i}+e_{i,1}c_i^{n_i-1}+\cdots+e_{i,n_i}=0
$$
be equations of integral dependence of $b_i$ and $c_i$ over $V_{\nu}$, so that all $d_{i,j},e_{i.j}\in V_{\nu}$.
Let $B$ be the integral closure of 
$$
R[\{d_{i,j}\mid 1\le j\le m_i, 1\le i\le n\},\{e_{i,j}\mid 1\le j\le n_i, 1\le i\le n\}]
$$ 
in $K$, and let $R'=B_{m_{\nu}\cap B}$. Suppose that $R''$ and $S''$ are as in the statement of the lemma. Then $S''$ is the localization of the integral closure $C$ of $R''$ in $L$ at $C\cap m_{\tilde\nu}$. We have that $b_i,c_i\in C$ for $1\le i\le n$ and $c_i\not\in m_{\tilde\nu}\cap C$ for $1\le i\le n$ so $f_1,\ldots,f_n\in S''$. Thus $S_0\subset S''$ and $S''$ dominates $S_0$.
\end{proof}

We now impose the further condition that $S$ is unramified over $R$, and   prove Proposition \ref{PropUnR}.

 Let $K^*$ be a Galois closure of $L$ over $K$. Let $\nu^*$ be an extension of $\tilde\nu$ to $K^*$. Let $R^*$ be the local ring of $K^*$ which is dominated by $\nu^*$ and lies over $S$. Now $G^i(R^*/R)\subset G(K^*/L)$ by Lemma \ref{LemmaGL11} and Proposition \ref{PropGL12}, since $R\rightarrow S$ is unramified. Further, $G^i(V_{\nu^*}/V_{\nu})\subset G^i(R^*/R)$ by Proposition 1.50 \cite{RTM}.
Thus $G^i(V_{\nu^*}/V_{\nu})\subset G(K^*/L)$, so $L\subset K^i=(K^*)^{G^i(V_{\nu^*}/V_{\nu})}$.

By Lemmas \ref{Lemma10}, \ref{Crit}  (which implies Lemma \ref{Lemma8}) and \ref{Lemma9}, there exists a normal local ring $R'$ which is a birational extension of $R$ and is dominated by $\nu$ such that the conclusions of Lemmas \ref{Lemma10}, \ref{Crit} and \ref{Lemma9} hold for the field extension $K\rightarrow K^*$, $V_{\nu^*}$ over $V_{\nu}$ and $R'$. Further, 
by Lemmas \ref{Lemma10}, \ref{Crit} and \ref{Lemma9}, there exists a normal local ring $S'$ which is a birational extension of $S$ and is dominated by $\tilde\nu$ such that the conclusions of Lemmas \ref{Lemma10}, \ref{Crit} and \ref{Lemma9} hold for the field extension $L\rightarrow K^*$, $V_{\nu^*}$ over $V_{\tilde \nu}$ and $S'$. 
By Lemma \ref{Lemma4}, we may assume that the normal local ring of $L$ which lies over $R'$ and is dominated by $\tilde\nu$ dominates $S'$. 
Replacing $R$ with $R'$ and $S$ with the normal local ring of $L$ which lies over $R'$ and is dominated by $\tilde\nu$, we may assume that the conclusions of Lemmas \ref{Lemma10}, \ref{Crit} and \ref{Lemma9} hold for $R$ and $S$.

 Let $L^s=(K^*)^{G^s(V_{\nu^*}/V_{\tilde\nu})}$, $K^s=(K^*)^{G^s(V_{\nu^*}/V_{\nu})}$ and $K^i=(K^*)^{G^i(V_{\nu^*}/V_{\nu})}$. We have that 
 $$
 G(K^*/K^sL)=G(K^*/L)\cap G^s(V_{\nu^*}/V_{\nu})=G^s(V_{\nu^*}/V_{\tilde \nu})
 $$
 so $L^s$ is the join $L^s=K^sL$.
 We then have a commutative diagram of fields
$$
\begin{array}{lllllllll}
&&L\\
&\nearrow&&\searrow\\
K&&&&L^s=K^sL&\rightarrow &K^i&\rightarrow&K^*.\\
&\searrow&&\nearrow\\
&&K^s\\
\end{array}
$$
Let $S^s=L^s\cap R^*$,  $R^s=K^s\cap R^*$ and $R^i=K^i\cap R^*$. Then
$$
{\rm gr}_{\tilde\nu}(S)\cong {\rm gr}_{\nu^*}(S^s)
$$
by Proposition \ref{PropSplit}, and $S^s/m_{S^s}\cong S/m_S$. Also,
$$
{\rm gr}_{\nu^*}(R^s)\cong {\rm gr}_{\nu}(R)
$$
 by Proposition \ref{PropSplit}, with $R^s/m_{R^s}\cong R/m_R$. 
 We have that $S^s$ is finite over $R^s$ since $S^s$ is the unique local ring of $L^s$ lying over $R^s$, and the elements of a basis of $S/m_S$ over $R/m_R$ are linearly independent over $k(V_{\nu})$ since $k(V_{\nu^*})$ and $k(R^i)$ are linearly disjoint over $k(R)$ by Lemma \ref{Lemma9}.
 Thus the proof of Proposition \ref{PropInert} shows that
$$
{\rm gr}_{\nu^*}(S^s)\cong {\rm gr}_{\nu^*}(R^s)\otimes_{R/m_R}S/m_S.
$$

We now give an example showing that taking a birational extension of $R$ may be necessary to obtain the conclusions of Proposition \ref{PropUnR}. 
Let $k$ be a field of characteristic $\ne 2$ and $k(x,y,z)$ be a three dimensional rational function field. Let $k'=k(z)$, which is a one dimensional   rational function field. Let $\nu_1$ be the $k'$-valuation of $k'(x,y)$ which  is determined by a generating sequence $P_0=x, P_1=y, P_2=y^2-(z+1)x^2,P_3,\ldots$ in $k'[x,y]$ where $\nu_1(P_0)=\nu_1(P_1)=1$, $\nu_1(P_2)=\frac{5}{2}$ and all the $P_i$ are in $k[x,y,z]$ (for instance using the algorithm of \cite{CV1}).We have that $\nu_1(P_{i+1})>\nu_1(P_i)$ if $i>1$  and the semigroup $S^{k'[x,y]_{(x,y)}}(\nu_1)$ is generated by $\nu(P_0), \nu(P_1), \nu(P_2),\nu(P_3),\ldots$. Let $\nu_2$ be an extension to $V_{\nu_1}/m_{\nu_1}$ of the $k$-valuation of $k'$ defined by $\nu_2(g(z))=\mbox{ord}_z(g)$ for $g\in k[z]$. Let $\nu$ be the composition of $\nu_1$ and $\nu_2$, and let $R=k[x,y,z]_{(x,y,z)}$ which is dominated by $\nu$. We have that $R_{(x,y)}=k'[x,y]_{(x,y)}$.  

Let $S=(R[w]/w^2-(z+1))_{(x,y,z,w-1)}$.
 The extension $R\rightarrow S$ is unramified. Let $\nu^*$ be an extension of $\nu$ to $K^*={\rm QF}(S)$ which dominates $S$ and is composite with extensions $\overline\nu_1$ of $\nu_1$ to $K^*$ and $\overline\nu_2$ of $\nu_2$ to $V_{\overline\nu_1}/m_{\overline\nu_1}$. 
 
 By the explanation on page 56 \cite{RTM} or Theorem 17, page 43 \cite{ZS2}, we have a commutative diagram of homomorphisms of value groups, where the horizontal sequences are short exact and the vertical arrows are injective,
 $$
 \begin{array}{ccccccccc}
 0&\rightarrow &\Phi_{\nu_2}&\rightarrow&\Phi_{\nu}&\rightarrow&\Phi_{\nu_1}&\rightarrow& 0\\
 &&\downarrow&&\downarrow&&\downarrow\\
 0&\rightarrow &\Phi_{\overline \nu_2}&\rightarrow&\Phi_{\nu^*}&\rightarrow&\Phi_{\overline \nu_1}&\rightarrow& 0
 \end{array}
 $$
  which induces a commutative diagram of homomorphisms of semigroups, where the horizontal arrows are surjective and the vertical arrows are injective,
  $$
  \begin{array}{ccc}
  S^R(\nu)&\rightarrow&S^{R_{(x,y)}}(\nu_1)\\
  \downarrow&&\downarrow\\
  S^S(\nu^*)&\rightarrow&S^{S_{(x,y)}}(\overline\nu_1).
  \end{array}
  $$
 
We have that 
 $$
 \frac{5}{2}=\nu_1(P_2)=\overline \nu_1(y-wx)+\overline \nu_1(y+wx).
  $$
  Letting $\overline{\left(\frac{y}{x}\right)}$ and $\overline w$ be the respective classes of $\frac{y}{x}$ and $w$ in $V_{\overline \nu_1}/m_{\overline\nu_1}$, we deduce from $P_2$ that $\overline{\left(\frac{y}{x}\right)}=\pm \overline w$. Without loss of generality, $\overline{\left(\frac{y}{x}\right)}=\overline w$. Then   $\overline\nu_1(y+wx)=1$. Thus
 $$
 \overline\nu_1(y-wx)=\frac{3}{2}\not\in S^{R_{(x,y)}}(\nu_1).
 $$

\section{Conventions on valuations in algebraic function fields}\label{convs}
We recall some classical invariants of  valuations (Chapter VI, \cite{ZS2},
\cite{RTM}), and establish some notation which we will follow for the remainder of the paper.

Suppose that $K$ is a field of algebraic functions over a field $k$.
We will say that
a local domain  $R$ with quotient field  $K$ is an algebraic local ring of $K$ if $R$ is a localization of a  finite type  $k$-algebra.
A valuation $\nu$ of $K$ will be called a $k$-valuation if $\nu(f)=0$ for all nonzero $f\in k$. 
If $X$ is an integral
$k$-scheme with function field $K$, then a point $p\in X$ is called a center of the
valuation $\nu$ (or the valuation ring $V_{\nu}$) if $V_{\nu}$ dominates ${\mathcal O}_{X,p}$.

Suppose that 
  $\nu$ is a $k$-valuation of $K$ with valuation ring $V_{\nu}$ and value group $\Phi_{\nu}$. Let $m_{\nu}$ be the maximal ideal of $V_{\nu}$.
The rank $r$ of $V_{\nu}$ is finite since 
$r\le \text{trdeg}_kK<\infty$ by the Corollary on  page 50 of  Section 11, Chapter VI, \cite{ZS2}. 
We have the chains of isolated subgroups $\Phi_i$ in $\Phi_{\nu}$ of (\ref{eqVT1}) and of prime ideal $I_i$ in $V_{\nu}$ of (\ref{eqVT2}).

For $i<j$,
$(V_{\nu}/I_i)_{I_j}$ is a rank $j-i$ valuation ring with value group $\Phi_i/\Phi_j$
and with quotient field $(V_{\nu}/I_i)_{I_i}$.
$V_{\nu}$ is  said to be composite with the valuation ring  $(V_{\nu}/I_i)_{I_j}$. Set
$$
\lambda_i=\text{trdeg}_k(V_{\nu}/I_i)_{I_i}
$$
for $0\le i\le r$. The rational rank of $(V_{\nu}/I_{i-1})_{I_i}$ is
$$
s_i=\text{ratrank}(V_{\nu}/I_{i-1})_{I_i}:=\text{dim}_{\bold Q}(\Phi_{i-1}/\Phi_i)\otimes
{\bold Q}
$$
for $1\le i\le r$. The numbers $s_i$ and $\lambda_i$ are $<\infty$ by Theorem 1 \cite{Ab1} or by
Proposition 2, Appendix 2 \cite{ZS2}.

Now suppose that $K^*$ is a finite extension of $K$, and $\nu^*$ is 
an extension of $\nu$ to $K^*$.  Let $V_{\nu^*}$ be the valuation ring of $\nu^*$, and let $\Phi_{\nu^*}$ be the value group.
Recall from Section \ref{SecVT} that the prime ideals  of $V_{\nu^*}$ are
a finite chain
$$
0=I_0^*\subset\cdots\subset I_r^*=m_{\nu^*}\subset V_{\nu^*}
$$
with $I_i^*\cap V_{\nu}=I_i$, $0\le i\le r$, and with isolated subgroups
$$
0=\Phi_r^*\subset \cdots\subset \Phi_0^*=\Phi_{\nu^*}
$$
which have the property that $\Phi_i^*\cap\Phi_{\nu}=\Phi_i$
for $0\le i\le r$ and $\Phi_i^*/\Phi_i$ is a finite (Abelian) group for 
$0\le i\le r$ (Section 11, Chapter VI \cite{ZS2}). We further have that 
$$
\text{trdeg}_k(V_{\nu^*}/I_i^*)_{I_i^*}=\text{trdeg}_k(V_{\nu}/I_i)_{I_i}=\lambda_i
$$
for $0\le i\le r$ and
$$
\text{ratrank}(V_{\nu^*}/I^*_{i-1})_{I^*_i}=\text{ratrank}(V_{\nu}/I_{i-1})_{I_i}=s_i
$$
for $1\le i\le r$. Set $t_i=\lambda_{i-1}-\lambda_i$ for $1\le i\le r$.

The reduced ramification index of $\nu^*$ relative to $\nu$ is defined to be 
(page 53, Section 11, Chapter VI, \cite{ZS2}) 
\begin{equation}\label{eq31}
e=[\Phi_{\nu^*}:\Phi_{\nu}].
\end{equation}
The residue degree of $\nu^*$ with respect to $\nu$ is defined to be (page 53, Section 11, Chapter VI \cite{ZS2}) 
\begin{equation}\label{eq32}
f=[V_{\nu^*}/m_{\nu^*}:V_{\nu}/m_{\nu}].
\end{equation}

\section{An Abyankar Jung Theorem along a valuation}\label{SecAJ}

In this section, we prove Theorem \ref{ThmAJV} and Proposition \ref{PropAJV2}. Let notation be as in the statement of Theorem \ref{ThmAJV}.

Let $u_1,\ldots,u_{\lambda_r}\in V_{\nu}$ be such that their classes $\overline u_1,\ldots,\overline u_{\lambda_r}$ in $V_{\nu}/m_{\nu}$ are a transcendence basis of $V_{\nu}/m_{\nu}$ over $k$. Then $k(u_1,\ldots,u_{\lambda_r})$ is a rational function field over $k$ which is contained in $V_{\nu}$ and $V_{\nu}/m_{\nu}$ is an algebraic extension of $k(u_1,\ldots,u_{\lambda_r})$. Let $T''$ be the integral closure of $\tilde R[u_1,\ldots,u_{\lambda_r}]$ in $K$, and $R''=T''_{T''\cap m_{\nu}}$.
 Suppose that $f\in k[u_1,\ldots,u_{\lambda_r}]$ is nonzero. Then $\nu(f)=0$ since $u_1,\ldots,u_{\lambda_r}$ are algebraically independent over $k$ in $V_{\nu}/m_{\nu}$. Thus $f\not\in m_{R''}=m_{\nu}\cap R''$, and so $\frac{1}{f}\in R''$. In particular, $k(u_1,\ldots,u_{\lambda_r})\subset R''$. We may thus replace $k$ with $k(u_1,\ldots,u_{\lambda_r})$ and $\tilde R$ with $R''$ allowing us to assume that $V_{\nu}/m_{\nu}$ is algebraic over $k$. Observe that $\tilde R/m_{\tilde R}$ is then necessarily a finite field extension of $k$ since $\tilde R/m_{\tilde R}$ is a finitely generated algebraic field extension of $k$.

We will use the following generalization of the strong monomialization theorem, Theorem 4.8 \cite{CP}. Theorem 4.8 \cite{CP} is itself a generalization of the local monomialization theorem of \cite{C}.

\begin{Theorem}\label{SSM}
Let $k$ be a field of characteristic zero, $K$ an algebraic function field over $k$, $K^*$ a finite algebraic extension of $K$,
$\nu^*$ a $k$-valuation of $K^*$. Suppose that $\tilde S$ is an algebraic  local ring with
quotient field $K^*$ which is dominated by $\nu^*$ and $\tilde R$ is an algebraic local ring with
quotient field $K$ which is dominated by $\tilde S$. Let notation be as in  Section \ref{convs} for
$V=V_{\nu}$, $V^*=V_{\nu^*}$. Then there exists a commutative diagram 
\begin{equation}
\begin{array}{lllll}
R_0&\rightarrow& S&\subset&V^*\\
\uparrow&&\uparrow\\
\tilde R&\rightarrow&\tilde S
\end{array}
\end{equation}
such that  
 $\tilde R \rightarrow R_0$ and $\tilde S \rightarrow S$ are sequences of monoidal transforms
such that $V^*$ dominates $S$, $S$ dominates $R_0$ and 
there are regular parameters $(x_1, .... ,x_n)$
in $R_0$,  $(y_1, ... ,y_n)$ in $S$ such that 
$$
\begin{array}{l}
I_i\cap R_0=(x_1,\ldots,x_{t_1+\cdots+t_i})\\
I_i^*\cap S=(y_1,\ldots,y_{t_1+\cdots+t_i})
\end{array}
$$
for $1\le i\le r$ and there are relations
$$
\begin{array}{ll}
x_1&=y_1^{g_{11}(1)}\cdots y_{s_1}^{g_{1s_1}(1)}y_{t_1+1}^{h_{1,t_1+1}(1)}\cdots
y_n^{h_{1n}(1)}\delta_{11}\\
&\vdots\\
x_{s_1}&=y_1^{g_{s_11}(1)}\cdots y_{s_1}^{g_{s_1s_1}(1)}y_{t_1+1}^{h_{s_1,t_1+1}(1)}
\cdots y_n^{h_{s_1n}(1)}\delta_{1s_1}\\
x_{s_1+1}&=y_{s_1+1}\\
&\vdots\\
x_{t_1}&=y_{t_1}\\
x_{t_1+1}&=y_{t_1+1}^{g_{11}(2)}\cdots y_{t_1+s_2}^{g_{1s_2}(2)}y_{t_1+t_2+1}^{h_{1,t_1+t_2+1}(2)}
\cdots y_n^{h_{1n}(2)}\delta_{21}\\
&\vdots\\
x_{t_1+s_2}&=y_{t_1+1}^{g_{s_21}(2)}\cdots y_{t_1+s_2}^{g_{s_2s_2}(2)}
y_{t_1+t_2+1}^{h_{s_2,t_1+t_2+1}(2)}\cdots y_n^{h_{s_2n}(2)}\delta_{2s_2}\\
x_{t_1+s_2+1}&=y_{t_1+s_2+1}\\
&\vdots\\
x_{t_1+t_2}&=y_{t_1+t_2}\\
&\vdots\\
x_{t_1+\cdots +t_{r-1}+1}&=y_{t_1+\cdots+t_{r-1}+1}^{g_{11}(r)}\cdots y_{t_1+\cdots+t_{r-1}+s_r}
^{g_{1s_r}(r)}\delta_{r1}\\
&\vdots\\
x_{t_1+\cdots+t_{r-1}+s_r}&=y_{t_1+\cdots+t_{r-1}+1}^{g_{s_r1}(r)}\cdots
y_{t_1+\cdots+t_{r-1}+s_r}^{g_{s_rs_r}(r)}\delta_{rs_r}\\
x_{t_1+\cdots+t_{r-1}+s_r+1}&=y_{t_1+\cdots+t_{r-1}+s_r+1}\\
&\vdots\\
x_{t_1+\cdots+t_r}&=y_{t_1+\cdots+t_r}
\end{array}
$$
where $n=t_1+\cdots+t_r$ and for $1\le i\le r$,
$$
\text{det}\left(\begin{array}{lll}
g_{11}(i)&\cdots&g_{1s_i}(i)\\
\vdots&&\vdots\\
g_{s_i1}(i)&\cdots&g_{s_is_i}(i)
\end{array}\right)\ne 0,
$$
$\delta_{ij}$ are units in $S$, $h_{i,j}(l)$ are natural numbers such that for
$1\le l\le k\le r-1$ and $1\le i\le s_l$,
$$
h_{i,j}(l)=0\text{ if }t_1+\cdots+t_k+s_{k+1}<j\le t_1+\cdots+t_{k+1}.
$$
Let
$$
T=\{j\mid t_1+\cdots+t_k<j\le t_1+\cdots+t_k+s_{k+1}\text{ for some }0\le k\le r-1\}.
$$
Then $\{\nu^*(y_j)\mid j\in T\}$ is a rational basis of $\Phi_{\nu^*}\otimes{\bold Q}$ and
$\{\nu^*(x_j)\mid j\in T\}$ is a rational basis of $\Phi_{\nu}\otimes{\bold Q}$.
\end{Theorem}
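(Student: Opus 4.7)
The plan is to derive Theorem \ref{SSM} from the strong monomialization theorem of \cite{CP} (Theorem 4.8) together with the birational reductions of Section \ref{SecVT}. First, enlarge $\tilde R$ so that the conclusions of Lemmas \ref{Lemma10} and \ref{Crit} hold for $R=\tilde R$, and then replace $\tilde S$ by the normal algebraic local ring of $K^*$ that lies over $\tilde R$ and is dominated by $\nu^*$. By Lemma \ref{Lemma4} applied to the original $\tilde S$, every sufficiently large monoidal transform $R_0$ of $\tilde R$ along $\nu$ is then dominated by a unique $S$ lying over $R_0$ and dominated by $\nu^*$, and this $S$ in turn dominates the original $\tilde S$. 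This reduction allows us to assume throughout that $S$ lies over $R_0$ in the strict sense required by \cite{CP}.

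Next, apply Theorem 4.8 of \cite{CP} to obtain further sequences of monoidal transforms yielding regular parameters $(x_1,\ldots,x_n)$ in $R_0$ and $(y_1,\ldots,y_n)$ in $S$ in the prescribed block-triangular monomial form, with $n=t_1+\cdots+t_r$. Block $i$ is arranged to monomialize the rank-one quotient valuation attached to $\Phi^*_{i-1}/\Phi^*_i$, and its $s_i$ free generators $y_{t_1+\cdots+t_{i-1}+1},\ldots,y_{t_1+\cdots+t_{i-1}+s_i}$ have values forming a $\QQ$-basis of $(\Phi_{i-1}^*/\Phi_i^*)\otimes\QQ$. The identification $I_i\cap R_0=(x_1,\ldots,x_{t_1+\cdots+t_i})$ follows from Lemma \ref{Lemma10} together with the fact that $\nu^*(y_j)\in\Phi_{i-1}^*\setminus\Phi_i^*$ for $j$ in block $i$ while $\nu^*(y_j)\in\Phi_i^*$ for $j$ in higher blocks; the identification for $I_i^*\cap S$ is immediate from the monomial relations. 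The non-vanishing of $\det(g_{ab}(i))$ records that the exponents in block $i$ define an invertible $\QQ$-linear map from the values of the top $y$-parameters to those of the top $x$-parameters of the block, from which the asserted rational bases for $\Phi_\nu\otimes\QQ$ and $\Phi_{\nu^*}\otimes\QQ$ follow.

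The principal technical obstacle is maintaining the block structure across all $r$ rank levels simultaneously, since a monoidal transform that monomializes block $i$ must not disrupt the structure already established at higher blocks. This is handled in \cite{CP} by an induction on the rank $r$: one first applies the rank-one local monomialization of \cite{C} to the bottom quotient valuation on $(V_{\nu^*}/I^*_{r-1})_{I^*_r}$, then descends to the corresponding specialization and recurses on rank $r-1$. Because the centers of the blow-ups used in each inductive step lie inside the previously monomialized ideal $(x_1,\ldots,x_{t_1+\cdots+t_{i-1}})$, the monomial relations in higher blocks are preserved, and the exponents $h_{i,j}(l)$ appearing in the block-$l$ equations encode precisely this cross-block interaction, with the vanishing conditions $h_{i,j}(l)=0$ for $j$ outside the specified range reflecting this preservation.
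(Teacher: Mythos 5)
There is a genuine gap: your argument reduces Theorem \ref{SSM} to Theorem 4.8 of \cite{CP} and then stops, but Theorem \ref{SSM} is strictly stronger than that theorem, and the entire content of the paper's proof is the strengthening. In Theorem 4.8 of \cite{CP} the variables whose indices lie outside the set $T$ satisfy only
$$
x_{t_1+\cdots+t_{i-1}+j}=\delta_{i,j}\,y_{t_1+\cdots+t_{i-1}+j}\,y_{t_1+\cdots+t_i+1}^{h_{j,t_1+\cdots+t_i+1}(i)}\cdots y_{t_1+\cdots+t_{r-1}+s_r}^{h_{j,t_1+\cdots+t_{r-1}+s_r}(i)},
$$
with a unit $\delta_{i,j}\in S$ and nonnegative exponents on deeper-block variables, whereas Theorem \ref{SSM} demands the exact equalities $x_{t_1+\cdots+t_{i-1}+j}=y_{t_1+\cdots+t_{i-1}+j}$ for $s_i<j\le t_i$ (no unit, no monomial correction). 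Your second paragraph asserts that Theorem 4.8 already yields ``the prescribed block-triangular monomial form,'' which is precisely where the claim fails; and your third paragraph recounts the rank induction internal to the proof in \cite{CP}, which is beside the point since both you and the paper take that theorem as a black box.

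What is missing is the argument that removes the unit and the extra monomial factors. The paper does this by induction on the smallest index $m=t_1+\cdots+t_{i-1}+j$ (with $s_i<j\le t_i$) not yet in the required form: one performs further monoidal transforms $S(i,j,u,l)$ on $S$ along $\nu^*$ (permissible because $\nu^*(y_{t_1+\cdots+t_{i-1}+j})>\nu^*(y_{t_1+\cdots+t_{u-1}+l})$ for $i<u$), choosing the new exponents, using that every column of each matrix $(g_{\alpha\beta}(u))$ is nonzero, so that the absorbed monomial $M$ equals, up to a unit of $S$, a monomial in the block variables $x_{t_1+\cdots+t_{u-1}+v}$ with $1\le v\le s_u$; this makes it possible to define a matching monoidal transform $R_0\rightarrow R'$ along $\nu$ which factors through the blown-up $S'$, after which $x'_m=\gamma\delta_{i,j}y'_m$ for a unit $\gamma$, and a final change of variable $y''_m=\gamma\delta_{i,j}y'_m$ gives the exact equality while leaving the relations for the other $x$'s in monomial form. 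None of this appears in your proposal. (Also, the preliminary reductions you invoke via Lemmas \ref{Lemma10}, \ref{Crit} and \ref{Lemma4} are not needed for this theorem, whose hypotheses only require $S$ to dominate $R_0$, not to lie over it; those lemmas enter later, in the proof of Theorem \ref{ThmAJV}.)
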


The statement of Theorem \ref{SSM} differs from the statement of Theorem 4.8 \cite{CP} in that we have the stronger statement that
$$
x_{t_1+\cdots+t_{i-1}+j}=y_{t_1+\cdots+t_{i-1}+j}
$$
for $1\le i\le r$ (with the convention $t_1+\cdots+t_{i-1}=0$ if $i=0$) and for $s_i<j\le t_i$. We continue to have that
$\nu(x_{t_1+\cdots+t_{i-1}+1}),\ldots,\nu(x_{t_1+\cdots+t_{i-1}+s_i})$ is a basis of the $\QQ$-vector space $G(\nu(x_{t_1+\cdots+t_{i-1}+1}),\ldots,\nu(x_{t_1+\cdots+t_{i-1}+t_i}))\otimes_{\ZZ}\QQ$.

\begin{proof} We may assume that the conclusions of Theorem 4.8 \cite{CP} hold. For $i<k$, $1\le j\le t_i$, $1\le l\le t_k$, define a monoidal transform $S(i,j,k,l)$ from $S$ to $S'$ along $\nu^*$, where $S'$ has regular parameters $y_m'$ by
$$
y_m=\left\{\begin{array}{ll}
y_{t_1+\cdots+t_{i-1}+j}'y_{t_1+\cdots+t_{k-1}+l}&\mbox{ if }m=t_1+\cdots+t_{i-1}+j\\
y_m'&\mbox{ if }m\ne t_1+\cdots+t_{i-1}+j.
\end{array}\right.
$$
The monoidal transform $S(i,j,k,l)$ is along $\nu^*$ since $\nu^*(y_{t_1+\cdots+t_{i-1}+j})>\nu^*(y_{t_1+\cdots+t_{k-1}+l})$ as $i<k$.  

We will prove the theorem by induction on $m$, where  $m$ is the smallest index with $1\le m\le t_1+\cdots+t_r$ such that $x_m$ is not in the form required by Theorem \ref{SSM}. Then we have an expression $m=t_1+\cdots+t_{i-1}+j$ with $s_i<j\le t_i$. Thus (with the notation of Theorem 4.8 \cite{CP})
$$
x_{t_1+\cdots+t_{i-1}+j}=\delta_{i,j}y_{t_1+\cdots+t_{i-1}+j}y_{t_1+\cdots+t_i+1}^{h_{j,t_1+\cdots+t_i+1}(i)}\cdots y_{t_1+\cdots+t_{r-1}+s_r}^{h_{j,t_1+\cdots+t_{r-1}+s_r}(i)}
$$
with all $h_{j,k}(i)$ non negative integers, and $h_{j,k}(i)=0$ if $k\ne t_1+\cdots+t_{u-1}+v$ with $1\le v\le s_u$ for some $u$.  The rank of the $s_u\times s_u$ matrix $(g_{\alpha,\beta}(u))$ is $s_u$ for $1\le u\le r$. Thus each column of $(g_{\alpha,\beta}(u))$ is nonzero, and so there exists a sequence of monoidal transforms $S\rightarrow S'$ along $\nu^*$ of the type $S(i,j,u,l)$ with 
$i< u\le r$ and $1\le l\le s_u$ such that we have new regular parameters  $y_m'$ in $S'$ with
$$
y_m=\left\{\begin{array}{l}
y_{t_1+\cdots+t_{i-1}+j}'(y_{t_1+\cdots+t_i+1}')^{b_{t_1+\cdots+t_i+1}}\cdots (y_{t_1+\cdots+t_{r-1}+s_r}')^{b_{t_1+\cdots+t_{r-1}+s_r}}\\
\mbox{ if }m=t_1+\cdots+t_{i-1}+j\\
y_m'\mbox{ if }m\ne t_1+\cdots+t_{i-1}+j
\end{array}\right.
$$
such that $b_l$ are non negative integers with $b_{t_1+\cdots+t_{u-1}+v}=0$ if $s_u<v\le t_u$ for $i+1\le u\le r$  and
$$
x_m=\delta_{i,j}y_{t_1+\cdots+t_{i-1}+j}'M
$$
where 
$$
M=(y_{t_1+\cdots+t_i+1}')^{b_{t_1+\cdots+t_i+1}+h_{j,t_1+\cdots+t_i+1}(i)}\cdots
(y_{t+1+\cdots+t_{r-1}+s_r}')^{b_{t_1+\cdots+t_{r-1}+s_r}+h_{j,t_1+\cdots+t_{r-1}+s_r}(i)}
$$
if $m=t_1+\cdots+t_{i-1}+j$ and since each column of $(g_{l,m}(u))$ is nonzero,  we can choose the $b_{t_1+\cdots+t_{u-1}+v}$ so that there exist nonnegative integers $c_{t_1+\cdots+t_{u-1}+v}$ such that
$$
\prod(x_{t_1+\cdots+t_{u-1}+v})^{c_{t_1+\cdots+t_{u-1}+v}}=\gamma M
$$
where $\gamma$ is a unit in $S$ and the product is over $i\le u-1\le r-1$ and $1\le v\le s_u$.  
Further, the expression of $x_m$ in terms of the $y_l'$ is obtained from the expression of $x_m$ in terms of $y_l$ by replacing $y_l$ with $y_l'$ for $1\le l\le t_1+\cdots+t_r$ if $m\ne t_1+\cdots+t_{i-1}+j$.

Now we have a monoidal transform $R\rightarrow R'$ along $\nu$  which factors through $S'$ defined by
$$
x_m=\left\{\begin{array}{ll}
x_{t_1+\cdots+t_{i+1}+j}'\prod(x_{t_1+\cdots+t_{u-1}+v}')^{c_{t_1+\cdots+t_{u-1}+v}}&\mbox{ if } m=t_1+\cdots+t_{i-1}+j\\
x_m'&\mbox{ if } m\ne t_1+\cdots+t_{i-1}+j.
\end{array}\right.
$$
We obtain that 
$$
x_{t_1+\cdots+t_{i-1}+j}'=\gamma \delta_{i,j}y_{t_1+\cdots+t_{i-1}+j}'
$$
and if $m\ne t_1+\cdots+t_{i-1}+j$, then the expression for $x_m'$ is terms of the $y_l'$ is obtained by replacing the $y_l$ variables with  $y_l'$ in the expression for $x_m$ in terms of the $y_l$. Finally, we have new variables $y_l''$ in $S'$, obtained by letting
$$
y_l''=\left\{\begin{array}{ll}
\gamma\delta_{i,j}y_{t_1+\cdots+t_{i-1}+j}'
&\mbox{ if }l=t_1+\cdots+t_{i-1}+j\\
y_l'&\mbox{ if }l\ne t_1+\cdots+t_{i-1}+j,
\end{array}\right.
$$
proving the induction statement.

\end{proof}

We now summarize some results on toric rings from \cite{BG}.
Suppose that $M$ is a finitely generated submonoid (subsemigroup) of $\ZZ^n$ for some $n\ge 0$. Let
\begin{equation}\label{eqA12}
\tilde M_{\ZZ^n}=\{v\in \ZZ^n\mid mv\in M\mbox{ for some }m\in \ZZ_{>0}\}.
\end{equation}
We have that $\tilde M_{\ZZ^n}=(\RR_{\ge 0} M)\cap \ZZ^n$ (Proposition 2.2 \cite{BG}).

\begin{Proposition}\label{BG1}(Proposition 2.43 \cite{BG}) Suppose $v_1,\ldots,v_n\in \ZZ^n$ are linearly independent. Let
$$
{\rm par}(v_1,\ldots,v_n)=\{q_1v_1+\cdots+q_nv_n\mid 0\le q_i<1\mbox{ for }i=1,\ldots,n\}.
$$
Let $M=\ZZ_{\ge 0}v_1+\cdots+\ZZ_{\ge 0}v_n$ be the submonoid of $\ZZ^n$ generated by $v_1,\ldots,v_n$. Then
\begin{enumerate}
\item[a)] $\Lambda =\ZZ^n\cap {\rm par}(v_1,\ldots,v_n)$ is a system of generators of the $M$-module $\tilde M_{\ZZ^n}$; that is, $\tilde M_{\ZZ^n}=\cup_{x\in \Lambda}(x+M)$.
\item[b)] $(a+M)\cap(b+M)=\emptyset$ for $a,b\in \Lambda$ with $a\ne b$.
\item[c)] $|\Lambda|=[\QQ U\cap \ZZ^n:U]$ where $U$ is the sublattice of $\ZZ^n$ generated by $M$.
\end{enumerate}
\end{Proposition}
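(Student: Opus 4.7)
The plan is to exploit the fact that since $v_1,\ldots,v_n$ are $n$ linearly independent vectors in $\ZZ^n$, they form a $\QQ$-basis of $\QQ^n$, and in particular a $\ZZ$-basis of the subgroup $U=\sum_i \ZZ v_i$. Every vector in $\QQ^n$ then admits a unique expansion $w=\sum \alpha_i v_i$ with $\alpha_i\in\QQ$, and the decomposition $\alpha_i=\lfloor\alpha_i\rfloor+\{\alpha_i\}$ with $0\le\{\alpha_i\}<1$ is the only real tool needed.

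For part (a), I would first check that $\Lambda\subset \tilde M_{\ZZ^n}$: any $x=\sum q_i v_i\in\Lambda$ has rational $q_i$ (since $x\in\ZZ^n$ and the $v_i$ form a $\QQ$-basis), so clearing denominators gives $mx\in M$. Conversely, given $w\in\tilde M_{\ZZ^n}$, use $\tilde M_{\ZZ^n}=\RR_{\ge 0}M\cap\ZZ^n$ to write $w=\sum\alpha_i v_i$ with $\alpha_i\ge 0$. Setting $x:=\sum\{\alpha_i\}v_i$ gives $w=\sum\lfloor\alpha_i\rfloor v_i + x$ with $x\in \text{par}(v_1,\ldots,v_n)$; since $w$ and the floor-part sum both lie in $\ZZ^n$, so does $x$, hence $x\in\Lambda$ and $w\in x+M$.

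For part (b), suppose $a+m_1=b+m_2$ with $a,b\in\Lambda$ and $m_1,m_2\in M$. Then $a-b\in U$. Writing $a=\sum q_iv_i$, $b=\sum r_iv_i$ with $q_i,r_i\in[0,1)$, the uniqueness of integer coordinates in the $\ZZ$-basis $v_1,\ldots,v_n$ of $U$ forces each $q_i-r_i\in\ZZ\cap(-1,1)=\{0\}$, so $a=b$. For part (c), I would define $\phi\colon\Lambda\to (\QQ U\cap\ZZ^n)/U$ by $\phi(x)=x+U$; this is well-defined since $\Lambda\subset\RR_{\ge 0}v_1+\cdots+\RR_{\ge 0}v_n\subset\QQ U$. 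Injectivity follows exactly as in (b). For surjectivity, given $w\in\QQ U\cap\ZZ^n$ expanded as $\sum\alpha_i v_i$, the fractional-part vector $x=\sum\{\alpha_i\}v_i$ lies in $\text{par}$, lies in $\ZZ^n$ because $w$ does, hence lies in $\Lambda$, and satisfies $w-x\in U$.

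This is a classical fact about fundamental parallelepipeds of sublattices, so there is no real obstacle. The only point requiring care is verifying that the fractional-part vector belongs to $\ZZ^n$, which in each part relies on knowing that the given vector $w$ is itself in $\ZZ^n$ before we apply the decomposition.
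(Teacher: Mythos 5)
Your argument is correct, but note that the paper itself gives no proof of this statement: it is quoted verbatim as Proposition 2.43 of \cite{BG}, so the only comparison to make is with the standard fundamental-parallelepiped argument, which is exactly what you give. One small inaccuracy to repair: to see that $\phi$ is well defined you assert $\Lambda\subset \RR_{\ge 0}v_1+\cdots+\RR_{\ge 0}v_n\subset \QQ U$, and the second inclusion is false in general (an irrational nonnegative combination of the $v_i$ need not lie in $\QQ U$). The fix is immediate and strengthens the statement: since $v_1,\ldots,v_n$ are $n$ linearly independent vectors of $\ZZ^n$, they are a $\QQ$-basis of $\QQ^n$, so $\QQ U=\QQ^n$ and $\QQ U\cap \ZZ^n=\ZZ^n\supset\Lambda$; in particular part c) here simply computes $[\ZZ^n:U]$, which is $|{\rm Det}|$ of the matrix with rows $v_1,\ldots,v_n$, the quantity the paper uses via $|{\rm Det}(E)|=|{\rm Det}(A)|$. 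Everything else is sound: for a) you correctly use $\tilde M_{\ZZ^n}=(\RR_{\ge 0}M)\cap\ZZ^n$ and the floor/fractional-part decomposition, observing that the fractional-part vector lies in $\ZZ^n$ because $w$ and the integral part do; for b) the coordinate differences of two points of ${\rm par}(v_1,\ldots,v_n)$ lying in $U$ are integers in $(-1,1)$, hence zero; and for c) injectivity repeats b) while surjectivity is again the fractional-part decomposition, giving the bijection $\Lambda\to(\QQ U\cap\ZZ^n)/U$. (If you want to be complete, add the one-line remark that $x+M\subset\tilde M_{\ZZ^n}$ for $x\in\Lambda$, and that $\Lambda$ is finite since ${\rm par}(v_1,\ldots,v_n)$ is bounded, so the equality of cardinalities in c) is an equality of finite numbers.)
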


\begin{Lemma}\label{BG2} (Lemma 4.40 \cite{BG}) Suppose that $k$ is a ring, $M$ is a finitely generated submonoid of $\ZZ^n$ and $k[z_1,\ldots,z_n]$ is a polynomial ring over $k$. Then $k[z^v\mid v\in \tilde M_{\ZZ^n}]$ is the integral closure of $k[v\mid v\in M]$ in $k(z_1,\ldots, z_n)$.
\end{Lemma}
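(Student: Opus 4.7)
The plan is to verify the two inclusions separately. Write $A = k[z^v \mid v \in M]$ and $B = k[z^v \mid v \in \tilde M_{\ZZ^n}]$, and let $\bar A$ denote the integral closure of $A$ in $k(z_1,\ldots,z_n)$. (I will assume throughout that $k$ is at least a domain, since otherwise the target field $k(z_1,\ldots,z_n)$ and the phrase ``integral closure in'' need reinterpretation.)

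First I would show $B\subseteq \bar A$. Each generator $z^v$ with $v\in \tilde M_{\ZZ^n}$ satisfies $(z^v)^m = z^{mv} \in A$ for some $m\ge 1$ by the very definition of $\tilde M_{\ZZ^n}$, so $z^v$ is a root of the monic polynomial $X^m - z^{mv} \in A[X]$. Hence each $z^v$, $v\in\tilde M_{\ZZ^n}$, is integral over $A$, and the ring $B$ they generate is contained in $\bar A$.

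For the reverse inclusion $\bar A\subseteq B$, I would localize first. The Laurent polynomial ring $L = k[z_1^{\pm 1},\ldots,z_n^{\pm 1}]$ is a localization of $k[z_1,\ldots,z_n]$, hence (for $k$ a domain) a UFD and thus integrally closed in $k(z_1,\ldots,z_n)$. Since $A\subseteq L$, integrality over $A$ forces integrality over $L$ and hence containment in $L$, so $\bar A\subseteq L$. The ring $L$ carries a natural $\ZZ^n$-grading with $\deg(z^v)=v$ in which $A$ is a graded subring. Because the grading group $\ZZ^n$ is torsion-free, a standard fact from commutative algebra (see Bruns--Herzog, Chapter 1) says the integral closure of a graded subring of a $\ZZ^n$-graded domain is again $\ZZ^n$-graded. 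Thus $\bar A$ is homogeneous, and it suffices to check, for every $\ZZ^n$-homogeneous element $cz^v\in \bar A$ with $c\ne 0$, that $v\in \tilde M_{\ZZ^n}$.

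Given such a homogeneous element, take any monic relation $(cz^v)^m + a_{m-1}(cz^v)^{m-1} + \cdots + a_0 = 0$ with $a_j\in A$, and expand $a_j=\sum_{w\in M}\alpha_{j,w}z^w$. Comparing coefficients of $z^{mv}$ yields, in $k$, the identity
$$
c^m + \sum_{\substack{0\le j\le m-1\\ (m-j)v\in M}} \alpha_{j,(m-j)v}\, c^{j} = 0.
$$
If no index $m'=m-j$ with $1\le m'\le m$ produced an element $m'v$ of $M$, the sum would be empty and we would get $c^m=0$, forcing $c=0$ since $k$ is a domain — contrary to hypothesis. So some $m'\ge 1$ satisfies $m'v\in M$, which is exactly the statement $v\in \tilde M_{\ZZ^n}$. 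This finishes $\bar A\subseteq B$.

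The main obstacle I expect is the appeal to the graded-integral-closure fact invoked in the third paragraph: it is standard but not entirely elementary, and relies essentially on the torsion-freeness of $\ZZ^n$ (the analogous statement fails badly in the presence of torsion). A secondary technical nuisance is ensuring the hypotheses on $k$ are strong enough; the argument really uses that $k$ is a domain, both to make $L$ integrally closed in $k(z_1,\ldots,z_n)$ and to deduce $c=0$ from $c^m=0$.
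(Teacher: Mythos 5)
There is nothing in the paper to compare your argument with: the paper does not prove this statement, it simply quotes it from Bruns--Gubeladze (Lemma 4.40 of \cite{BG}) and uses it. Judged on its own, your proof is essentially the standard argument and is correct in the situation where the paper invokes the lemma (there $k$ is a field). The inclusion $k[z^v\mid v\in\tilde M_{\ZZ^n}]\subseteq\overline A$ via the monic polynomials $X^m-z^{mv}$ is right; the reduction to homogeneous elements, using that the integral closure of a graded subring of a domain graded by a torsion-free group is again graded, is a legitimate standard fact (for the $\ZZ^n$-graded version, Bourbaki, Commutative Algebra V, \S 1.8, or Huneke--Swanson, Theorem 2.3.2, are cleaner references than Bruns--Herzog Chapter 1); and the comparison of coefficients at the exponent $mv$ does force $m'v\in M$ for some $m'\ge 1$, i.e.\ $v\in\tilde M_{\ZZ^n}$, exactly as you say.

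Two caveats about hypotheses, one of which is a genuine slip. Your justification that the Laurent ring $L=k[z_1^{\pm1},\ldots,z_n^{\pm1}]$ is integrally closed in $k(z_1,\ldots,z_n)$ is wrong as stated: for a general domain $k$ the ring $k[z_1,\ldots,z_n]$ is not a UFD (that requires $k$ to be a UFD), and $L$ is integrally closed in $k(z_1,\ldots,z_n)$ precisely when $k$ is a normal domain. Relatedly, ``$k$ a domain'' is not enough for the lemma itself: for $k=\ZZ[\sqrt{5}\,]$ and $M=\ZZ^n_{\ge0}$ the element $(1+\sqrt{5})/2$ lies in $k(z_1,\ldots,z_n)$ and is integral over $k[z^v\mid v\in M]=k[z_1,\ldots,z_n]$, but it does not lie in $k[z^v\mid v\in\tilde M_{\ZZ^n}]=k[z_1,\ldots,z_n]$. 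So the hypothesis your argument (and the statement) really needs is that $k$ be an integrally closed domain, e.g.\ a field; with that adjustment every step of your proof goes through, and since the paper only applies the lemma over a field, this discrepancy is harmless for the application. (Also note the statement's ``$k[v\mid v\in M]$'' is a typo for $k[z^v\mid v\in M]$, as you correctly interpreted, and that finite generation of $M$ plays no role in either direction of your argument, which is fine.)
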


Suppose that $R_0\rightarrow S$ satisfies the conclusions of Theorem \ref{SSM} and that $R_0/m_{R_0}\cong S/m_S$. Then after replacing the $x_i$ with the product of $x_i$ times an appropriate unit in $R_0$, we may assume that $\delta_{i,j}\equiv 1\mbox{ mod }m_S$ for all $i,j$.  Let $A=(a_{ij})$ where 
\begin{equation}\label{eqA51}
x_i=\delta_i\prod_{j=1}^ny_j^{a_{ij}}
\end{equation}
 with $\delta_i\in S$ units satisfying $\delta_i\equiv 1\mbox{ mod }m_S$. As in Theorem 4.2 \cite{CP}, from the adjoint matrix of $A$, for all $i=t_1+\cdots+t_{a-1}+b$ with $1\le a\le r$ and $1\le b\le s_a$, we have expressions
\begin{equation}\label{eqA1}
f_i=\prod x_j^{b_{ij}}=(\prod\delta_j^{b_{ij}})y_i^e
\end{equation}
where  the products are over $j=t_1+\cdots+t_{u-1}+v$ with $1\le u\le r$ and $1\le v\le s_u$, all $b_{ij}\in \ZZ$ and $e=|\mbox{Det}(A)|$. Let 
$$
R=\overline{R_0[\Omega]}_{m_{\nu}\cap \overline{R_0[\Omega]}},
$$
where, 
$$
\Omega=\{f_i\mid i=t_1+\cdots+t_{u-1}+v\mbox{ for }1\le u\le r, 1\le v\le s_u\}
$$
and $\overline{R_0[\Omega]}$ denotes the integral closure of $R_0[\Omega]$ in $K$. Then $S$ dominates $R$ and $\sqrt{m_RS}=m_S$.
Let 
$$
\Theta=\{x_i\mid x_i=x_{t_1+\cdots+t_{u-1}+v}\mid 1\le u\le r, 1\le v\le s_u\}.
$$
We have that
$m_{\nu}\cap R_0[\Omega]=(x_1,\ldots,x_n,\Omega)$ and $k[x_1,\ldots,x_n]_{(x_1,\ldots,x_n)}\rightarrow R_0$ is unramifed, so 
$$
k[x_1,\ldots,x_n,\Omega]_{(x_1,\ldots,x_n,\Omega)}\rightarrow R_0[\Omega]_{m_{\nu}\cap R_0[\Omega]}
$$
is unramified as $(x_1,\ldots,x_n,\Omega)R_0[\Omega]_{m_{\nu}\cap R_0[\Omega]}$ is the maximal ideal of $R_0[\Omega]_{m_{\nu}\cap R_0[\Omega]}$. There exist Laurent monomials $M_1,\ldots, M_{\alpha}$ in the variables in $\Theta$ such that letting $\Sigma=\{M_1,\ldots, M_{\alpha}\}$, $k[x_1,\ldots,x_n,\Omega,\Sigma]$ is the integral closure of $k[x_1,\ldots,x_n,\Omega]$ in $k(x_1,\ldots,x_n)$ by Lemma  \ref{BG2}.

We have that $\nu(M_i)$ is positive for $1\le i\le \alpha$, since some power of $M_i$ is a monomial in elements of $\Omega$ and $\Theta$ by (\ref{eqA12}). Thus $m_{\nu}\cap R_0[\Omega,\Sigma] = (x_1,\ldots,x_n,\Omega,\Sigma)$. Now 
\begin{equation}\label{eqA21}
k[x_1,\ldots,x_n,\Omega,\Sigma]_{(x_1,\ldots,x_n,\Omega,\Sigma)}\rightarrow R_0[\Omega,\Sigma]_{(x_1,\ldots,x_n,\Omega,\Sigma)}
\end{equation}
is unramified, and thus by  Corollary 9.11, Expos\'e I \cite{SGAI},  $R_0[\Omega,\Sigma]_{(x_1,\ldots,x_n,\Omega,\Sigma)}$ is normal.
The elements of $\Omega$ and $\Sigma$ are Laurent monomials in the variables $\Theta$.

Recall that 
\begin{equation}\label{eqA3}
\{\nu(x_i)\mid x_i\in \Theta\}\mbox{ is a basis of the $\QQ$-vector space  }\Phi_{\nu}\otimes_{\ZZ}\QQ.
\end{equation}
Now (Proposition on page 48 \cite{F}) there exists a toric resolution of singularities of 

\noindent $\mbox{Spec}(k[x_1,\ldots,x_n,\Omega,\Sigma])$, so there exist Laurent monomials $\overline N=\{N_1,\ldots, N_{\beta}\}$ in the variables of $\Theta$ such that 
\begin{equation}\label{eqA4}
\nu(N_j)\ge 0\mbox{ for all }j,
\end{equation}
and $k[x_1,\ldots,x_n,\Omega,\Sigma, \overline N]$ is a regular  ring. By (\ref{eqA3}) and (\ref{eqA4}), we have that $\nu(N_j)>0$ for all $j$, so that 
$$
k[x_1,\ldots,x_n,\Omega,\Sigma,\overline N]\cap m_{\nu}=(x_1,\ldots,x_n, \Omega, \Sigma,\overline N).
$$
Thus $k[x_1,\ldots,x_n,\Omega,\Sigma,\overline N]_{(x_1,\ldots,x_n,\Omega, \Sigma,\overline N)}$ is a regular local ring which is dominated by $\nu$. Thus we have that
$$
k[x_1,\ldots,x_n,\Omega,\Sigma,\overline N]_{(x_1,\ldots,x_n,\Omega,\Sigma,\overline N)}\rightarrow
R_0[\Omega,\Sigma,\overline N]_{(x_1,\ldots,x_n,\Omega,\Sigma,\overline N)}
$$
is unramified and
$$
R_0[\Omega,\Sigma,\overline N]_{(x_1,\ldots,x_n,\Omega,\Sigma,\overline N)}
$$
is a regular local ring which is dominated by $\nu$. 

Now each $N_j=\gamma_j\tilde N_j$ where $\gamma_j\in S$ is a unit and $\tilde N_j$ is a Laurent monomial in the variables
$$
\tilde \Theta=\{y_i\mid y_i=y_{t_1+\cdots+t_{u-1}+v}\mid 1\le u\le r, 1\le v\le s_u\}.
$$
 Let $\tilde N=\{\tilde N_1,\ldots,\tilde N_{\beta}\}$.
Then $S[\overline N]_{(y_1,\ldots,y_n,\overline N)}=S[\tilde N]_{(y_1,\ldots,y_n,\tilde N)}$ is dominated by $\nu^*$. By Lemma \ref{BG2}, by adding finitely many more Laurent monomials $\tilde P=\{\tilde P_1,\ldots,\tilde P_{\gamma}\}$ in the variables $\tilde \Theta$, we have that 
$k[y_1,\ldots,y_n,\tilde N,\tilde P]$ is normal and is dominated by $\nu^*$, 
$k[y_1,\ldots,y_n,\tilde N]\rightarrow k[y_1,\ldots,y_n,\tilde N,\tilde P]$ is finite and 
$$
k[y_1,\ldots, y_n,\tilde N,\tilde P]_{(y_1,\ldots,y_n,\tilde N,\tilde P)}\rightarrow S[\tilde N,\tilde P]_{(y_1,\ldots,y_n,\tilde N,\tilde P)}
$$
is unramified.  Further,
$S[\tilde N,\tilde P]_{(y_1,\ldots,y_n,\tilde N,\tilde P)}$ is normal by Corollary  9.11, Expos\'e I \cite{SGAI}, lies over $S[\tilde N]_{(y_1,\ldots,y_n,\tilde N)}$ and is dominated by $\nu^*$. Thus $S[\tilde N,\tilde P]_{(y_1,\ldots,y_n,\tilde N,\tilde P)}$ is the normal local ring of $K^*$ which is dominated by $\nu^*$ and lies over $R_0[\Omega,\Sigma, \overline N]_{(x_1,\ldots,x_n,\Omega,\Sigma,\overline N)}$.

Let $C=k[x_1,\ldots,x_n,\Omega,\Sigma,\overline N]$. The $k$-algebra $C$ is $\ZZ^n$-graded by the grading $\deg x_i=e_i\in \ZZ^n$ for $1\le i\le n$, where $e_i$ is the vector with a 1 in the $i$-th place and zeros everywhere else. Let $m=(x_1,\ldots,x_n,\Omega,\Sigma,\overline N)$ be the $\ZZ^n$-graded maximal ideal of $C$. Since $C$ is an $n$-dimensional regular  ring, $\dim_k m/m^2=n$. Let $T_1,\ldots,T_n\in \{x_1,\ldots,x_n\}\cup\Omega\cup\Sigma\cup \overline N$ form a $k$-basis of $m/m^2$. A nonzero homogeneous element $U$ of $C$ (with respect to the 
$\ZZ^n$-grading)  is uniquely determined up to multiplication by a nonzero element of $k$. Thus for $l\in\NN$, a nonzero homogeneous element $U$ of $m^l\setminus m^{l+1}$ is a monomial in $T_1,\ldots,T_n$ times a nonzero element of $k$. Thus $C=k[T_1,\ldots,T_n]$. Since $\dim C=n$, $T_1,\ldots,T_n$ are algebraically independent over $k$, and so $C$ is a polynomial ring over $k$ in the variables $T_1,\ldots,T_n$.

Since $\delta_i\equiv 1\mbox{ mod }m_S$ for all $i$ in (\ref{eqA51}), and since a complete local ring is Henselian, and $k$ has characteristic zero, there exist units $\epsilon_i\in \hat S$ for $1\le i\le n$ such that  $\epsilon_i=1$ if $y_i\not\in \tilde\Theta$, and setting $z_i=\epsilon_iy_i$,
\begin{equation}\label{eqA2}
x_i=\prod z_j^{a_{ij}}\mbox{ for }x_i\in \Theta\mbox{ and }x_i=y_i=z_i\mbox{ for }x_i\not\in \Theta
\end{equation}
where the product is over $z_j$ with $y_j\in \tilde\Theta$.
Let
\begin{equation}\label{eqA22}
R_1=R_0[\Omega,\Sigma,\overline N]_{(x_1,\ldots,x_n,\Omega,\Sigma,\overline N)}\mbox{ and }
S_1=S[\tilde N,\tilde P]_{(y_1,\ldots,y_n,\tilde N,\tilde P)}.
\end{equation}
Define Laurent monomials $\hat N$ and $\hat P$ in the variables $\{z_i\mid y_i\in \tilde \Theta\}$ by replacing the variables $y_i$ in the monomials in $\tilde N$ and $\tilde P$ with the $z_i$. Then $k[z_1,\ldots,z_n,\hat N,\hat P]$ is normal, since it is normal when we replace the $z_i$ variables with the $y_i$ variables. We have a commutative diagram of $k$-algebra homomorphisms 
$$
\begin{array}{ccc}
R_0&\rightarrow &\hat S\\
\uparrow&&\uparrow\\
k[x_1,\ldots,x_n]&\rightarrow& k[z_1,\ldots,z_n]
\end{array}
$$
where the map $k[z_1,\ldots,z_n]\rightarrow \hat S$ is defined  by $z_i\mapsto \epsilon_iy_i$ for $1\le i\le n$ and the map $k[x_1,\ldots,x_n]\rightarrow k[z_1,\ldots,z_n]$ is defined by $x_i\mapsto \prod_jz_j^{a_{ij}}$for $1\le i\le n$. We have that the induced homomorphisms $k[x_1,\ldots,x_n]_{(x_1,\ldots,x_n)}\rightarrow R_0$ and $k[z_1,\ldots,z_n]_{(z_1,\ldots,z_n)}\rightarrow \hat S$ are unramified. 

We have an induced commutative diagram of $k$-algebra homomorphisms
$$
\begin{array}{ccc}
R_1&\rightarrow &\hat S_1\\
\uparrow&&\uparrow\\
k[T_1,\ldots,T_n]&\rightarrow &k[z_1,\ldots,z_n,\hat N,\hat P]
\end{array}
$$
where the induced homomorphisms $k[T_1,\ldots,T_n]_{(T_1,\ldots,T_n)}\rightarrow R_1$ and 
$$
k[z_1,\ldots,z_n,\hat N,\hat P]_{(z_1,\ldots,z_n,\hat N,\hat P)}\rightarrow \hat S_1
$$
 are unramified and the $T_j$ are Laurent monomials in the $z_i$ variables.

We have that $k[z_1,\ldots,z_n,\hat N,\hat P]$ is finite over $k[z_1,\ldots,z_n,\hat N]$ since $k[y_1,\ldots,y_n,\tilde N,\tilde P]$ is finite over $k[y_1,\ldots,y_n,\tilde N]$. The $k$-algebra $k[z_1,\ldots,z_n]$ is finite over $k[x_1,\ldots,x_n,\Omega]$ since  $f_i=z_i^e$ by (\ref{eqA1}) and by (\ref{eqA2}). Thus $k[z_1,\ldots,z_n,\hat N,\hat P]$ is finite over $k[T_1,\ldots,T_n]=k[x_1,\ldots,x_n,\Omega,\Sigma,\overline N]$ and so $k[z_1,\ldots,z_n,\hat N,\hat P]$ is the integral closure of $k[T_1,\ldots,T_n]$ in $k(z_1,\ldots,z_n)$. Let $D=k[z_1,\ldots,z_n,\hat N,\hat P]$. Express
$T_i=\prod z_j^{e_{ij}}$ for $1\le i\le n$. Let  $E=(e_{i,j})$.

Recall the notation introduced before the statement of Proposition \ref{BG1}. Let $M$ be the submonoid of $\ZZ^n$ generated by
$$
\{(e_{i,1},\ldots,e_{i,n})\mid 1\le i\le n\}.
$$
Then $D=k[z^v\mid v\in \tilde M_{\ZZ}]$ by Lemma \ref{BG2}. Now $k[x_1,\ldots,x_n]\rightarrow k[T_1,\ldots,T_n]$ is birational and the $T_i$ are Laurent monomials in $x_1,\ldots,x_n$, so there exists an $n\times n$ matrix $Q=(q_{ij})$ such that $x_i=\prod_jT_j^{q_{ij}}$ with $|{\rm Det}(Q)|=1$. Thus $A=QE$, and so $|{\rm Det}(E)|=|{\rm Det}(A)|$. We have
\begin{equation}\label{eqA5}
|{\rm Det}(E)|=[\ZZ^n:E\ZZ^n]=[\ZZ^n:A\ZZ^n]=|{\rm Det}(A)|.
\end{equation}
 By Proposition \ref{BG1}, there exists a subset $\Lambda$ of $\ZZ^n$ such that $D$ is a free $k[T_1,\ldots,T_n]$-module with $k[T_1,\ldots,T_n]$-basis $\{z^{\sigma}\mid \sigma\in \Lambda\}$ and $|\Lambda|=[\ZZ^n:E\ZZ^n]$. We have that $z^{\sigma}=\lambda_{\sigma}y^{\sigma}$ where $\lambda_{\sigma}\in \hat S_1$  are units. $y^{\sigma}=\lambda_{\sigma}^{-1}z^{\sigma}\in \hat S_1\cap K^*=S_1$ by Lemma 2 \cite{LU}. 
 There exist $g_1,\ldots,g_c\in R_1$ such that $\{\overline g_1,\ldots,\overline g_c\}$ is a $k$-basis of $S_1/m_{S_1}=R_1/m_{R_1}$, where $\overline g_i$ is the class of $g_i$ in $R_1/m_{R_1}$. Now $D_{(z_1,\ldots,z_n,\hat N,\hat P)}\rightarrow \hat S_1$ is unramified, so $m_D\hat S_1=m_{\hat S_1}$, where $m_D=(z_1,\ldots,z_n,\hat N,\hat P)$ and so 
 $$
 \hat S_1=\sum_{i=1}^cg_i\hat D+m_D\hat S_1.
 $$
 We have that $\hat S_1$ is a finite $\hat D$-module, so $\hat S_1=\sum_{i=1}^cg_i\hat D$ by Nakayama's lemma. Hence 
 $$
 \hat S_1=\sum_{i=1}^c\sum_{\sigma \in \Lambda}g_i(\lambda_{\sigma}y^{\sigma})\hat D
 =\sum_{\sigma\in\Lambda}(\lambda_{\sigma}y^{\sigma})\hat R_1.
 $$
 Now $S_1$ lies over $R_1$, so there exists $t$ such that $m_{S_1}^t\subset m_{R_1}S_1$ which implies $m_{\hat S_1}^t\subset m_{\hat R_1}\hat S_1$. Thus  there exist $\tau_{\sigma}\in S_1$ such that $\tau_{\sigma}\equiv \lambda_{\sigma}\mbox{ mod }m_{\hat S_1}^t$, so that $\hat S_1=\sum_{\sigma\in \Lambda}(\tau_{\sigma}y^{\sigma})\hat R_1+m_{\hat R_1}\hat S_1$. This  implies $\hat S_1=\sum(\tau_{\sigma}y^{\sigma})\hat R_1$ by Nakayama's lemma, since $\hat S_1$ is a finitely generated $\hat R_1$-module. Now $S_1/m_{S_1}^t\cong \hat S_1/m_{S_1}^t\hat S_1$ and $m_{S_1}^t\subset m_{R_1}S_1$ which implies $S_1=\sum_{\sigma\in \Lambda}(\tau_{\sigma}y^{\sigma})R_1+m_{R_1}S_1$. If $R_1\rightarrow S_1$ is finite, we then have 
 \begin{equation}\label{eqA52}
 S_1=\sum_{\sigma\in \Lambda}(\tau_{\sigma}y^{\sigma})R_1
 \end{equation}
  by Nakayama's lemma.

\begin{Proposition}\label{PropVG} With the above notation, assume that $R_1\rightarrow S_1$ is finite,
\begin{equation}\label{eqA6}
e=[\Phi_{\nu^*}/\Phi_{\nu}]=|{\rm Det}(A)|\mbox{ and }
\end{equation}
\begin{equation}\label{eqA7}
\ZZ^n/A^t\ZZ^n\cong \Phi_{\nu^*}/\Phi_{\nu}
\end{equation}
by the map $(b_1,\ldots,b_n)\mapsto b_1\nu^*(y_1)+\cdots+b_n\nu^*(y_n)$. Then
$$
\{\nu^*(\tau_{\sigma} y^{\sigma})|\sigma\in \Lambda\}
$$
is a complete set of representatives of the cosets of $\Phi_{\nu}$  in $\Phi_{\nu^*}$.
\end{Proposition}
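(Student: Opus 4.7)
The plan is to handle the proposition in three short steps: first reduce to a lattice-theoretic assertion by observing that the $\tau_\sigma$ are units, then identify the relevant lattice, and finally invoke Proposition~\ref{BG1} to finish.

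For the first step I would note that $\lambda_\sigma=\prod_j\epsilon_j^{\sigma_j}$ is a unit in $\hat S$ (each $\epsilon_j$ is a unit there), and by construction $\tau_\sigma\in S_1$ satisfies $\tau_\sigma\equiv\lambda_\sigma\pmod{m_{\hat S_1}^t}$ for some $t\ge 1$, so $\tau_\sigma$ is a unit in $\hat S_1$ and hence in $S_1$. Therefore $\nu^*(\tau_\sigma)=0$, and for each $\sigma=(\sigma_1,\ldots,\sigma_n)\in\Lambda$ we have
\[
\nu^*(\tau_\sigma y^\sigma)=\nu^*(y^\sigma)=\sum_{j=1}^n\sigma_j\,\nu^*(y_j).
\]
By hypothesis (\ref{eqA7}), the homomorphism $\phi\colon\ZZ^n\to\Phi_{\nu^*}/\Phi_\nu$ sending $(b_1,\ldots,b_n)$ to $\sum b_j\nu^*(y_j)\bmod\Phi_\nu$ is surjective with kernel $A^t\ZZ^n$. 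Thus the proposition reduces to showing that $\Lambda$ is a complete set of coset representatives of $\ZZ^n/A^t\ZZ^n$.

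For the second step I would replace $A^t\ZZ^n$ by $E^t\ZZ^n$. Because $A=QE$ with $|\det Q|=1$, the matrix $Q$ is unimodular, so $A^t\ZZ^n=E^tQ^t\ZZ^n=E^t\ZZ^n$. The rows $v_i=(e_{i,1},\ldots,e_{i,n})$ of $E$ are $\QQ$-linearly independent since $\det E\ne 0$, and the lattice they generate is precisely $U=E^t\ZZ^n$. By Proposition~\ref{BG1}, $\Lambda=\ZZ^n\cap\mathrm{par}(v_1,\ldots,v_n)$ has $|\Lambda|=[\ZZ^n:E^t\ZZ^n]=|\det E|=|\det A|=e$ elements, matching $|\Phi_{\nu^*}/\Phi_\nu|$ by (\ref{eqA5}) and (\ref{eqA6}).

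To finish, I would check directly that the $|\Lambda|$ elements of $\Lambda$ represent distinct cosets of $E^t\ZZ^n$ and exhaust them. For surjectivity, any $\tau\in\ZZ^n$ has a unique expansion $\tau=\sum q_iv_i$ with $q_i\in\QQ$; splitting $q_i=\lfloor q_i\rfloor+\{q_i\}$ yields $\tau\equiv\sum\{q_i\}v_i\pmod{E^t\ZZ^n}$, and $\sum\{q_i\}v_i=\tau-\sum\lfloor q_i\rfloor v_i\in\ZZ^n\cap\mathrm{par}(v_1,\ldots,v_n)=\Lambda$. For injectivity, if $\sigma,\sigma'\in\Lambda$ differ by an element of $E^t\ZZ^n$, writing them as $\sum r_iv_i$ and $\sum r_i'v_i$ with $r_i,r_i'\in[0,1)\cap\QQ$, linear independence of the $v_i$ forces each $r_i-r_i'$ into $\ZZ\cap(-1,1)=\{0\}$, so $\sigma=\sigma'$. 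Combined with the first step, this shows $\phi$ sends $\Lambda$ bijectively onto $\Phi_{\nu^*}/\Phi_\nu$, as required. The argument is really bookkeeping once the toric setup is available; the only spots needing attention are the identification $A^t\ZZ^n=E^t\ZZ^n$, where unimodularity of $Q$ enters, and the transfer of the unit property from $\lambda_\sigma$ to $\tau_\sigma$.
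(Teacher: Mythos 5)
Your proof is correct, and it follows the same overall strategy as the paper: use that the $\tau_\sigma$ are units in $S_1$ (so $\nu^*(\tau_\sigma y^\sigma)=\nu^*(y^\sigma)$) to reduce, via the isomorphism (\ref{eqA7}), to a statement about $\Lambda$ and the lattice $A^t\ZZ^n=E^t\ZZ^n$. Where you diverge is in how that lattice statement is handled. The paper proves only that distinct $\sigma_1,\sigma_2\in\Lambda$ give values in distinct cosets, arguing at the ring level ($y^{\sigma_1-\sigma_2}=ux^\lambda$, hence $z^{\sigma_1-\sigma_2}=x^\lambda$, then Proposition \ref{BG1} b)), and then gets completeness by the cardinality count $|\Lambda|=|{\rm Det}(E)|=|{\rm Det}(A)|=e$ from (\ref{eqA5}) and (\ref{eqA6}). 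You instead reprove directly, from the parallelepiped description $\Lambda=\ZZ^n\cap{\rm par}(v_1,\ldots,v_n)$, that $\Lambda$ is a full set of coset representatives of $\ZZ^n/E^t\ZZ^n$ (floor/fractional-part decomposition for surjectivity, linear independence of the $v_i$ for injectivity), and you transport this bijectively through (\ref{eqA7}). This makes your argument purely lattice-theoretic, and it in fact only uses hypothesis (\ref{eqA7}); the equality (\ref{eqA6}) enters your write-up only as a consistency check, whereas the paper needs it for the counting step. The two points you flag — $A^t\ZZ^n=E^t\ZZ^n$ via unimodularity of $Q$, and the transfer of the unit property from $\lambda_\sigma$ to $\tau_\sigma$ through the congruence modulo $m_{\hat S_1}^t$ — are handled correctly and are exactly the facts the paper uses implicitly.
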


\begin{proof} Assume that $\sigma_1,\sigma_2\in \Lambda$ and $\nu^*(\tau_{\sigma_1}y^{\sigma_1})-\nu^*(\tau_{\sigma_2}y^{\sigma_2})\in \Phi_{\nu}$. Then $\nu^*(y^{\sigma_1-\sigma_2})\in \Phi_{\nu}$. By (\ref{eqA7}), $y^{\sigma_1-\sigma_2}=ux^{\lambda}$ for a suitable unit $u\in S$ and $\lambda\in \ZZ^n$, and so $z^{\sigma_1-\sigma_2}=x^{\lambda}$.
Thus, writing $\lambda$ as a difference of elements of $M$,  $\sigma_1=\sigma_2$ by Proposition \ref{BG1} b).  The proposition now follows since 
$$
|\Lambda|=|{\rm Det}(E)|=|{\rm Det}(A)|=e
$$
by (\ref{eqA5}) and (\ref{eqA6}).
\end{proof}

We now give the proof of Theorem \ref{ThmAJV}. Let notation be as in the statement of the theorem. Let $\overline K$ be a Galois closure of $K\rightarrow K^*$, and $\overline\nu$ be an extension of $\nu^*$ to $\overline K$. Let $K^s=\overline K^{G^s(V_{\overline\nu}/V_{\nu})}$, $K^i=\overline K^{G^i(V_{\overline\nu})/V_{\nu})}$ and $(K^*)^s=\overline K^{G^s(V_{\overline\nu}/V_{\nu^*})}$. We have that $(K^*)^s=K^*K^s$ since
$$
G(\overline K/K^*K^s)=G(\overline K/K^*)\cap G^s(V_{\overline \nu}/V_{\nu})=G^s(V_{\overline \nu}/V_{\nu^*}).
$$
Let $K'=K^i\cap (K^*)^s$ and $\nu'=\overline\nu|K'$. Let $(\nu^*)^s=\overline\nu|(K^*)^s$.
We have a commutative diagram of field extensions
$$
\begin{array}{ccccccc}
&&K^s&\rightarrow&K'&\rightarrow&K^i\\
&\nearrow&&\searrow&\downarrow&&\downarrow\\
K&&&&(K^*)^s&\rightarrow & \overline K.\\
&\searrow&&\nearrow\\
&&K^*.
\end{array}
$$
Let $H=G(\overline K/K')$, which is the subgroup of $G(\overline K/K)$ which is generated by $G^i(V_{\overline \nu}/V_{\nu})$ and $G^s(V_{\overline\nu}/V_{\nu^*})$.
If $\sigma\in H$, we have that $\sigma:V_{\overline\nu}\rightarrow V_{\overline \nu}$. Thus $V_{\nu'}=V_{\overline\nu}^H$.
We have a commutative diagram
$$
\begin{array}{ccc}
V_{\overline\nu}&\stackrel{\sigma}{\rightarrow}&V_{\overline\nu}\\
\downarrow&&\downarrow\\
V_{\overline\nu}/m_{\overline\nu}&\stackrel{\overline\sigma}{\rightarrow}&V_{\overline\nu}/m_{\overline\nu}
\end{array}
$$
for $\sigma\in H$ where $\overline\sigma$ is the induced homomorphism, so $H$ acts on $V_{\overline\nu}/m_{\overline\nu}$, and $V_{\nu'}/m_{\nu'}=(V_{\overline\nu}/m_{\overline\nu})^H$. 
Now $\sigma\in G^i(V_{\overline\nu}/V_{\nu})$ implies $\overline\sigma:V_{\overline\nu}/m_{\overline\nu}\rightarrow V_{\overline\nu}/m_{\overline\nu}$ is the identity map (by Theorem 1.48 \cite{RTM}). Thus
$$
(V_{\overline\nu}/m_{\overline\nu})^H=(V_{\overline\nu}/m_{\overline\nu})^{G^s(V_{\overline\nu}/V_{\nu^*})}
=V_{(\nu^*)^s}/m_{(\nu^*)^s}=V_{\nu^*}/m_{\nu^*}
$$
by Theorems 1.47 and 1.48 \cite{RTM}. Thus
\begin{equation}\label{eqA55}
V_{\nu'}/m_{\nu'}=V_{(\nu^*)^s}/m_{(\nu^*)^s}=V_{\nu^*}/m_{\nu^*}.
\end{equation}

We have $\Phi_{\overline \nu | K'}=\Phi_{\nu}$ and $\Phi_{\overline \nu |(K^*)^s}=\Phi_{\nu^*}$ (by Theorem 23, page 71 \cite{ZS2} or  Proposition \ref{PropSplit} and Proposition \ref{PropInert}). By 
Lemmas \ref{Lemma8}, \ref{Lemma4} and Proposition \ref{PropSplit},
 there exists a normal algebraic local ring $R_2$ of $K$ which is dominated by $\nu$ and dominates $\tilde R$ such that if $R$ is a normal algebraic local ring of $K$ which dominates $R_2$, then letting $S$ be the normal local ring of $K^*$ which is dominated by $\nu^*$ and lies over $R$ and $\overline R$ be the normal local ring of $\overline K$ which is dominated by $\overline\nu$ and lies over $R$, we have that $G^s(\overline R/R)=G^s(V_{\overline\nu}/V_{\nu})$, $G^s(\overline R/S)=G^s(V_{\overline\nu}/V_{\nu^*})$, 
\begin{equation}\label{eqA23}
{\rm gr}_{\nu^*}(S)\cong{\rm gr}_{\overline \nu}(S^s)\mbox{ and }{\rm gr}_{\nu}(R)\cong {\rm gr}_{\overline\nu}(R^s)
\end{equation}
 where $S^s$ is the local ring of $(K^*)^s$ which lies over $S$ and $R^s$ is the local ring of $K^s$ which lies over $R$. Further, $R\rightarrow R^s$ and $S\rightarrow S^s$ are unramified. 
 
 Letting $R'$ be the normal local ring of $K'$ which is dominated by $\nu'$ and lies over $R$, we may assume by Lemmas \ref{Lemma8}, \ref{Lemma9} and \ref{Lemma4} that $R_2$ is such that (for $R$ which dominates $R_2$)
 $$
 G^i(\overline R/R')=G^i(V_{\overline\nu}/V_{\nu'})
 $$
 and
 $$
 G^i(\overline R/S^s)=G^i(V_{\overline\nu}/V_{(\nu^*)^s}).
 $$
 Thus $[V_{\overline\nu}/m_{\overline \nu}:V_{\nu'}/m_{\nu'}]=[\overline R/m_{\overline R}:R'/m_{R'}]$ and
$$
 [V_{\overline \nu}/m_{\overline\nu}:V_{(\nu^*)^s}/m_{(\nu^*)^s}]=[\overline R/m_{\overline R}:S^s/m_{S^s}]
 $$
  by Theorem 1.48 \cite{RTM}, and so by (\ref{eqA55}),
 \begin{equation}\label{eqA53}
 [S^s/m_{S^s}:R'/m_{R'}]=[V_{(\nu^*)^s}/m_{(\nu^*)^s}:V_{\nu'}/m_{\nu'}]=1.
 \end{equation}
 We have that ${\rm gr}_{\nu'}(R')\cong{\rm gr}_{\nu}(R)\otimes_{R/m_R}S/m_S$ by Proposition \ref{PropUnR}.

By Theorem 6.1  \cite{CP} and Theorem 4.10 \cite{CP}, there exists a normal algebraic local ring $F_1$ of $K^s$ such that if $F\rightarrow G$ is a dominant map of regular algebraic local rings of $K^s$ and $(K^*)^s$ respectively such that $F$ dominates $F_1$ and $F\rightarrow G$ satisfies the conclusions of Theorem \ref{SSM}, then 
\begin{equation}\label{eqA8}
|{\rm Det}(A)|=e=[\Phi_{\nu^*}/\Phi_{\nu}]\mbox{ and }
\end{equation}
\begin{equation}\label{eqA9}
\ZZ^n/A^t\ZZ^n\cong \Phi_{\nu^*}/\Phi_{\nu}
\end{equation}
by the map $(b_1,\ldots,b_n)\mapsto b_1\nu^*(y_1)+\cdots+b_n\nu^*(y_n)$, where $A$ is the matrix of exponents of the conclusions of Theorem \ref{SSM}.

By Lemma \ref{Lemma4}, there exists a normal algebraic local ring $R_3$ of $K$ which is dominated by $\nu$, such that $R_3$ dominates $R_2$ and if $R$ is a normal algebraic local ring of $K$ which dominates $R_3$ and is dominated by $\nu$ and $R^s$ is the local ring of $K^s$ which is dominated by $\overline\nu$ and lies over $R$, then $R^s$ dominates $F_1$.

Now suppose that $R_0$ is an algebraic regular local ring of $K$ which dominates $R_3$ and is dominated by $\nu$, and that $R_0\rightarrow S$ satisfies the conclusions of Theorem \ref{SSM}, where $S$ is a local ring of $K^*$. We then construct a commutative diagram
$$
\begin{array}{ccc}
R_1&\rightarrow&S_1\\
\uparrow&&\uparrow\\
R_0&\rightarrow&S
\end{array}
$$
where $R_1$ and $S_1$ are defined by (\ref{eqA22}). Let
$$
\begin{array}{ccc}
R_1'&\rightarrow&S_1^s\\
\uparrow&&\uparrow\\
R_0'&\rightarrow&S^s
\end{array}
$$
be the sequence of normal algebraic local rings in $K'$, respectively $(K^*)^s$ lying over these rings. We have that $R_0\rightarrow R_0'$, $R_1\rightarrow R_1'$, $S\rightarrow S^s$ and $S_1\rightarrow S_1^s$ are unramified by Proposition \ref{PropUnR}, and by (\ref{eqA53}), 
\begin{equation}\label{eqA54}
S^s/m_{S^s}=R_0'/m_{R_0'}\mbox{ and }S_1^s/m_{S_1^s}=R_1'/m_{R_1'}.
\end{equation}
Now $R_1'\rightarrow S_1^s$ is finite since $S_1^s$ is the unique local ring of $(K^*)^s$ which lies over $R_1'$ by Proposition 1.46 \cite{RTM}, as $G^s(V_{\overline \nu}/V_{\nu})=G^s(\overline{R_1}/R_1)$ where $\overline{R_1}$ is the local ring of $\overline K$ which dominates $R_1$ and is dominated by $\overline\nu$. Thus
$$
S_1^s=\sum_{\sigma\in\Lambda}(\tau_{\sigma}y^{\sigma})R_1'
$$
by (\ref{eqA52}) and (\ref{eqA54}),  and $\{\tau_{\sigma}y^{\sigma}\}_{\sigma\in \Lambda}$ is a complete set of representatives of $\Phi_{\nu^*}/\Phi_{\nu}$ by (\ref{eqA8}), (\ref{eqA9}) and Proposition \ref{PropVG}.
In particular, if $f\in S_1^s$, then we have an expression 
\begin{equation}\label{eqA40}
f=\sum g_{\sigma}(\tau_{\sigma}y^{\sigma})
\end{equation}
with $g_{\sigma}\in R_1'$ and  $\overline \nu(g_{\sigma}\tau_{\sigma}y^{\sigma})$ are all distinct for the terms with $g_{\sigma}\ne 0$. Thus
\begin{equation}\label{eqA11}
\overline\nu(f)=\min\{\overline\nu(g_{\sigma})+\overline\nu(\tau_{\sigma}y^{\sigma})\mid \sigma\in \Lambda\},
\end{equation}
and  the expression (\ref{eqA40}) is unique. Thus
\begin{equation}\label{eqA10}
S_1^s\cong \bigoplus_{\sigma\in\Lambda}R_1'\tau_{\sigma}y^{\sigma}.
\end{equation}

Thus, since ${\rm in}_{\overline \nu}(\tau_{\sigma})\in S_1/m_{S_1}\cong R_1'/m_{R_1'}$ for all $\sigma$, 
$$
{\rm gr}_{\nu^*}(S_1)\cong {\rm gr}_{\overline \nu}(S_1^s)\cong \oplus_{\sigma\in \Lambda}{\rm gr}_{\overline \nu}(R_1'){\rm in}_{\overline \nu}(y^{\sigma})\cong \oplus_{\sigma\in \Lambda}({\rm gr}_{\nu}(R_1)\otimes_{R_1/m_{R_1}}S_1/m_{S_1}){\rm in }_{\nu^*}(y^{\sigma})
$$
by (\ref{eqA23}) and Proposition \ref{PropUnR}, establishing Theorem \ref{ThmAJV}.

\vskip .2truein
We now prove Proposition \ref{PropAJV2}. Let notation and assumptions be as in the statement of Proposition \ref{PropAJV2}, and continue with the notation of the proof of Theorem \ref{ThmAJV}. Let $L={\rm QF}(R_1^s)=K^s$ and $M={\rm QF}(S_1^s)=(K^*)^s$. Since $k'\cong V_{\nu}/m_{\nu}$ is algebraically closed, $G^i(V_{\overline \nu}/V_{\nu^*})=G^s(V_{\overline \nu}/V_{\nu^*})$ and $G^i(V_{\overline \nu}/V_{\nu})=G^s(V_{\overline \nu}/V_{\nu})$. 
Since $k'$ has characteristic zero, the corollary on page 77 \cite{ZS2} implies that 
$G^i(V_{\overline \nu}/V_{\nu})\cong \Phi_{\overline\nu}/\Phi_{\nu}$, which is Abelian, so $(K^*)^i$ is Galois over $K^i$. Now $G((K^*)^i/K^i)=G^i((K^*)^i/K^i)$, so
$$
G(M/L)=G((K^*)^i/K^i)\cong \Phi_{\overline \nu|(K^*)^i}/\Phi_{\overline\nu|K^i}\cong \Phi_{\nu^*}/\Phi_{\nu},
$$
again by the corollary on page 77 \cite{ZS2} and Theorem 23, page 71 \cite{ZS2} or Proposition \ref{PropSplit}. Let $G=G(M/L)$. The ring $S_1^s$ is the integral closure of $R_1^s$ in $M$ so $G$ acts on $S_1^s$ and $(S_1^s)^G=R_1^s$ since $R_1^s$ is normal. 
Now $\overline \nu(\sigma(f))=\overline \nu(f)$ for all $\sigma\in G$ and $f\in S_1^s$ by (3) on page 68 \cite{ZS2} (since $G=G^s(M/L)$) so $G$ acts on ${\rm gr}_{\overline \nu}(S_1^s)$. We will now show that $({\rm gr}_{\overline \nu}(S_1^s))^G={\rm gr}_{\overline \nu}(R_1^s)$.

Suppose $u\in S_1$ and $\sigma({\rm in}_{\overline \nu}(u))={\rm in }_{\overline \nu}(u)$ for all $\sigma\in G$. Then there exist $h_{\sigma}\in S_1^s$ such that $\sigma(u)=u+h_{\sigma}$ with $\overline \nu(h_{\sigma})>\overline \nu(u)$ for all $\sigma\in G$, and so there exist $f_{\sigma}\in m_{(\nu^*)^s}$ such that $\sigma(u)=(1+f_{\sigma})u$ for all $\sigma\in G$. Let
$$
y={\rm Tr}_{M/L}(u)=(e+h)u
$$
with $h\in m_{(\nu^*)^s}$.  We have that $y$ is an element of $L=K^s$, and $e+h$ is a unit in $V_{\overline \nu}$ (since $k'$ has characteristic zero). Thus $\overline \nu(u)=\overline \nu(y)\in \Phi_{\nu}$. From the expression
$$
u=\sum g_{\sigma}(\tau_{\sigma}y^{\sigma})
$$
of (\ref{eqA40}), we have that
${\rm in}_{\overline \nu}(u)={\rm in}_{\overline \nu}(g_0)\in {\rm gr}_{\overline \nu}(R_1^s)$, completing the proof of Proposition \ref{PropAJV2},
since ${\rm gr}_{\overline\nu}(S_1^s)\cong{\rm gr}_{\nu^*}(S_1)$ and ${\rm gr}_{\overline\nu}(R_1^s)\cong {\rm gr}_{\nu}(R^s)$ by equation (\ref{eqA23}), as $R_1$ dominates $R_3$.

\end{document}